\documentclass[11pt,a4paper,twoside]{report}

\usepackage{tabularx}
\usepackage{amssymb}	
\usepackage{amsthm}		
\usepackage{amsmath}	
\usepackage{graphicx}	
\usepackage{multicol} 
\usepackage{enumerate} 
\usepackage{enumitem}
\usepackage{chronology}
\usepackage{graphicx}	

\usepackage[margin=35mm]{geometry}		
\addtolength{\topmargin}{7mm}	
\addtolength{\textheight}{-23mm} 
\setlength{\parskip}{1mm}		

\newenvironment{my_enumerate}{
\begin{enumerate}
	\vspace{-1mm}
  \setlength{\itemsep}{3pt}
}{\end{enumerate}
}

\renewcommand{\geq}{\geqslant}		
\renewcommand{\leq}{\leqslant}		
\renewcommand{\mod}{\text{ mod }}	
\newcommand{\fq}{$\mathbb{F}_{q}$}	
\newcommand{\mfq}{\mathbb{F}_{q}}	
\newcommand{\fqs}{$\mathbb{F}_{q}$\space}		
\newcommand{\fqx}{$\mathbb{F}_{q}[x]$}		
\newcommand{\mfqx}{\mathbb{F}_{q}[x]}		
\newcommand{\PP}{{\bf PP}}	

\newtheorem{theorem}{Theorem}[chapter]			
\newtheorem{corollary}[theorem]{Corollary}		
\newtheorem{lemma}[theorem]{Lemma}			
\newtheorem{prop}[theorem]{Proposition}			
\newtheorem{conj}[theorem]{Conjecture}			

\theoremstyle{definition}		
\newtheorem{newdef}{Definition}[chapter]
\newtheorem{example}{Example}[chapter]
\newtheorem{remark}{Remark}[chapter]

\def\eref#1{$(\ref{#1})$}				
\def\lref#1{Lemma~$\ref{#1}$}		
\def\dref#1{Definition~$\ref{#1}$}		
\def\tref#1{Theorem~$\ref{#1}$}		
\def\cref#1{Corollary~$\ref{#1}$}		
\def\chref#1{Chapter~$\ref{#1}$}	
\def\pref#1{Proposition~$\ref{#1}$}

\numberwithin{equation}{chapter} 		
\numberwithin{figure}{chapter}		

\setcounter{page}{3}

\begin{document}

\begin{titlepage}
\begin{center}
\textsc{\huge Monash University}\\[1.5cm]
{ \Huge \bfseries Permutation Polynomials of Finite Fields}\\[1.5cm]
\textsc{\LARGE Honours Project}\\[1.2cm]


\vfill

\begin{minipage}{0.4\textwidth}
\begin{flushleft} \Large
\emph{Author:}\\
Christopher J. Shallue
\end{flushleft}
\end{minipage}
\begin{minipage}{0.4\textwidth}
\begin{flushright} \Large
\emph{Supervisor:} \\
A/Prof. Ian M. Wanless
\end{flushright}
\end{minipage}
\\[2cm]

\textsc{\LARGE May 2012}

\end{center}
\end{titlepage}

$\mbox{ \space}$\\[4cm]
\begin{centering}
{\bf Abstract}\\
\end{centering}
Let \fq\ be the finite field of $q$ elements. Then a \emph{permutation polynomial} (\PP) of \fq\ is a polynomial $f \in \mfqx$ such that the associated function $c \mapsto f(c)$ is a permutation of the elements of \fq. In 1897 Dickson gave what he claimed to be a complete list of \PP s of degree at most 6, however there have been suggestions recently that this classification might be incomplete. Unfortunately, Dickson's claim of a full characterisation is not easily verified because his published proof is difficult to follow. This is mainly due to antiquated terminology. In this project we present a full reconstruction of the classification of degree 6 \PP s, which combined with a recent paper by Li \emph{et al.} finally puts to rest the characterisation problem of \PP s of degree up to 6.

In addition, we give a survey of the major results on \PP s since Dickson's 1897 paper. Particular emphasis is placed on the proof of the so-called \emph{Carlitz Conjecture}, which states that if $q$ is odd and `large' and $n$ is even then there are no \PP s of degree $n$. This important result was resolved in the affirmative by research spanning three decades. A generalisation of Carlitz's conjecture due to Mullen proposes that if $q$ is odd and `large' and $n$ is even then no polynomial of degree $n$ is `close' to being a \PP. This has remained an unresolved problem in published literature. We provide a counterexample to Mullen's conjecture, and also point out how recent results imply a more general version of this statement (provided one increases what is meant by $q$ being `large').
\vfill

\tableofcontents

\chapter{Permutation Polynomials of Finite Fields}
This chapter is devoted to a preliminary exploration of permutation polynomials and a survey of fundamental results. Most of the ideas, results and proofs presented are based on published works of more than century's worth of academic interest in this area. In particular, the reader may find many of the theorems and proofs from this chapter in the excellent treatise on finite fields by Lidl and Neiderreiter \cite[Ch.~7]{lidl}. Some of the omitted proofs can also be found there. We would like to thank A. B. Evans for providing us with a preprint of his book \cite{evans2012}, from which we have used the formula \eref{eq:transposition} and the proof of \tref{thm:quadratic_binomial}. Other published works have been referenced where necessary.

\section{Functions as Polynomials}
Let $q=p^r$, where $p$ is a prime and $r\geq 1$ is an integer. In this project we are interested in functions from the finite field \fq\ into itself, namely functions of the form
\[ \Phi: \mfq \longrightarrow \mfq. \]
To study such functions it is enough to study polynomials of degree at most $q-1$, as the next lemma shows. This result was proved by Leonard Eugene Dickson in 1897 \cite{dickson1897II}; for $q$ prime it was already noted by Hermite \cite{hermite1854}.
\begin{lemma}\label{lem:unique_poly}
For any function $\Phi : \mfq \rightarrow \mfq$ there exists a unique polynomial $f \in\mfqx$ of degree at most $q-1$ such that the associated polynomial function $f: c \mapsto f(c)$ satisfies $\Phi(c) = f(c)$ for all $c \in \mfq.$
\end{lemma}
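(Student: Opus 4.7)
The plan is to prove uniqueness first (it is the easier half and isolates what the degree bound is really doing), then to produce an explicit polynomial witnessing existence via Lagrange interpolation. A clean alternative for existence is a counting/pigeonhole argument, which I will mention as a shortcut.

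For \emph{uniqueness}, suppose $f,g \in \mfqx$ both have degree at most $q-1$ and satisfy $f(c) = g(c) = \Phi(c)$ for every $c \in \mfq$. Then $h := f - g$ lies in $\mfqx$, has degree at most $q-1$, and vanishes at all $q$ elements of $\mfq$. Since a nonzero polynomial over a field has at most $\deg h$ roots, $h$ must be the zero polynomial, so $f = g$. This step is completely routine and I expect no obstacle.

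For \emph{existence}, I would write down a Lagrange-type interpolant directly. For each $a \in \mfq$ set
\[
L_a(x) = \prod_{\substack{b \in \mfq \\ b \neq a}} \frac{x - b}{a - b},
\]
which is well-defined because $a - b \neq 0$ in the field, has degree exactly $q-1$, and satisfies $L_a(a) = 1$ while $L_a(c) = 0$ for $c \neq a$ (since one of the numerator factors vanishes). Then
\[
f(x) = \sum_{a \in \mfq} \Phi(a)\, L_a(x)
\]
is a polynomial in $\mfqx$ of degree at most $q-1$, and evaluating at any $c \in \mfq$ gives $f(c) = \Phi(c)$, as required. The identity $\prod_{b \in \mfq}(x-b) = x^q - x$ can be used to tidy the expression if desired, but is not needed for the proof.

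As an alternative existence argument, one can observe that the evaluation map sending a polynomial of degree at most $q-1$ to its tuple of values on $\mfq$ is a linear map between two $\mfq$-vector spaces of the same dimension $q$; uniqueness says its kernel is trivial, so it is also surjective, giving existence for free. I expect no significant obstacle in either approach; the only point worth stating carefully is that the ring $\mfqx$ is a domain so that the root-counting bound used in the uniqueness step is valid.
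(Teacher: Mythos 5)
Your proof is correct and follows essentially the same route as the paper: an explicit interpolating polynomial for existence and a root-counting argument for uniqueness. The only cosmetic difference is that you use the Lagrange basis polynomials $L_a(x)$ where the paper uses the Carlitz form $1-(x-a)^{q-1}$; since both have degree $q-1$ and induce the indicator function of $\{a\}$ on $\mfq$, they are in fact the same polynomial.
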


\begin{proof}
The following formula (\emph{Carlitz Interpolation Formula}) gives a suitable polynomial:
\begin{equation}\label{eq:carlitz}
 f(x) = \sum_{c \in \mfq} \Phi(c) \left( 1-(x-c)^{q-1} \right).
\end{equation}
To show uniqueness, suppose that $f,g \in \mfqx$ are polynomials of degree $\leq q-1$ satisfying $f(c)=g(c)$ for all $c \in \mfq$. If $f \neq g$ then it follows that their difference $f-g$ is a nonzero polynomial that vanishes at all $q$ elements of \fq. But $\deg(f-g) \leq q-1$, so $f-g$ can have at most $q-1$ roots in \fq, a contradiction.
\end{proof}
Note that this lemma establishes a one-to-one correspondence between functions $\Phi : \mfq \to \mfq$ and polynomials $f \in\mfqx$ of degree $\leq q-1$; for there are $q^q$ possible functions each represented uniquely by one of $q^q$ polynomials.

Suppose that $g \in \mfqx$ is a polynomial with degree exceeding $q-1$. Using \eref{eq:carlitz} we can find the unique polynomial $f$ of degree $\leq q-1$ that induces the same function on the underlying field. The following lemma shows we can also find $f$  by reduction modulo $x^q-x$.
\begin{lemma}\label{lem:reduction}
For any $f,g \in \mfqx$ we have $f(c)=g(c)$ for all $c\in \mfq$ if and only if $f(x) \equiv g(x) \mod{(x^q-x)}$.
\end{lemma}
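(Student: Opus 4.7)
The plan is to exploit the fundamental identity $x^q - x = \prod_{c \in \mfq}(x-c)$, which holds because every element of $\mfq$ satisfies $c^q = c$, giving $q$ distinct roots of the monic degree-$q$ polynomial $x^q - x$. Both directions of the biconditional will fall out of this factorisation once combined with Euclidean division in $\mfqx$.

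For the $(\Leftarrow)$ direction I would proceed directly: if $f(x) \equiv g(x) \pmod{x^q - x}$ then $f(x) - g(x) = h(x)(x^q - x)$ for some $h \in \mfqx$, and evaluating at any $c \in \mfq$ gives $f(c) - g(c) = h(c)(c^q - c) = 0$, since $c^q = c$.

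For the $(\Rightarrow)$ direction I would use polynomial division to write
\[ f(x) - g(x) = h(x)(x^q - x) + r(x), \qquad \deg r \leq q-1. \]
By the already-proved direction, $h(x)(x^q - x)$ vanishes at every $c \in \mfq$, so the hypothesis $f(c) = g(c)$ forces $r(c) = 0$ for all $c \in \mfq$. Now $r$ is a polynomial of degree at most $q-1$ with $q$ distinct roots, so $r$ must be the zero polynomial, giving $f(x) \equiv g(x) \pmod{x^q - x}$. (Alternatively, one could apply the uniqueness clause of \lref{lem:unique_poly} to the two degree-$\leq q-1$ reductions of $f$ and $g$ modulo $x^q - x$, but the direct argument via the root count is cleaner.)

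No step should be a genuine obstacle: the only subtlety worth flagging is ensuring that the hypothesis is applied \emph{after} the division step rather than before, so that the residue $r$ — not the original difference $f - g$, which may have degree exceeding $q-1$ — is the polynomial to which the degree/root-count bound is applied.
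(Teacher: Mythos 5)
Your proof is correct and follows essentially the same route as the paper: Euclidean division by $x^q-x$, the identity $c^q=c$ for all $c\in\mfq$, and the observation that the remainder $r$ has degree less than $q$ but $q$ roots, hence is zero. The paper merely handles both directions simultaneously in a single ``if and only if'' chain rather than splitting them as you do.
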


\begin{proof}
By the division algorithm we can write
\[ f(x)-g(x)=h(x)(x^q-x)+r(x), \text{ where } \deg(r) <q. \]
Then $f(c)-g(c)=r(c)$ for all $c \in \mfq$, so $f(c)=g(c)$ for all $c \in \mfq$ if and only if $r$ vanishes at every element of \fq. Since $\deg(r) <q$ this is equivalent to $r(x)=0$.
\end{proof}

\section{Permutation Polynomials}
More specifically, the objects of interest in this project are functions $f : \mfq \rightarrow \mfq$ that permute the elements of \fq. That is, we are interested in bijections of \fq. By \lref{lem:unique_poly} we may assume that such a function is a polynomial of degree at most $q-1$.

\begin{newdef}\label{def:pp}
A polynomial $f \in \mfqx$ is called a {\bf Permutation Polynomial} (\PP) of \fq\ if the associated polynomial function $f: c \rightarrow f(c)$ is a permutation of \fq.
\end{newdef}
By the finiteness of \fq\ we can express this definition in several equivalent ways.
\begin{lemma}\label{lem:alt_defns}
The polynomial $f \in \mfqx$ is a permutation polynomial of \fq\ if and only if one of the following conditions holds:
\begin{my_enumerate}
\item the function $f: c \mapsto f(c)$ is one-to-one;
\item the function $f: c \mapsto f(c)$  is onto;
\item $f(x)=a$ has a solution in \fq\ for each $a \in \mfq$;
\item $f(x)=a$ has a unique solution in \fq\ for each $a \in \mfq$.
\end{my_enumerate}
\end{lemma}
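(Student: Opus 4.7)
The plan is to show the circular chain of equivalences by leveraging the fact that $\mfq$ is a finite set and $f$ induces a map from $\mfq$ to itself. By \dref{def:pp}, $f$ being a \PP\ means that $c \mapsto f(c)$ is a permutation of $\mfq$, i.e., it is simultaneously one-to-one and onto. The core observation driving the proof is the standard pigeonhole argument: a function from a finite set to itself is injective if and only if it is surjective if and only if it is bijective.

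First I would establish the equivalence of (1) and (2) directly. If $f$ is one-to-one on $\mfq$, then the image $f(\mfq)$ contains $|\mfq|=q$ distinct elements and sits inside $\mfq$, forcing $f(\mfq)=\mfq$ and hence surjectivity. Conversely, if $f$ is onto, then no two inputs can share an output, else $|f(\mfq)|<|\mfq|$ would contradict surjectivity; so $f$ is one-to-one. Combining these, (1) $\Leftrightarrow$ (2), and either one individually is equivalent to bijectivity, i.e., to $f$ being a \PP.

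Next, I would observe that condition (3) is a verbatim restatement of surjectivity: the equation $f(x)=a$ has a solution in $\mfq$ precisely when $a$ lies in the image of $f$, so (3) asserts $f(\mfq)=\mfq$, which is (2). Finally, condition (4) splits into existence and uniqueness of a solution to $f(x)=a$ for each $a$; existence is exactly surjectivity (condition (3)), while uniqueness says no two distinct inputs map to the same output, i.e., injectivity (condition (1)). Thus (4) is equivalent to the conjunction of (1) and (2), which we have already shown is equivalent to $f$ being a \PP.

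There is no genuine obstacle here, since the entire argument reduces to the pigeonhole principle on the finite set $\mfq$ together with straightforward unpacking of what ``solution'' and ``unique solution'' mean. The only care needed is to present the implications in an order (e.g.\ $\mathrm{PP} \Rightarrow (1) \Rightarrow (2) \Rightarrow (3) \Rightarrow (4) \Rightarrow \mathrm{PP}$, or a similar cycle) so that no step is left unjustified.
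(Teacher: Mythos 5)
Your proof is correct and follows exactly the reasoning the paper intends: the lemma is stated there without proof, justified only by the remark that the equivalences follow ``by the finiteness of \fq'', which is precisely the pigeonhole argument you spell out. Nothing is missing; your unpacking of (3) as surjectivity and (4) as bijectivity is the standard and intended route.
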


\begin{example}\label{eg:pp_eg1}
Consider the polynomial 
\begin{align*}
f(x) &= 3 x^9 + 7 x^8 +4 x^7 + 9 x^6 + 8 x^5 + 6 x^4 + 2 x^3 + 5 x^2 +x+1  \\
	&= 3 (x+9) (x^4 +5x+8) (x^4+ 8 x^3 + 10 x^2+ 7 x +8) \in \mathbb{F}_{11}[x].
\end{align*}
By computing its values on the set $\{ 0,1,...,10 \} = \mathbb{F}_{11}$ we have
\[
 \begin{array}{c|c|c|c|c|c|c|c|c|c|c|c}
  x &0&1&2&3&4&5&6&7&8&9&10 \\
  \hline
	f(x)&1&2&0&3&4&5&6&7&8&9&10
 \end{array}.
\]
Since $f(x)$ is a bijection it is a permutation polynomial of $\mathbb{F}_{11}$, and we observe that it represents the 3-cycle $(0,1,2)$. \qed
\end{example}

\begin{example}\label{eg:pp_eg2}
Consider the polynomial 
\[ g(x)= x^3+1 \in \mathbb{F}_{11}[x]. \]
As in the previous example we check whether $g$ is a \PP\ of $\mathbb{F}_{11}$ by computing its values on $\mathbb{F}_{11}$. We get
\[
 \begin{array}{c|c|c|c|c|c|c|c|c|c|c|c}
  x &0&1&2&3&4&5&6&7&8&9&10 \\
  \hline
	g(x)&1&2&9&6&10&5&8&3&7&4&0
 \end{array}.
\]
We see that $g$ is a \PP\ of $\mathbb{F}_{11}$ with cycle structure $(0,1,2,9,4,10)(3,6,8,7)$. \qed
\end{example}

\begin{example}\label{eg:pp_eg3}
Finally, consider the polynomial 
\[ h(x)= x^2 +3x+5 \in \mathbb{F}_{11}[x], \]
which takes the values
\[
 \begin{array}{c|c|c|c|c|c|c|c|c|c|c|c}
  x &0&1&2&3&4&5&6&7&8&9&10 \\
  \hline
	h(x)&5&9&4&1&0&1&4&9&5&3&3
 \end{array}.
\]
We see that $h(x)$ is \emph{not} a \PP\ of $\mathbb{F}_{11}$. This is also clear if we write $h$ in the form 
\[ h(x)= (x+7)^2, \]
and observe that since $x^2$ is not an onto function neither is any function composed with $x^2$. \qed
\end{example}

\begin{remark}
Examples \ref{eg:pp_eg1} and \ref{eg:pp_eg2} demonstrate a noteworthy fact on the relationship between permutations and their associated polynomials: simplicity of cycle structure does not imply simplicity as a polynomial, and vice versa. In fact, let $a,b \in \mfq$ and consider the transposition $(a,b)$; the permutation with simplest nontrivial cycle structure. By \eref{eq:carlitz} we determine that the \PP\ representing $(a,b)$ is given by
\begin{equation}\label{eq:transposition}
f(x)=x+(b-a)(1-(x-a)^{q-1})+(a-b)(1-(x-b)^{q-1}).
\end{equation}
Clearly, this is a more complex structure than its cycle form. 

In fact, it is true in general that permutations with simple cycle structure tend to have complex polynomial structure. The interested reader may refer to \cite{wells1969}, which shows that most permutations that move very few elements have maximum possible degree. For example, all transpositions and almost all 3-cycles have maximal degree.
\end{remark}

\section{Criteria for Permutation Polynomials}
Given a polynomial $f\in \mfqx$ it is natural to ask: is $f(x)$ a \PP\ of \fq? For an arbitrary polynomial $f$ this is a difficult question to answer. A straightforward approach (as in Examples \ref{eg:pp_eg1} - \ref{eg:pp_eg3}) is to evaluate $f(c)$ for each $c \in \mfq$, and determine by examination whether or not $f$ is a bijection. If $q$ and $\deg(f)$ are small this is plausible, however in general it is computationally impractical. Although there do exist other techniques, all currently known criteria for \PP s are complicated by way of requiring long calculations. There are no methods that allow an arbitrary polynomial to be checked by inspection, for example. 

In this section we aim to give a fairly comprehensive survey of all known criteria for \PP s. First we give considerable attention to a classical result known as Hermite's criterion. This theorem was first given by Hermite for fields of prime order \cite{hermite1854}, and was later generalised by Dickson to general finite fields \cite{dickson1897II}. We will use this theorem extensively in \chref{chap:deg6}.

\subsection{Hermite's Criterion}
Permutation polynomials of \fq\ may be characterised as polynomial functions $f \in \mfqx$ satisfying the property
\[ \{ f(c) : c \in \mfq \} = \mfq. \]
Hence, it is useful to have a characterisation of sequences $a_0,a_1,...,a_{q-1}$ of elements of \fq\ that satisfy $\{ a_0,a_1,...,a_{q-1} \} = \mfq.$ Note that the set $\{ f(c) : c \in \mfq \}$ is known as the \emph{value set of f}, denoted $V_f$, and will be discussed further in \chref{chap:carlitz}. 

For the following lemma we must first recall the formula for the sum of the first $n$ terms of a geometric series. Let $F$ be a field and let $a \in F$, $a \neq 1$. Then the following identity holds
\begin{equation}\label{eq:geometric}
\sum_{i=0}^{n-1} a^i = \frac{(1-a^n)}{1-a}.
\end{equation}

\begin{lemma}\label{lem:before_hermite}
The sequence $ a_0,...,a_{q-1}$ of elements of \fq\ satisfies $\{ a_0,...,a_{q-1} \} = \mfq$ if and only if
\[ 
\sum_{i=0}^{q-1} a_i^t = 
\begin{cases}
0 &\text{for } t=0,1,...,q-2,\\
-1 &\text{for } t=q-1.
\end{cases}
\]
\end{lemma}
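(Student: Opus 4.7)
The plan is to handle the two implications separately, leveraging the formulas already established in the excerpt.

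For the forward direction, assuming $\{a_0,\ldots,a_{q-1}\} = \mfq$, the sum $\sum_i a_i^t$ equals $\sum_{c \in \mfq} c^t$, so I only need to evaluate the power sums of \fq\ itself. The case $t = 0$ gives $q \cdot 1 = 0$ in \fq, and the case $t = q-1$ follows from Fermat's little theorem: every nonzero $c$ contributes $c^{q-1} = 1$, while $0^{q-1} = 0$, so the sum is $q - 1 = -1$. For $1 \leq t \leq q-2$ I would fix a primitive element $g$ of $\mfq^{*}$ and reinterpret the sum as the geometric series $\sum_{j=0}^{q-2}(g^t)^j$; since $g$ has order $q-1$ and $1 \leq t \leq q-2$ we have $g^t \neq 1$, so formula \eref{eq:geometric} gives $(1 - g^{t(q-1)})/(1 - g^t) = 0$.

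For the converse, my approach mirrors the indicator idea underlying the Carlitz formula \eref{eq:carlitz}. For each $c \in \mfq$ define $n_c = |\{i : a_i = c\}|$; the desired conclusion is equivalent to $n_c = 1$ for every $c$. By Fermat's little theorem, the expression $1 - (c - a_i)^{q-1}$ takes the value $1$ when $a_i = c$ and $0$ otherwise, so summing yields
\[ n_c \equiv \sum_{i=0}^{q-1}\bigl(1 - (c - a_i)^{q-1}\bigr) \pmod{p}, \]
where $p$ denotes the characteristic of \fq.

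The key computation is to evaluate this right-hand side using the hypothesised power sum identities. Expanding $(c - a_i)^{q-1}$ by the binomial theorem and interchanging the order of summation turns the inner sum into a linear combination of $s_t := \sum_i a_i^t$, weighted by binomial coefficients and powers of $c$. By hypothesis only the $t = q-1$ term survives (namely the $k = 0$ term of the binomial expansion), leaving $\sum_i (c - a_i)^{q-1} = (-1)^{q-1} s_{q-1} = (-1)^q = -1$ in \fq\ (using $(-1)^q = -1$ in any field of characteristic $p$, whether $p = 2$ or $p$ is odd). Hence $\sum_i(1 - (c - a_i)^{q-1}) = 1$ in \fq, so $n_c \equiv 1 \pmod p$ for every $c$.

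To finish, since $n_c \geq 0$ and $n_c \equiv 1 \pmod p$ I have $n_c \geq 1$ for every $c \in \mfq$; but $\sum_c n_c = q$ and there are exactly $q$ elements of \fq, forcing $n_c = 1$ throughout. I expect the main obstacle to be this last step: the power-sum hypotheses only pin $n_c$ down modulo $p$, so both the nonnegativity of $n_c$ and the total count $\sum_c n_c = q$ are essential for upgrading a congruence in \fq\ to an integer equality.
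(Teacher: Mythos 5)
Your proof is correct, but it is structured quite differently from the paper's. The paper proves both directions at once: for each $i$ it introduces the auxiliary polynomial $g_i(x)=1-\sum_{t=0}^{q-1}a_i^tx^{q-1-t}$ (the degree-$\leq q-1$ representative of the indicator of $a_i$), sums to get $g(x)=-\sum_t\bigl(\sum_i a_i^t\bigr)x^{q-1-t}$, observes that $g$ is the constant function $1$ on \fq\ exactly when $\{a_0,\dots,a_{q-1}\}=\mfq$, and then invokes the uniqueness statement of \lref{lem:unique_poly} to translate ``$g\equiv 1$ as a function'' into ``$g=1$ as a polynomial,'' which is precisely the displayed power-sum conditions. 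Your argument instead splits the equivalence: the forward direction is a direct evaluation of the power sums of \fq\ via a primitive element and the geometric series \eref{eq:geometric} (this is essentially the same computation the paper hides inside $g_i(b)=0$), while your converse uses the indicator $1-(c-a_i)^{q-1}$, a binomial expansion in which only the $k=0$ term survives, and the multiplicity-counting step $n_c\equiv 1 \pmod p$, $n_c\geq 0$, $\sum_c n_c=q$ to force $n_c=1$. That last step is indeed essential and you handle it correctly, as you do the characteristic-$2$ case of $(-1)^q=-1$. What the paper's route buys is economy --- one polynomial identity settles both implications simultaneously and no integer counting is needed; what your route buys is transparency --- each direction is an elementary, self-contained computation, and the converse makes explicit exactly where the hypothesis for $t=q-1$ versus $t\leq q-2$ enters. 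Both are complete proofs.
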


\begin{proof}
For each $0 \leq i \leq q-1$ consider the polynomial
\[ g_i(x)=1-\sum_{t=0}^{q-1} a_i^{t} x^{q-1-t}. \]
It is clear that $g_i(a_i)=1$ for all $0 \leq i \leq q-1$. Note that we also have $g_i(b)=0$ for all $b \in \mfq, b \neq a_i$. To show this, suppose that $b \neq 0$. Then by \eref{eq:geometric} we have
\[ g_i(b)=1-\sum_{t=0}^{q-1} a_i^{t} b^{q-1-t} =1-\sum_{t=0}^{q-1} (a_ib^{-1})^t = 1 - \frac{1- (a_i b^{-1})^q}{1- (a_ib^{-1})} = 1- 1 =0.   \]
Moreover it is clear that $g_i(0)=0$ whenever $a_i \neq 0$. Hence the polynomial
\begin{equation}\label{eq:g_proving_hermite}
g(x) = \sum_{i=0}^{q-1} g_i(x) = - \sum_{i=0}^{q-1} \left( \sum_{t=0}^{q-1} a_i^{t} x^{q-1-t} \right)=  - \sum_{t=0}^{q-1} \left( \sum_{i=0}^{q-1} a_i^{t} \right) x^{q-1-t}
\end{equation}
satisfies
\[ g(x) = 
\begin{cases}
1 &\text{if } x \in \{ a_0,a_1,...,a_{q-1} \}, \\
0 &\text{if } x \in \mfq \setminus \{ a_0,a_1,...,a_{q-1} \}. \\
\end{cases}
\]
So $g(x)$ maps every element of \fq\ to 1 if and only if $\{ a_0,a_1,...,a_{q-1} \} = \mfq$. But since $\deg(g) \leq q-1$ we have by \lref{lem:unique_poly} that $g$ maps every element to 1 if and only if $g(x)=1$, which by \eref{eq:g_proving_hermite} is equivalent to
\[ 
\sum_{i=0}^{q-1} a_i^t = 
\begin{cases}
0 &\text{for } t=0,1,...,q-2,\\
-1 &\text{for } t=q-1.
\end{cases}
\]
\end{proof}

The following criterion for permutation polynomials is known as Hermite's criterion.
\begin{theorem}{\emph{\bf (Hermite's criterion.)}}\label{thm:hermite}
Let $q=p^r$, where $p$ is a prime and $r$ is a positive integer. Then a polynomial $f \in \mfqx$ is a \PP\ of \fq\ if and only if the following two conditions hold:
\begin{my_enumerate}
\item the reduction of $f(x)^{q-1} \mod (x^q-x)$ is monic of degree $q-1$;
\item for each integer $t$ with $1 \leq t \leq q-2$ and $t \not\equiv 0 \mod p$, the reduction of $f(x)^t \mod (x^q-x)$ has degree $\leq q-2$.
\end{my_enumerate}
\end{theorem}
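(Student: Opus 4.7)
The plan is to convert Hermite's criterion into a statement about the power sums $S_t := \sum_{c\in\mfq} f(c)^t$ by applying \lref{lem:before_hermite} to the sequence of values of $f$ on \fq. That lemma immediately gives: $f$ is a \PP\ of \fq\ if and only if $S_t = 0$ for $0\leq t\leq q-2$ and $S_{q-1} = -1$. So the whole theorem reduces to analysing $S_t$.

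Next, I would link $S_t$ to the mod-$(x^q-x)$ reduction of $f(x)^t$. Writing that reduction as $\sum_{j=0}^{q-1} b_{t,j}\, x^j$, \lref{lem:reduction} gives pointwise agreement on \fq, so
\[ S_t = \sum_{j=0}^{q-1} b_{t,j} \sum_{c\in\mfq} c^j. \]
A standard finite-field computation gives $\sum_{c\in\mfq} c^j = -1$ when $j = q-1$ and $0$ otherwise in the range $0 \leq j \leq q-1$ (the $j=0$ case using that $q\equiv 0 \pmod{p}$). Thus $S_t = -b_{t,q-1}$. The requirement $S_{q-1} = -1$ becomes $b_{q-1,q-1} = 1$, which is exactly condition (1); the requirements $S_t = 0$ for $1\leq t\leq q-2$ become $b_{t,q-1} = 0$, i.e., the reduction of $f(x)^t$ has degree $\leq q-2$. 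The case $t=0$ is vacuous because $f(x)^0 = 1$ already has degree $0$.

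The remaining subtlety — and the genuinely interesting part of the theorem — is to argue that in condition (2) it suffices to check only those $t$ with $p \nmid t$. Here I would invoke the Frobenius: since $(\alpha + \beta)^p = \alpha^p + \beta^p$ in characteristic $p$, we have $S_{ps} = (S_s)^p$, so $S_s = 0$ forces $S_{ps} = 0$. Writing any $t \in \{1, \ldots, q-2\}$ with $p \mid t$ uniquely as $t = p^k s$ with $p\nmid s$ and $1\leq s < t$, the vanishing of $S_t$ then follows automatically from the vanishing of $S_s$, and so we may restrict attention to $t \not\equiv 0 \pmod{p}$.

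The main obstacle is the bookkeeping around the power sums $\sum_{c\in\mfq} c^j$, particularly the $j=0$ case where one uses that $q$ vanishes in the field; once that is in hand, the reduction to the Frobenius collapse at the end is clean and is exactly what explains the otherwise puzzling side condition $t \not\equiv 0 \pmod{p}$ in the statement.
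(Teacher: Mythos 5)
Your proposal is correct and follows essentially the same route as the paper: reduce to the power sums $\sum_{c\in\mfq} f(c)^t$ via \lref{lem:before_hermite}, identify the coefficient of $x^{q-1}$ in the reduction of $f(x)^t$ with $-\sum_{c\in\mfq} f(c)^t$, and dispose of the exponents divisible by $p$ by the Frobenius identity $\sum_c f(c)^{t'p^j}=\bigl(\sum_c f(c)^{t'}\bigr)^{p^j}$. The only cosmetic difference is that the paper obtains the top-coefficient identity directly from the Carlitz interpolation formula \eref{eq:carlitz}, whereas you derive it from the power sums $\sum_{c\in\mfq}c^j$; both are standard and equivalent.
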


\begin{proof}
For each $1 \leq t \leq q-1$, denote the reduction of $f(x)^t$ modulo $x^q-x$ by
\[ f(x)^t \mod (x^q-x) = \sum_{i=0}^{q-1} b_i^{(t)} x^i. \]
Note that by \eref{eq:carlitz} we have $b_{q-1}^{(t)}=-\sum_{c \in \mfq} f(c)^t$. 

Suppose that $f(x)$ is a \PP\ of \fq. Then since $\{ f(c) : c \in \mfq \} = \mfq$ we have by \lref{lem:before_hermite} that $b_{q-1}^{(t)}=0$ for all $1 \leq t \leq q-2$ and $b_{q-1}^{(q-1)}=1$. 

Now suppose that $(1)$ and $(2)$ are satisfied. Then $(1)$ implies that $-b_{q-1}^{(q-1)}=\sum_{c \in \mfq} f(c)^{q-1} =-1$, whilst $(2)$ implies that $-b_{q-1}^{(t)}=\sum_{c \in \mfq} f(c)^{t} = 0$ for all $1 \leq t \leq q-2$, $t \not\equiv 0 \mod p$. If $t \equiv 0 \mod p$ we may write $t=t' p^j$, where $1 \leq t' \leq q-2$ and $t' \not\equiv 0 \mod p$. We then have
\[ \sum_{c \in \mfq} f(c)^{t} = \sum_{c \in \mfq} f(c)^{t' p^j} = \left( \sum_{c \in \mfq} f(c)^{t'} \right) ^{p^j} =0. \]
So $\sum_{c \in \mfq} f(c)^{t} = 0$ for all $1 \leq t \leq q-2$ and this identity also holds trivially for $t=0$. By \lref{lem:before_hermite}, $f(x)$ is a \PP\ of \fq. 
\end{proof}

In the previous proof it is possible to remove the condition that the reduced polynomial in (1) is monic; it is enough to say that its degree is $q-1$. Alternatively, we can replace condition (1) in \tref{thm:hermite} by other conditions. The following theorem is an equivalent form of Hermite's criterion, and in fact is very close to the original statement proved by Dickson in 1897.

\begin{theorem}\label{thm:hermite2}
Let $q=p^r$, where $p$ is a prime and $r$ is a positive integer. Then a polynomial $f \in \mfqx$ is a \PP\ of \fq\ if and only if the following two conditions hold:
\begin{my_enumerate}
\item $f$ has exactly one root in \fq;
\item for each integer $t$ with $1 \leq t \leq q-2$ and $t \not\equiv 0 \mod p$, the reduction of $f(x)^t \mod (x^q-x)$ has degree $\leq q-2$.
\end{my_enumerate}
\end{theorem}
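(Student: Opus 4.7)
The plan is to deduce this theorem from Hermite's criterion (\tref{thm:hermite}) by showing that, in the presence of condition $(2)$, the new condition $(1)$ is equivalent to the original condition $(1)$, namely that the reduction of $f(x)^{q-1} \mod (x^q-x)$ is monic of degree $q-1$. Since condition $(2)$ is identical in both theorems, this reduction will complete the proof.

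For the forward direction, suppose $f$ is a \PP\ of \fq. Then by \lref{lem:alt_defns} the equation $f(x)=0$ has a unique solution in \fq, giving condition $(1)$; condition $(2)$ is immediate from \tref{thm:hermite}.

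For the reverse direction, I will use the key observation from the proof of \tref{thm:hermite} that if $f(x)^{q-1} \mod (x^q-x) = \sum_{i=0}^{q-1} b_i^{(q-1)} x^i$, then the Carlitz Interpolation Formula (\ref{eq:carlitz}) yields
\[ b_{q-1}^{(q-1)} = -\sum_{c \in \mfq} f(c)^{q-1}. \]
Assuming condition $(1)$, exactly one $c \in \mfq$ satisfies $f(c)=0$, and for the remaining $q-1$ values of $c$ Fermat's little theorem gives $f(c)^{q-1}=1$. Hence
\[ \sum_{c \in \mfq} f(c)^{q-1} = q-1 = -1 \]
in \fq\ (since $\mfq$ has characteristic $p$ and $q=p^r$), so $b_{q-1}^{(q-1)} = 1$. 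Thus the reduction of $f(x)^{q-1} \mod (x^q-x)$ is monic of degree $q-1$, which together with condition $(2)$ shows that $f$ satisfies the hypotheses of \tref{thm:hermite} and is therefore a \PP\ of \fq.

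There is no real obstacle here; the argument is essentially bookkeeping around the leading coefficient of the reduced $(q-1)$-th power. The only subtlety worth flagging is the use of characteristic $p$ in asserting $q-1 = -1$ in \fq, which is precisely what lets counting the non-roots of $f$ translate into the monic-of-degree-$(q-1)$ condition in the original criterion.
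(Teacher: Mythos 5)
Your proof is correct and follows essentially the same route as the paper: both reduce the theorem to Hermite's criterion by relating the number of roots of $f$ to the leading coefficient $b_{q-1}^{(q-1)}=-\sum_{c\in\mfq}f(c)^{q-1}$ of the reduced $(q-1)$-th power. The only cosmetic difference is that the paper proves the biconditional in one stroke by computing $b_{q-1}^{(q-1)}=j$ for an arbitrary number $j$ of roots, whereas you split into two directions, invoking \lref{lem:alt_defns} for one of them; the substance is identical.
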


\begin{proof}
We wish to prove that $f$ has exactly one root in \fq\ if and only if the reduction of $f(x)^{q-1} \mod (x^q-x)$ is monic of degree $q-1$.
As in the proof of \tref{thm:hermite} we write
\[ f(x)^t \mod (x^q-x) = \sum_{i=0}^{q-1} b_i^{(t)} x^i, \]
where $b_{q-1}^{(t)}=-\sum_{c \in \mfq} f(c)^t$. Suppose that $f$ has exactly $j$ roots in \fq. Then 
\[ b_{q-1}^{(q-1)}=-\sum_{c \in \mfq} f(c)^{q-1}=-(q-j)=j, \]
and since $0 \leq j \leq q-1$ we have $b_{q-1}^{(q-1)}=1$ if and only if $j=1$.
\end{proof}

Hermite's criterion gives us some immediate and very useful corollaries. We first show that every reduced \PP\ of \fq\ must have degree $\leq q-2$.

\begin{corollary}\label{cor:max_degree}
If $q>2$ and $f(x)$ is a \PP\ of \fq\ then the reduction of $f$ modulo $x^q-x$ has degree at most $q-2$.
\end{corollary}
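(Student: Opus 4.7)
The plan is to apply Hermite's criterion (Theorem~\ref{thm:hermite}) directly with $t = 1$. Since $f$ is a \PP\ of \fq, condition (2) of the criterion states that for each integer $t$ with $1 \leq t \leq q-2$ and $t \not\equiv 0 \mod p$, the reduction of $f(x)^t \mod (x^q-x)$ has degree $\leq q-2$.

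The hypothesis $q > 2$ is exactly what is needed to guarantee that $t = 1$ lies in the admissible range $1 \leq t \leq q-2$; and of course $1 \not\equiv 0 \mod p$ holds for every prime $p$. Taking $t = 1$ in Hermite's criterion therefore tells us that the reduction of $f(x) \mod (x^q - x)$ itself has degree at most $q-2$, which is precisely the claim.

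I do not expect any real obstacle: the corollary is essentially a one-line consequence of the stated criterion once one checks that $t = 1$ is a legitimate value of the exponent. The only thing worth remarking is why the condition $q > 2$ is necessary, namely to ensure that the interval $[1, q-2]$ is non-empty so that $t = 1$ can be used; for $q = 2$ the unique non-constant reduced \PP\ is $f(x) = x + 1$, which has degree $q - 1 = 1$, showing that the hypothesis $q > 2$ cannot be dropped.
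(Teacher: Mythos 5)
Your proof is correct and follows exactly the paper's argument: the paper's entire proof is ``Set $t=1$ in Theorem~\ref{thm:hermite}'', and you have simply spelled out why $t=1$ is admissible (namely $q>2$ makes the range $1\leq t\leq q-2$ nonempty and $1\not\equiv 0 \bmod p$). Your closing remark on the sharpness of the hypothesis $q>2$ is a nice bonus but not needed.
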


\begin{proof}
Set $t=1$ in \tref{thm:hermite}.
\end{proof}

\begin{corollary}\label{cor:notequiv1}
If $q \equiv 1 \mod n$ then there is no \PP\ of \fq\ of degree $n$.
\end{corollary}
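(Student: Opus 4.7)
The plan is to apply Hermite's criterion (\tref{thm:hermite}) by exhibiting an exponent $t$ for which condition (2) fails. Suppose for contradiction that $f \in \mfqx$ is a \PP\ of \fq\ with $\deg(f) = n$, and that $q \equiv 1 \mod n$. I take $n \geq 2$, since the case $n = 1$ is vacuous (every linear polynomial with nonzero leading coefficient is a \PP, so the corollary is implicitly intended for $n \geq 2$). Write $q - 1 = tn$ for a positive integer $t$; this $t$ will be the candidate exponent witnessing the failure of Hermite's criterion.

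Next I would verify the hypotheses required in condition (2) of \tref{thm:hermite}. The constraint $n \geq 2$ together with $n \mid q-1$ forces $q \geq 3$, and then $t = (q-1)/n \leq (q-1)/2 \leq q - 2$, while $t \geq 1$ holds trivially. Moreover $p \nmid t$, since $t$ divides $q - 1 = p^r - 1$ and the latter is coprime to $p$.

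Now I would simply compute $\deg(f^t) = nt = q - 1$. Because $q - 1 < q$, the polynomial $f^t$ is already its own reduction modulo $x^q - x$, so that reduction has degree exactly $q - 1$. This directly contradicts condition (2) of Hermite's criterion, so no such $f$ can exist.

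The argument is essentially a one-line application of Hermite's criterion once the correct $t$ is chosen, so I do not expect any serious obstacle. The only points requiring a moment of care are confirming the two side conditions $t \leq q - 2$ and $t \not\equiv 0 \mod p$, and these both fall straight out of the factorisation $q - 1 = tn$ together with $q = p^r$.
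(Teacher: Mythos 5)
Your proof is correct and follows essentially the same route as the paper: both arguments take the exponent $t=(q-1)/n$, note that $p\nmid t$ and $1\leq t\leq q-2$, and observe that $\deg(f^t)=q-1$ violates condition (2) of Hermite's criterion. If anything, you are slightly more careful than the paper in excluding $n=1$ and in justifying $t\leq q-2$.
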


\begin{proof}
Let $f(x) \in \mfqx$, where $q =p^r= nm+1$ for some positive integer $m$. By \lref{lem:reduction} we may assume that $n \leq q-1$. Then $1 \leq m \leq q-1$ for all $n \geq 1$, and $m \not\equiv 0 \mod p$ (otherwise $0 \equiv 1 \mod p$). But $\deg(f(x)^{m})=nm=q-1$, so by \tref{thm:hermite} $f(x)$ is not a \PP\ of \fq. 
\end{proof}

\subsection{Survey of Known Criteria}
Recall that a \emph{character} $\chi$ of a finite abelian group $G$ is a homomorphism from $G$ into the multiplicative group $U$ of complex numbers of unit absolute value. The number of characters of $G$ is equal to $|G|$. If \fq\ is a finite field then an \emph{additive character} of \fq\ is a character of the additive group of \fq, that is, a function $\chi : \mfq \to U$ such that 
\[ \chi(x_1 + x_2) = \chi(x_1) \chi(x_2) \text{ for all } x_1,x_2 \in \mfq. \]
The \emph{trivial additive character} $\chi_0$ of \fq\ is defined by $\chi_0 (c)=1$ for all $c \in \mfq$; all other additive characters are considered nontrivial.

The following characterisation of \PP s of \fq\ is well known, see for example \cite{lidl}.
\begin{theorem}
A polynomial $f\in \mfqx$ is a \PP\ of \fq\ if and only if 
\[ \sum_{c \in \mfq} \chi(f(c))=0\]
for all nontrivial additive characters $\chi$ of \fq.
\end{theorem}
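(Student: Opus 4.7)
The plan is to use the orthogonality relations for additive characters of the finite abelian group $(\mathbb{F}_q,+)$. The key fact I would invoke (or quickly derive) is that the number of additive characters of $\mathbb{F}_q$ is exactly $q$, and that for any $a \in \mathbb{F}_q$,
\[
\frac{1}{q} \sum_{\chi} \chi(a) = \begin{cases} 1 & \text{if } a = 0, \\ 0 & \text{if } a \neq 0, \end{cases}
\]
where the sum ranges over all additive characters of $\mathbb{F}_q$. The nontrivial half of this identity follows because if $a \neq 0$ one can exhibit a character $\psi$ with $\psi(a) \neq 1$, and then the substitution $\chi \mapsto \psi\chi$ shows that $\psi(a) \sum_\chi \chi(a) = \sum_\chi \chi(a)$, forcing the sum to vanish.

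For the forward direction, suppose $f$ is a \PP\ of \fq. Then as $c$ runs over \fq, $f(c)$ also runs over \fq, so for any nontrivial additive character $\chi$,
\[
\sum_{c \in \mfq} \chi(f(c)) = \sum_{a \in \mfq} \chi(a) = 0,
\]
where the last equality is the orthogonality statement above applied with the roles of group element and character interchanged (or proved directly by the translation trick $\chi(b)\sum_a \chi(a) = \sum_a \chi(a+b) = \sum_a\chi(a)$ for a suitable $b$ with $\chi(b) \neq 1$).

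For the converse, for each $a \in \mfq$ let $N(a) = |\{c \in \mfq : f(c)=a\}|$. My goal is to show $N(a)=1$ for every $a$, which by \lref{lem:alt_defns} will imply that $f$ is a \PP. Using the indicator form of orthogonality,
\[
N(a) = \sum_{c \in \mfq} \frac{1}{q}\sum_{\chi} \chi(f(c) - a) = \frac{1}{q} \sum_{\chi} \overline{\chi(a)} \sum_{c \in \mfq} \chi(f(c)).
\]
The trivial character $\chi_0$ contributes $\overline{\chi_0(a)} \cdot q = q$; by hypothesis every nontrivial character contributes $0$. Hence $N(a) = q/q = 1$ for all $a \in \mfq$, completing the proof.

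The main obstacle is not any single step but rather making sure the basic character theory (existence of $q$ characters, the orthogonality/duality identity) is either cited cleanly or proved on the spot; once that is in hand, both directions are a one-line manipulation of the double sum. If one wished to avoid appealing to character theory as a black box, the only real work would be in deriving the orthogonality identity above, which for $(\mfq,+)$ is elementary via the translation argument already sketched.
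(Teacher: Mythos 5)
Your proof is correct. The paper itself does not supply a proof of this theorem --- it simply records the statement as well known and points to Lidl and Niederreiter --- and your argument is precisely the standard one found there: the forward direction via the vanishing of $\sum_{a \in \mfq}\chi(a)$ for nontrivial $\chi$, and the converse via the orthogonality identity $\frac{1}{q}\sum_{\chi}\chi(a) = [a=0]$ applied to the counting function $N(a)$, which together with \lref{lem:alt_defns} gives that $f$ is a \PP. The one point worth making explicit if you were to write this out in full is the existence, for each nonzero $a \in \mfq$, of an additive character $\psi$ with $\psi(a) \neq 1$ (equivalently, that the $q$ additive characters separate points); you correctly flag this as the only real content of the orthogonality relation, and for $(\mfq,+)$ it is supplied by the characters $\psi_b(x) = e^{2\pi i \operatorname{Tr}(bx)/p}$.
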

The following characterisation of \PP s dates back to 1883 and is due to Raussnitz. The version given here is from \cite{turnwald1995}, where the reader may also find its proof. The same theorem can also be found in \cite[p.~133]{mullen1991}. We have included a reference to the original paper of Raussnitz \cite{raussnitz1883}, however we remark that we were not able to find a copy.

Recall that the \emph{circulant matrix} with first row $(a_0,...,a_n)$ is defined by
\[ M = 
\begin{pmatrix}
a_0 & a_1 & \cdots & a_{n} \\
a_{n} & a_0 & \cdots & a_{n-1} \\
\vdots &\vdots & \ddots & \vdots \\
a_1 & a_2&\cdots &a_0
\end{pmatrix}.
\]

\begin{theorem}{\emph{\bf (Raussnitz).}}
Consider the polynomial $f(x) = \sum_{i=0}^{q-2} a_i x^i$ and let $M_f$ be the circulant matrix with first row $(a_0,a_1,...,a_{q-2})$. Then $f(x)$ is a \PP\ of \fq\ if and only if the characteristic polynomial of $M_f$ is $(x-a_0)^{q-1}-1$.
\end{theorem}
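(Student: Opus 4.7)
The plan is to identify the characteristic polynomial of $M_f$ explicitly as $\prod_{c \in \mfq^*}(x - f(c))$, and then check that this product coincides with $(x - a_0)^{q-1} - 1$ precisely when $f$ is a \PP.

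First I would observe that the algebra of $(q-1) \times (q-1)$ circulant matrices over \fq\ is isomorphic to the quotient ring $\mfqx / (x^{q-1} - 1)$, with the circulant $M_g$ corresponding to the class of $g(x)$. Since $x^{q-1} - 1 = \prod_{c \in \mfq^*}(x - c)$ splits into distinct linear factors over \fq\ (which uses only $\gcd(q-1,p) = 1$, so that $x^{q-1}-1$ is separable), the Chinese Remainder Theorem gives a ring isomorphism
\[
\mfqx / (x^{q-1}-1) \;\cong\; \prod_{c \in \mfq^*} \mfqx / (x - c) \;\cong\; \mfq^{q-1},
\]
under which $f(x)$ is sent to the tuple $(f(c))_{c \in \mfq^*}$. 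Hence $M_f$ is similar to the diagonal matrix with entries $f(c)$, $c \in \mfq^*$, and therefore
\[
\chi_{M_f}(x) = \prod_{c \in \mfq^*}(x - f(c)).
\]

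Next I would unfold the target polynomial. Noting that $a_0 = f(0)$ and that $y^{q-1} - 1 = \prod_{d \in \mfq^*}(y - d)$ in $\mfqx$, the substitution $y = x - a_0$ yields
\[
(x - a_0)^{q-1} - 1 \;=\; \prod_{d \in \mfq^*} \bigl(x - (a_0 + d)\bigr) \;=\; \prod_{a \in \mfq \setminus \{a_0\}}(x - a),
\]
since the map $d \mapsto a_0 + d$ is a bijection between $\mfq^*$ and $\mfq \setminus \{a_0\}$. Equating this with $\chi_{M_f}(x)$ reduces the desired equivalence to the statement that the multiset $\{ f(c) : c \in \mfq^* \}$ equals $\mfq \setminus \{ f(0) \}$. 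Together with $f(0) = f(0)$, this is exactly the assertion that $f$ is a bijection of \fq, that is, a \PP.

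The main obstacle will be rigorously establishing the eigenvalue formula for circulant matrices in the finite-field setting, since the classical Discrete Fourier Transform diagonalisation over $\mathbb{C}$ needs a primitive $(q-1)$st root of unity of the correct order. The CRT argument above sidesteps that technicality because \fq\ itself supplies $q-1$ distinct roots of $x^{q-1} - 1$. Beyond that, the remainder of the proof is bookkeeping with the factorisation $x^{q-1} - 1 = \prod_{c \in \mfq^*}(x - c)$.
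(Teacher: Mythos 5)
The paper does not actually prove this theorem --- it states it and refers the reader to Turnwald (1995) for the proof --- so there is no in-paper argument to compare against. Your proposal is correct and self-contained, and it is essentially the classical diagonalisation proof. Writing $P$ for the basic cyclic-shift circulant, one has $M_f = f(P)$ and $P^{q-1}=I$, so the circulant algebra is $\mfq[x]/(x^{q-1}-1)$ as you say; since $x^{q-1}-1=\prod_{c\in\mfq^{\times}}(x-c)$ is squarefree and splits over \fq, the minimal polynomial of $P$ has distinct roots, $P$ is diagonalisable with eigenvalues exactly the elements of $\mfq^{\times}$ (each once, by degree count), and hence $M_f=f(P)$ is similar to $\mathrm{diag}(f(c))_{c\in\mfq^{\times}}$, giving $\chi_{M_f}(x)=\prod_{c\in\mfq^{\times}}(x-f(c))$. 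The only place you should firm up is precisely this step: an abstract ring isomorphism of $\mfq[x]/(x^{q-1}-1)$ with $\mfq^{q-1}$ does not by itself yield matrix similarity; you need either the minimal-polynomial argument just given or the explicit Vandermonde eigenbasis $(1,c,c^2,\dots,c^{q-2})^{T}$, $c\in\mfq^{\times}$. The remaining bookkeeping --- $(x-a_0)^{q-1}-1=\prod_{a\in\mfq\setminus\{a_0\}}(x-a)$, unique factorisation forcing the multiset $\{f(c):c\in\mfq^{\times}\}$ to equal $\mfq\setminus\{f(0)\}$, and $f(0)=a_0$ --- is exactly right. An alternative route, closer in spirit to the surrounding material in the paper, is to note that $\mathrm{tr}(M_f^{\,t})=\sum_{c\in\mfq^{\times}}f(c)^t$ and compare power sums via \lref{lem:before_hermite}; your version avoids that and is arguably cleaner.
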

In \cite{gathen1991a} the author derives the following criterion equivalent to the theorem of Raussnitz.  If $f(x) = \sum_{i=0}^n a_i x^i$ and $g(x) = \sum_{i=0}^m b_i x^i$, define the \emph{Sylvester matrix} of $f$ and $g$ by
\[ R(f,g) = 
\begin{pmatrix}
a_n&a_{n-1}& \cdots & a_0 &  & &\\
 & a_n&a_{n-1}& \cdots & a_0 & &\\
&& \ddots &\ddots &  &\ddots & \\
&&&a_n&a_{n-1}& \cdots & a_0 \\
b_m&b_{m-1}& \cdots & b_0 &  &&\\
 & b_m&b_{m-1}& \cdots & b_0 &&\\
&& \ddots &\ddots & &\ddots & \\
&&&b_m&b_{m-1}& \cdots & b_0
\end{pmatrix}
.\]

\begin{theorem}\label{thm:raussnitz2}
Let $f \in \mfqx$ and let
\[ g_f= \det \left ( R(x^q-x,f-y) \right) -(-1)^q (y^q-y) \in \mathbb{F}_q[y]. \]
Then $f(x)$ is a \PP\ of \fq\ if and only if $g_f=0$.
\end{theorem}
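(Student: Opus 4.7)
The plan is to recognise the determinant $\det R(x^q-x,\,f(x)-y)$ as the resultant of $x^q-x$ and $f(x)-y$ with respect to $x$, and then apply the standard product formula for resultants to rewrite it as $\prod_{c\in\mfq}(f(c)-y)$. Once this identification is in hand, the theorem reduces to a clean statement about multisets of values of $f$.

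First I would recall (or quickly justify from the Sylvester construction) that if $p(x) \in \mfq[y][x]$ has degree $n$ in $x$ with leading coefficient $a_n$, and its roots in some extension are $\alpha_1,\ldots,\alpha_n$, then for any $h(x)\in\mfq[y][x]$ we have
\[ \mathrm{Res}_x(p,h) = a_n^{\deg h}\prod_{i=1}^n h(\alpha_i). \]
Applying this with $p(x)=x^q-x$, which is monic and factors over $\mfq$ as $\prod_{c\in\mfq}(x-c)$, and $h(x)=f(x)-y$, I obtain
\[ \det R(x^q-x,\,f-y) \;=\; \prod_{c\in\mfq}\bigl(f(c)-y\bigr), \]
viewed as an element of $\mfq[y]$.

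Next I would compute the comparison polynomial. Since $y^q-y=\prod_{c\in\mfq}(y-c)$, we have
\[ (-1)^q(y^q-y) \;=\; \prod_{c\in\mfq}(c-y), \]
so $g_f=\prod_{c\in\mfq}(f(c)-y)-\prod_{c\in\mfq}(c-y)$. The condition $g_f=0$ therefore says that the two products in $\mfq[y]$ are identical, which by unique factorisation in $\mfq[y]$ is equivalent to the multisets $\{f(c):c\in\mfq\}$ and $\mfq$ being equal. The latter is precisely the statement that $f$ is a \PP\ of \fq, using \lref{lem:alt_defns} (a bijection from $\mfq$ to itself has each value occurring with multiplicity exactly one, matching the multiset $\mfq$).

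The main obstacle is the bookkeeping around the resultant: specifically, pinning down the sign convention implicit in the Sylvester matrix as written, so that $\det R(x^q-x,f-y)$ really equals $\prod_{c\in\mfq}(f(c)-y)$ rather than its negative, and making sure that the correction term $-(-1)^q(y^q-y)$ is tuned to exactly the right sign. Once this is settled, the argument is really just the product formula for resultants combined with the factorisation $x^q-x=\prod_{c\in\mfq}(x-c)$. I would refer to \cite{gathen1991a} for the verification of the sign in the stated form of the Sylvester matrix.
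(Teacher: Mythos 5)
Your argument is correct. Note that the paper itself offers no proof of this theorem: it is quoted as a survey item with the derivation deferred to \cite{gathen1991a}, so there is no in-paper argument to compare against. Your route --- identifying $\det R(x^q-x,\,f-y)$ as the resultant $\mathrm{Res}_x(x^q-x,\,f-y)$, applying the product formula over the integral domain $\mathbb{F}_q[y]$ to get $\prod_{c\in\mfq}(f(c)-y)$ (legitimate here because $x^q-x$ is monic and splits over \fq\ with the $q$ distinct roots being exactly the field elements), and then matching against $(-1)^q(y^q-y)=\prod_{c\in\mfq}(c-y)$ --- is the standard derivation and is sound. The reduction at the end is also right: both products have leading coefficient $(-1)^q$ in $y$, so their equality is equivalent to equality of the monic polynomials $\prod_c\bigl(y-f(c)\bigr)$ and $\prod_c(y-c)$, hence by unique factorisation to equality of the multisets $\{f(c):c\in\mfq\}$ and $\mfq$, which holds precisely when $f$ permutes \fq. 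The only loose end is the one you flag yourself, namely that $\det R(p,h)$ with the row convention displayed in the paper equals $\mathrm{Res}_x(p,h)$ with the normalisation $a_n^{\deg h}\prod_i h(\alpha_i)$ rather than a signed variant; since $x^q-x$ is monic this is the usual convention and the signs work out, but it deserves the explicit check you propose (or the citation). A trivial remark: the statement implicitly assumes $f$ is nonconstant so that $f-y$ has positive degree in $x$ and the Sylvester matrix is the expected $(q+\deg f)\times(q+\deg f)$ array; constant $f$ is never a \PP\ for $q>1$ anyway.
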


By studying elementary symmetric polynomials, Turnwald \cite[Theorem 2.13]{turnwald1995} proves a theorem giving no less than nine characterisations of \PP s. Let $f \in \mfqx$ be a polynomial of degree $n$ such that $1 \leq n <q$ and let $s_k$ be the $k^{th}$ elementary symmetric polynomial of the values $f(c)$, that is, 
\begin{equation}
\prod_{c \in \mfq} (x-f(c)) = \sum^q_{k=0} (-1)^k s_k x^{q-k}.
\end{equation}
Let $u$ be the smallest positive integer $k$ such that $s_k=0$ and let $w$ be the smallest positive integer $k$ such that $p_k=\sum_{c \in \mfq} f(c)^k \neq 0$. Let $v$ be the number of distinct values of $f$. In \cite{turnwald1995} the author studies the relationships between the values $u,v,w,n$ and $q$, in particular deriving the following characterisations of the statement $v=q$ (i.e. $f$ is a \PP ).

\begin{theorem}\label{thm:9criteria}
Let $f \in \mfqx$ be a polynomial of degree $n$ with $1 \leq n <q$ and let $u,w,v$ be as defined above. Then the following statements are equivalent:
\vspace{-2mm}
\begin{multicols}{2}
\begin{my_enumerate}
\item $f(x)$ is a \PP.
\item $u=q-1$.
\item $u > q-q/n$.
\item $u>q-v$.
\item $v > q- (q-1)/n$.
\item $w=q-1$.
\item $2q/3 -1 < w < \infty$.
\item $q-(q+1)/n < w < \infty$.
\item $q-u \leq w < \infty$.
\item $u > (q-1)/2$ and $w < \infty$.
\end{my_enumerate}
\end{multicols}
\end{theorem}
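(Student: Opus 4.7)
The plan is to organise the ten conditions into a ``diamond'' around condition (1). The main tools are the generating polynomial $g(x)=\prod_{c\in\mfq}(x-f(c))=\sum_{k=0}^{q}(-1)^k s_k x^{q-k}$, Newton's identities linking the $s_k$'s and the $p_k$'s, the Frobenius relation $\alpha^q=\alpha$ on $\mfq$, and the fact that $\gcd(q-1,p)=1$, which endows $\mfq\setminus\{0\}$ with a nondegenerate character theory valued in $\mfq$ itself.

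First I would verify (1) $\Rightarrow$ (2)--(10). If $f$ is a \PP\ then $\{f(c):c\in\mfq\}=\mfq$, so $g(x)=x^q-x$, giving $s_k=0$ for $1\le k\le q-2$ and $s_{q-1}\ne 0$; by \lref{lem:before_hermite} also $p_k=0$ for $1\le k\le q-2$ and $p_{q-1}=-1\ne 0$. Thus $u=w=q-1$ and $v=q$, and each of (3)--(5) and (7)--(10) becomes a trivial substitution using $1\le n<q$.

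The real work is the converses, and I would start with (2) $\Rightarrow$ (1) and (6) $\Rightarrow$ (1). For (2): if $u=q-1$ then $g(x)=x^q+Ax+B$ with $A=(-1)^{q-1}s_{q-1}\ne 0$. Since $\alpha^q=\alpha$ on $\mfq$, $g$ restricts to the affine map $\alpha\mapsto(1+A)\alpha+B$, which either vanishes identically on $\mfq$ (forcing $g(x)=x^q-x$, hence (1)) or has a unique root $\alpha_0$; in the latter case all $q$ roots of $g$ equal $\alpha_0$, so $g(x)=(x-\alpha_0)^q=x^q-\alpha_0$, contradicting $A\ne 0$. For (6): let $A_\alpha=|\{c\in\mfq:f(c)=\alpha\}|$, so that $p_k\equiv\sum_{\alpha\ne 0}A_\alpha\alpha^k\pmod p$. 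The hypothesis $w=q-1$ says the restriction $A_\bullet|_{\mfq\setminus\{0\}}$ has vanishing Fourier transform against every nontrivial power character; Fourier inversion (valid since $\gcd(q-1,p)=1$) forces $A_\alpha\equiv c\pmod p$ for all $\alpha\ne 0$ with a common constant $c\ne 0$ (the nonvanishing of $p_{q-1}$ supplying $c\ne 0$), and then $\sum_\alpha A_\alpha=q\equiv 0\pmod p$ together with the nonnegativity and integrality of each $A_\alpha$ pins down $A_\alpha=1$ for every $\alpha$.

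For the rest I would build two linked chains, (2) $\Rightarrow$ (3) $\Rightarrow$ (4) $\Rightarrow$ (5) and (6) $\Rightarrow$ (7) $\Rightarrow$ (8) $\Rightarrow$ (9), (10), each link being a one-line inequality manipulation using $u\le q-1$, $v\le q$, $w\le q-1$ and $n<q$. The loop then closes via (5) $\Rightarrow$ (1), which is Wan's theorem, and via (9), (10) $\Rightarrow$ (6), which combines Newton's identities with the inequality $w\ge q-u$ (worth isolating as a preliminary lemma: a block of vanishing $s_k$ forces a block of vanishing $p_k$, even in characteristic $p$, because obstructions $k\equiv 0\pmod p$ in Newton's identities fall only on terms whose companion $s_k$ is already zero). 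I expect the main obstacle to be Wan's theorem itself, which asserts that any non-\PP\ of degree $n$ satisfies $v\le q-\lceil(q-1)/n\rceil$; a proof typically bounds $v$ by controlling the number of $\mfq$-rational roots of a carefully chosen factor of $f(x)-y\in\mfq[x,y]$, and this degree-counting step is the one place where the hypothesis $n<q$ is used in a genuinely nontrivial way.
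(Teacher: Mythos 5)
First, note that the paper does not actually prove this theorem: it is quoted from Turnwald \cite[Theorem 2.13]{turnwald1995}, and the only ingredient the paper establishes itself is Wan's bound (\tref{thm:wan_bound}), i.e.\ essentially the implication $(5)\Rightarrow(1)$. So your proposal can only be judged on its own correctness. The parts you work out in detail are sound: $(1)\Rightarrow$ everything, the affine-map argument for $(2)\Rightarrow(1)$, and the Fourier-inversion (equivalently, Carlitz-interpolation) argument for $(6)\Rightarrow(1)$ are all correct, and you correctly identify Wan's theorem as the content of $(5)\Rightarrow(1)$.

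The genuine gap is in your treatment of the remaining seven conditions. The links in your chains are \emph{not} one-line inequality manipulations, and several are false as direct numerical implications. For $(4)\Rightarrow(5)$ you would need $q-u\geq q-(q-1)/n$, i.e.\ $u\leq (q-1)/n$, whereas in fact $u\geq\lceil(q-1)/n\rceil$ always holds (this is \eref{eq:wan_proof1} in the paper), so $v>q-u$ does not imply $v>q-(q-1)/n$. Likewise $2q/3-1\geq q-(q+1)/n$ forces $n\leq 2$, so $(7)\Rightarrow(8)$ fails as an inequality for $n\geq 3$, and $(8)\Rightarrow(9)$ fails similarly. The conditions are not nested thresholds; each must be shown to force $f$ to be a \PP\ via the contrapositive bounds on $u,v,w$ for non-\PP s (chiefly: $u\geq\lceil(q-1)/n\rceil$ and $v\geq\lceil q/n\rceil$ always, $v\leq q-u$ when $f$ is not a \PP, and Newton-identity relations between $u$ and $w$). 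Worse, your proposed preliminary lemma ``$w\geq q-u$'' is false: for $f(x)=x^2$ over $\mathbb{F}_5$ one computes $\prod_c(x-f(c))=x^5-2x^3+x$ and $p_2=4\neq 0$, so $u=w=2$ while $q-u=3$. What the ``block of vanishing $s_k$ forces a block of vanishing $p_k$'' argument actually yields is $w\geq u$; the statement $q-u\leq w<\infty$ is condition $(9)$ itself, hence equivalent to $f$ being a \PP, and cannot serve as a lemma valid for all $f$. (Separately, you have silently corrected the paper's typo in the definition of $u$, which should read $s_k\neq 0$, not $s_k=0$, as the proof of \tref{thm:wan_bound} confirms.)
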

The remarkable fact that $v > q- (q-1)/n$ implies $v=q$ is a theorem due to Wan, which we will discuss further in \chref{chap:carlitz}.

For completeness of this survey we give a final criterion that has been reported in the literature. According to a statement in \cite[p.~251]{mullen1995}, the following theorem is taken from a preprint of Moreno \emph{et al.}, however it does not seem that the paper in question was published. The reference of this preprint may be found in the bibliography of \cite{mullen1995}.
\begin{theorem}
A polynomial $f \in \mfqx$ is a \PP\ of \fq\ if and only if one of the following conditions holds:
\begin{my_enumerate}
\item $(f(x)-c)^{q-1} \not \equiv 1 \mod (x^q-x)$ for all $c \in \mfq$.
\item $(f(x)-f(c))^{q-1} \equiv (x-c)^{q-1} \mod (x^q-x)$ for all $c \in \mfq$.
\end{my_enumerate}
\end{theorem}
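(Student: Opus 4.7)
The plan is to translate each congruence modulo $x^q-x$ into an equality of functions on \fq\ using \lref{lem:reduction}, and then exploit the fact that for $b \in \mfq$ we have $b^{q-1}=1$ if $b\neq 0$ and $b^{q-1}=0$ if $b=0$; thus $b^{q-1}$ is precisely the indicator function of $b\neq 0$. Each of the two conditions will then reduce to one of the equivalent formulations of being a \PP\ in \lref{lem:alt_defns}.

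For condition (1), I would first apply \lref{lem:reduction} to see that, for a fixed $c \in \mfq$, the congruence $(f(x)-c)^{q-1} \equiv 1 \pmod{x^q-x}$ holds if and only if $(f(a)-c)^{q-1}=1$ for every $a \in \mfq$. By the indicator interpretation this is equivalent to $f(a)\neq c$ for every $a$, i.e.\ $c \notin V_f$. Negating, the condition $(f(x)-c)^{q-1} \not\equiv 1 \pmod{x^q-x}$ is equivalent to $c \in V_f$. Requiring this for every $c \in \mfq$ is therefore equivalent to $V_f=\mfq$, which by \lref{lem:alt_defns}(2) is precisely the statement that $f$ is a \PP.

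For condition (2), I would again apply \lref{lem:reduction} to rewrite the congruence as the pointwise identity $(f(a)-f(c))^{q-1}=(a-c)^{q-1}$ for all $a,c \in \mfq$. Using the indicator interpretation on both sides, this equation says that $f(a)=f(c)$ if and only if $a=c$; in other words, $f$ is one-to-one on \fq. By \lref{lem:alt_defns}(1), this is equivalent to $f$ being a \PP.

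There is no real obstacle here — the whole argument is a dictionary between congruences modulo $x^q-x$ and function identities on \fq, combined with the observation that raising to the power $q-1$ produces an indicator function. The only mild subtlety worth spelling out is the handling of the case $b=0$ in the identity $b^{q-1}=1$ (which fails at zero), but this is exactly what makes the argument work: the zero case encodes the equality $f(a)=c$ in condition (1) and the equality $a=c$ in condition (2).
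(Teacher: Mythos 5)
Your argument is correct: each congruence translates via Lemma~1.2 into a pointwise identity on $\mathbb{F}_q$, and the indicator interpretation of $b\mapsto b^{q-1}$ turns condition (1) into surjectivity and condition (2) into injectivity, either of which characterises a \PP\ by Lemma~1.3. Note that the paper states this theorem without proof (it is quoted from an apparently unpublished preprint of Moreno \emph{et al.}), so there is no proof to compare against; your short argument in fact supplies the missing justification and is the natural one.
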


At the conclusion of this section we remark that all the criteria listed here are computationally demanding, even for polynomials of small degrees over small fields. For this reason, some of the above criteria have been converted into probabilistic algorithms for testing for \PP s. In particular, the reader is referred to \cite{gathen1991a} for probabilistic versions of \tref{thm:hermite2} and \tref{thm:raussnitz2}. See also \cite{gathen1991,shparlinski1992}.

\section{Classes of Permutation Polynomials}
We have seen that in general it is difficult to tell whether or not an arbitrary polynomial is a \PP. However, for certain special classes of polynomials this question is easier to answer. In this section we give a survey of the major known classes.

The following are elementary classes of \PP s.
\begin{theorem}\label{thm:elementary}
$\mbox{ }$
\begin{my_enumerate}
\item Every linear polynomial over \fq\ is a \PP\ of \fq.
\item The monomial $x^n$ is a \PP\ of \fq\ if and only if $\gcd(n,q-1)=1$.
\end{my_enumerate}
\end{theorem}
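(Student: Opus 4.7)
The plan is to dispatch (1) by an explicit inverse and (2) by translating the problem to the cyclic multiplicative group $\mfq^*$.

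For (1), I would write an arbitrary linear polynomial as $f(x) = ax+b$ with $a \in \mfq^*$ and $b \in \mfq$. Given any $y \in \mfq$, the equation $ax + b = y$ has the unique solution $x = a^{-1}(y - b)$, so condition (4) of \lref{lem:alt_defns} is satisfied and $f$ is a \PP\ of \fq.

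For (2), I may assume $n \geq 1$, since for $n=0$ the polynomial $x^0 = 1$ is clearly not a \PP\ when $q > 1$. The key observation is that $f(x) = x^n$ sends $0$ to $0$, so it is a \PP\ of \fq\ if and only if it restricts to a bijection on $\mfq^*$. Since $\mfq^*$ is a cyclic group of order $q-1$, I would fix a generator $g$ so that every nonzero element is of the form $g^k$ for some $k \in \mathbb{Z}/(q-1)\mathbb{Z}$; then $g^k \mapsto g^{kn}$, and the question reduces to whether the map $k \mapsto kn$ is a bijection of $\mathbb{Z}/(q-1)\mathbb{Z}$. By elementary number theory this holds precisely when $\gcd(n, q-1) = 1$, which is exactly the desired criterion.

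There is essentially no obstacle beyond invoking the classical fact that $\mfq^*$ is cyclic. An alternative route would be via Hermite's criterion (\tref{thm:hermite}): the reduction of $(x^n)^t$ modulo $x^q - x$ is $x^{k}$ with $k \equiv nt \pmod{q-1}$ and $1 \leq k \leq q-1$, and one would need to check that $nt \not\equiv 0 \pmod{q-1}$ for $1 \leq t \leq q-2$ with $p \nmid t$, while $n(q-1) \equiv 0 \pmod{q-1}$ automatically. This works but requires more bookkeeping than the multiplicative-group argument, so I would prefer the latter.
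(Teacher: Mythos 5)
Your proposal is correct and follows essentially the same route as the paper: part (1) is immediate, and part (2) reduces the question to the cyclic group $\mathbb{F}_q^{\times}$, where your observation that $k \mapsto kn$ is a bijection of $\mathbb{Z}/(q-1)\mathbb{Z}$ exactly when $\gcd(n,q-1)=1$ is the same fact the paper expresses by saying $g^n$ generates $\mathbb{F}_q^{\times}$ if and only if $\gcd(n,q-1)=1$. No gaps.
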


\begin{proof}
(1) Trivial. (2) Since $0^n=0$ the monomial $x^n$ is onto if and only if the function $f: \mfq^{\times} \to \mfq^{\times},x \mapsto x^n$ is onto. Let $g$ be a primitive element of the cyclic group $\mfq^{\times}$. Then the image of $\mfq^{\times}$ under $f$ is the cyclic subgroup generated by $g^n$, which equals $\mfq^{\times}$ if and only if $g^n$ is a primitive element. This is equivalent to the statement $\gcd(n,q-1)=1$.
\end{proof}

We now consider a class of polynomials known as $q$-polynomials. Let $q=p^r$ where $p$ is a prime and $r$ is a positive integer. Then a polynomial of the form
\[L(x)=\sum^n_{i=0} a_i x^{q^i}= a_0 x + a_1 x^q + \cdots + a_n x^{q^n} \in \mathbb{F}_{q^m}[x] \]
is called a {\bf \emph{q}-polynomial} over $\mathbb{F}_{q^m}$. Such polynomials are also known as \emph{linearised polynomials}, whose name stems from the properties
\begin{my_enumerate}
\item $L(\beta + \gamma) = L(\beta) + L(\gamma)$ for all $\beta, \gamma \in \mathbb{F}_{q^m}$,
\item $L(c \beta)=c L(\beta)$ for all $c \in \mathbb{F}_q, \beta \in \mathbb{F}_{q^m}$.
\end{my_enumerate}
We remark that properties (1) and (2) hold more generally for $\beta,\gamma$ in an arbitrary extension field of $\mathbb{F}_{q^m}$. If $\mathbb{F}_{q^m}$ is considered as a vector space over $\mathbb{F}_q$ then these properties show that $L(x)$ is a linear operator on $\mathbb{F}_{q^m}$.

The following theorem classifies when a \emph{p}-polynomial is a \PP.
\begin{theorem}\label{thm:p_poly}
Let \fqs be of characteristic $p$. Then the $p$-polynomial 
\[L(x)=\sum^m_{i=0} a_i x^{p^i} \in \mfqx \]
is a \PP \space if and only if $L(x)$ only has the root 0 in \fq.
\end{theorem}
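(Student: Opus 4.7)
The plan is to exploit the linearity of $L$ viewed as an operator on \fq. By the two properties recorded just before the theorem (taken with the role of ``$q$'' there played by $p$ and ``$\mathbb{F}_{q^m}$'' played by \fq), the $p$-polynomial $L$ defines an $\mathbb{F}_p$-linear map from \fq\ to itself. Since \fq\ is a finite-dimensional vector space over its prime subfield $\mathbb{F}_p$, elementary linear algebra guarantees that $L$ is bijective if and only if its kernel is trivial, which in our language means if and only if $L$ has only the root $0$ in \fq. This is precisely the assertion of the theorem, and the proof is essentially a transcription of that observation.

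For the forward direction, suppose $L$ is a \PP\ of \fq. Then $L$ is in particular injective, and since $L(0)=0$, no other element of \fq\ can be mapped to $0$. Hence $0$ is the unique root of $L$ in \fq.

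For the converse, suppose $L(x)$ has only the root $0$ in \fq. If $L(\alpha)=L(\beta)$ for some $\alpha,\beta \in \mfq$, then by the additivity property $L(\alpha-\beta)=L(\alpha)-L(\beta)=0$, and the hypothesis forces $\alpha-\beta=0$, i.e.\ $\alpha=\beta$. Thus $L$ is injective on the finite set \fq, hence bijective, and so a \PP\ of \fq.

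Strictly speaking, the only subtle point is that the stated additivity property was introduced for $q$-polynomials over $\mathbb{F}_{q^m}$, whereas here we apply it to a $p$-polynomial over \fq. This specialisation is harmless, however, since additivity ultimately rests on the Frobenius identity $(u+v)^p = u^p + v^p$ holding in any field of characteristic $p$, and in particular in \fq. There is no genuine obstacle here; the main content of the proof is the recognition that the theorem is a restatement of the standard characterisation of invertibility for a linear endomorphism of a finite-dimensional vector space.
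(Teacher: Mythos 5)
Your proof is correct and follows essentially the same route as the paper: both use the additivity of $L$ to reduce injectivity to the triviality of the set of roots, and then invoke finiteness of \fq\ (via \lref{lem:alt_defns}) to conclude bijectivity. Your remarks on the linear-algebra framing and on specialising the additivity property from $q$-polynomials to $p$-polynomials are accurate but add nothing beyond the paper's one-line appeal to ``the discussion above.''
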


\begin{proof}
Necessity is obvious. Suppose that $L(x)$ only has the root zero. Then by the discussion above we have $L(a)=L(b)$ if and only if $L(a-b)=0$. But since zero is the only root of $L(x)$ we must then have $a=b$. So $L(x)$ is one-to-one, so it is a \PP\ (\lref{lem:alt_defns}).
\end{proof}

We have a second criterion that applies to a class of $q$-polynomials.
\begin{theorem}
Let $\mathbb{F}_{q^m}$ be an extension of \fq\ and consider polynomials of the form
\[ L(x)=\sum^{m-1}_{i=0} a_i x^{q^i} \in \mathbb{F}_{q^m}[x]. \]
Then $L(x)$ is a \PP \space of $\mathbb{F}_{q^m}$ if and only if $\det(A) \neq 0$, where 
\[ A =
\begin{pmatrix}
a_0 & a_{m-1}^q & a_{m-2}^{q^2} & \cdots & a_1^{q^{m-1}} \\
a_1 & a_{0}^q & a_{m-1}^{q^2} & \cdots & a_2^{q^{m-1}} \\
a_2 & a_{1}^q & a_{0}^{q^2} & \cdots & a_3^{q^{m-1}} \\
\vdots & \vdots & \vdots & \ddots & \vdots \\
a_{m-1} & a_{m-2}^q & a_{m-3}^{q^2} & \cdots & a_0^{q^{m-1}} \\
\end{pmatrix}.
\]
If each $a_i$ is an element of \fq\ then $L(x)$ is a \PP \space of $\mathbb{F}_{q^m}$ if and only if 
\[ \gcd \left( \sum_{i=0}^{m-1} a_i x^i , x^i-1 \right) =1.
\]
\end{theorem}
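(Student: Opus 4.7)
The plan is to reduce the problem to an explicit linear-algebra computation. By property (2) of $q$-polynomials given just before \tref{thm:p_poly}, $L$ is an $\mfq$-linear endomorphism of $\mathbb{F}_{q^m}$, which has $\mfq$-dimension $m$. Hence $L$ is a \PP\ of $\mathbb{F}_{q^m}$ if and only if it is invertible as an $\mfq$-linear map, which in turn is equivalent to the $\mfq$-determinant of $L$ being nonzero. The whole argument therefore reduces to showing that this determinant coincides with $\det A$.

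To do this I would fix any $\mfq$-basis $\alpha_0,\ldots,\alpha_{m-1}$ of $\mathbb{F}_{q^m}$, record the matrix of $L$ in this basis as $\tilde L=(L_{ki})$, where $L(\alpha_i)=\sum_k L_{ki}\alpha_k$ and each $L_{ki}\in\mfq$, and introduce the auxiliary matrix $V$ defined by $V_{ij}=\alpha_i^{q^j}$. The key calculation is a direct simplification of $VA$: reindexing $k\mapsto (l+j)\bmod m$ in
\[ (VA)_{ij}=\sum_{k=0}^{m-1}\alpha_i^{q^k}\,a_{(k-j)\bmod m}^{q^j}, \]
together with $\alpha_i^{q^m}=\alpha_i$, yields $(VA)_{ij}=\bigl(\sum_l a_l\alpha_i^{q^l}\bigr)^{q^j}=L(\alpha_i)^{q^j}$. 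Since every $L_{ki}\in\mfq$ is fixed by Frobenius, expanding $L(\alpha_i)^{q^j}=\sum_k L_{ki}\alpha_k^{q^j}=\sum_k L_{ki}V_{kj}$ gives the matrix identity $VA=\tilde L^{T}V$; taking determinants then produces $\det A=\det\tilde L$, provided $\det V\neq 0$.

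The nonvanishing of $\det V$ is the main technical subpoint. If the columns of $V$ were $\mathbb{F}_{q^m}$-linearly dependent, with $\sum_j c_j\alpha_i^{q^j}=0$ for every $i$ and not all $c_j$ zero, then the $q$-polynomial $K(x)=\sum_j c_j x^{q^j}$ would vanish at each $\alpha_i$, and hence by its $\mfq$-linearity on all of $\mathbb{F}_{q^m}$. But $\deg K\leq q^{m-1}<q^m$, so $K$ must be the zero polynomial (by \tref{thm:p_poly} applied to $\mathbb{F}_{q^m}$, or just the polynomial root bound), forcing all $c_j=0$---a contradiction.

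For the second claim the hypothesis $a_i\in\mfq$ collapses the twists $a_i^{q^j}$ to $a_i$, so $A$ becomes the ordinary circulant matrix with $A_{kj}=a_{(k-j)\bmod m}$. I would identify $A$ as the matrix, in the basis $\{1,x,\ldots,x^{m-1}\}$, of the multiplication-by-$P$ operator on the ring $R=\mfq[x]/(x^m-1)$ where $P(x)=\sum_i a_ix^i$; indeed the expansion $P(x)\cdot x^j=\sum_k a_{(k-j)\bmod m}\,x^k$ is precisely column $j$ of $A$. Consequently $\det A\neq 0$ if and only if multiplication by $P$ is bijective on $R$, if and only if $P$ is a unit of $R$, if and only if $\gcd\bigl(P(x),\,x^m-1\bigr)=1$ (the ``$x^i-1$'' printed in the displayed statement being a typographical slip for ``$x^m-1$'').
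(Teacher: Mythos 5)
Your proof is correct, but there is nothing in the paper to compare it against: this theorem appears in the survey of classes of \PP s and is quoted \emph{without proof} (the paper proves neither the determinant criterion nor the gcd criterion). Your argument is the standard one and is sound at every step: $L$ is an $\mfq$-linear endomorphism of $\mathbb{F}_{q^m}$ (note this uses both properties (1) and (2) of $q$-polynomials, not just (2)), so being a \PP\ is equivalent to invertibility of that linear map; the identity $VA=\tilde L^{T}V$ with the Moore matrix $V_{ij}=\alpha_i^{q^j}$ transfers this to $\det A\neq 0$; and the nonvanishing of $\det V$ follows from the root-count bound on the $q$-polynomial $K(x)=\sum_j c_j x^{q^j}$ of degree at most $q^{m-1}$ vanishing on all of $\mathbb{F}_{q^m}$. (The parenthetical appeal to \tref{thm:p_poly} there is not quite apt, since that theorem characterises when a $p$-polynomial is a \PP\ rather than when it is zero, but your fallback to the degree bound is exactly the right justification.) The reduction of the second claim to invertibility of the circulant, i.e.\ to $P$ being a unit of $\mfq[x]/(x^m-1)$, is likewise correct, and you are right that the displayed ``$x^i-1$'' is a misprint for ``$x^m-1$''.
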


If \fq\ is a finite field then polynomials that are \PP s of \emph{all} finite extensions of \fq\ are very rare. The following theorem gives the complete classification of polynomials with this property, which is in fact a special class of $p$-polynomial.
\begin{theorem}
Let $q=p^r$ where $p$ is a prime and $r$ is a positive integer. Then a polynomial $f \in \mfqx$ is a \PP\ of all finite extensions of \fq\ if and only if it is of the form $f(x)=ax^{p^h}+b$, where $a \neq 0$ and $h$ is a nonnegative integer.
\end{theorem}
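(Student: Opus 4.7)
The forward direction is routine. If $f(x) = ax^{p^h} + b$ with $a \neq 0$, then on any extension $\mathbb{F}_{q^m}$, the map $x \mapsto x^p$ is the Frobenius, which restricts to a bijection on every finite subfield. Iterating gives that $x \mapsto x^{p^h}$ is a bijection of $\mathbb{F}_{q^m}$, and the affine post-composition $y \mapsto ay+b$ is clearly bijective. So the whole task is the converse.

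For the converse, my plan is to pass to the algebraic closure. Writing $\overline{\mathbb{F}_q} = \bigcup_{m \geq 1} \mathbb{F}_{q^m}$, observe that any two elements $\alpha,\beta \in \overline{\mathbb{F}_q}$ lie in a common finite subfield $\mathbb{F}_{q^m}$ (take $m$ to be a common multiple of the degrees of their minimal polynomials). Consequently, if $f$ is a \PP\ of every $\mathbb{F}_{q^m}$, then the induced map $\overline{\mathbb{F}_q} \to \overline{\mathbb{F}_q}$ is injective. Since $\overline{\mathbb{F}_q}$ is algebraically closed, every nonconstant polynomial is also surjective on it, so for each $c \in \overline{\mathbb{F}_q}$ the polynomial $f(x)-c$ has exactly one root in $\overline{\mathbb{F}_q}$. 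But $f(x)-c$ has degree $n := \deg f$, so that unique root must have multiplicity $n$, giving $f(x)-c = a_n(x - r_c)^n$ for some $r_c \in \overline{\mathbb{F}_q}$.

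Specialising to $c = f(0)$ forces $r_c = 0$, so $f(x) = a_n x^n + f(0)$; hence $f$ is a binomial $ax^n + b$ with $a \neq 0$ and $b \in \mathbb{F}_q$. It remains to pin down $n$. For this, I would use that $x \mapsto x^n$ must itself be injective on $\overline{\mathbb{F}_q}$, and write $n = p^h k$ with $\gcd(k,p) = 1$. Then $x^n - 1 = (x^k - 1)^{p^h}$, and $x^k - 1$ is separable, so it has $k$ distinct roots in $\overline{\mathbb{F}_q}$, giving $k$ distinct $n$-th roots of unity. Injectivity of $x \mapsto x^n$ forces $k = 1$, i.e.\ $n = p^h$, which is the desired conclusion.

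The only conceptually delicate step is the very first one in the converse: identifying the "\PP\ of every extension" hypothesis with injectivity on $\overline{\mathbb{F}_q}$. Once that bridge is built, the rest is classical manipulation (algebraic closedness plus inseparability/separability of $x^n - 1$); the argument does not actually need Hermite's criterion or any of the machinery of the previous sections, though in principle one could mimic it by repeatedly invoking \cref{cor:notequiv1} to force $p\mid n$ and then arguing inductively on $n/p$.
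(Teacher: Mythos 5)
Your argument is correct, and it goes substantially further than the paper does: the paper proves only sufficiency (by writing $f(x)=a(x+c)^{p^h}$ with $c=a^{-1}b$ and decomposing it as a translation composed with the $p$-polynomial $ax^{p^h}$, invoking \tref{thm:elementary} and \tref{thm:p_poly}) and then simply cites Lidl--Niederreiter for necessity. You supply a self-contained proof of the converse. The key steps all check out: being a \PP\ of every $\mathbb{F}_{q^m}$ does give injectivity on $\overline{\mathbb{F}_q}$, since any two elements lie in a common finite extension; algebraic closedness then gives surjectivity, so $f(x)-c$ has a unique root of multiplicity $n$ for each $c$, and specialising to $c=f(0)$ pins $f$ down to $ax^n+b$; finally the factorisation $x^n-1=(x^k-1)^{p^h}$ with $x^k-1$ separable forces $k=1$. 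The one part I would not lean on is your closing aside that the converse could instead be run through \cref{cor:notequiv1} ``and then arguing inductively on $n/p$'': that corollary only applies when $q^m\equiv 1 \bmod n$, which is impossible once $p\mid n$, so the inductive step is not as routine as the remark suggests. But since that is offered only as an alternative and your actual proof does not depend on it, the proposal stands as a complete and more elementary treatment than the paper's.
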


\begin{proof}
Let $\mathbb{F}_{q^m}$ be a finite extension of \fq. If $c=a^{-1}b$ then we have
\[  f(x)=ax^{p^h}+b=a(x^{p^h}+c)=a(x+c)^{p^h}. \]
Then $f(x)=h \circ g$, where $g(x)=x+c$ is a \PP\ of $\mathbb{F}_{q^m}$ by \tref{thm:elementary} and $h(x)=a x^{p^h}$ is a \PP\ of $\mathbb{F}_{q^m}$ by \tref{thm:p_poly}. Hence, $f(x)$ is a \PP\ of $\mathbb{F}_{q^m}$. For necessity see \cite{lidl}.
\end{proof}

\begin{corollary}
If $f \in \mfqx$ is not of the form $f(x)=ax^{p^h}+b$ then there are infinitely many extension fields $\mathbb{F}_{q^m}$ of \fq\ such that $f$ is not a permutation polynomial of $\mathbb{F}_{q^m}$.
\end{corollary}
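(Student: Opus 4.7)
The plan is to prove the contrapositive by contradiction, leveraging the preceding theorem which already tells us that if $f$ is not of the form $ax^{p^h}+b$, then there exists \emph{at least one} extension $\mathbb{F}_{q^m}$ on which $f$ fails to be a \PP. I need to boost this to infinitely many such extensions.

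First, I would suppose for contradiction that only finitely many integers $m$ have the property that $f$ is not a \PP\ of $\mathbb{F}_{q^m}$. Then there exists $N \geq 1$ such that $f$ is a \PP\ of $\mathbb{F}_{q^m}$ for every $m \geq N$. The crux of the argument is then the following observation: if $f$ is a \PP\ of a field $K$ and $F \subseteq K$ is a subfield, then $f$ is automatically a \PP\ of $F$. This is because injectivity on $K$ restricts to injectivity on $F$, and then \lref{lem:alt_defns} finishes the job.

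Using this, I would argue as follows. Fix an arbitrary positive integer $m$. Choose any integer $k$ large enough that $mk \geq N$; then $f$ is a \PP\ of $\mathbb{F}_{q^{mk}}$. Since $m \mid mk$, the field $\mathbb{F}_{q^m}$ sits as a subfield of $\mathbb{F}_{q^{mk}}$, so by the observation above $f$ is also a \PP\ of $\mathbb{F}_{q^m}$. As $m$ was arbitrary, $f$ is a \PP\ of every finite extension of \fq. By the preceding theorem this forces $f(x)=ax^{p^h}+b$, contradicting the hypothesis.

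I do not expect a serious obstacle here: the proof is essentially a packaging of the preceding classification theorem together with the trivial fact that the \PP\ property passes to subfields. The only mildly subtle point is recognising that one must descend from a large extension to a given $m$, rather than trying to ascend, since injectivity is preserved under restriction but not under extension.
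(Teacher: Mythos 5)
Your proof is correct, and it supplies an argument the paper omits entirely (the corollary is stated there without proof, as an immediate consequence of the preceding theorem). The key point you identify is exactly the right one: the theorem alone only guarantees \emph{one} bad extension, and the upgrade to infinitely many requires the observation that the \PP\ property descends to subfields. One small streamlining: the contradiction framing is unnecessary. Since $f$ is not of the form $ax^{p^h}+b$, the preceding theorem gives a single $m_0$ with $f$ not a \PP\ of $\mathbb{F}_{q^{m_0}}$; your subfield observation, read contrapositively, then says $f$ cannot be a \PP\ of $\mathbb{F}_{q^{m_0 k}}$ for any $k\geq 1$, which is already an infinite family of extensions. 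Either way the substance is the same, and your remark that injectivity restricts to subfields (combined with \lref{lem:alt_defns} to recover bijectivity) is precisely what makes the descent work.
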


The next theorem gives a class of \PP s of a very specific form.
\begin{theorem}\label{thm:specific_class}
Let $h$ be a positive integer with $\gcd(h,q-1)=1$ and let $s$ be a positive divisor of $q-1$. Let $g \in \mfqx$ be such that $g(x^s)$ has no nonzero root in \fq. Then the polynomial
\[ f(x)=x^h (g(x^s))^{(q-1)/s} \]
is a \PP\ of \fq.
\end{theorem}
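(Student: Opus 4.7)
The plan is to show $f$ is injective on \fq, which by \lref{lem:alt_defns}(1) suffices to conclude $f$ is a \PP. First I would dispose of the zero value: since $h \geq 1$ we have $f(0) = 0$, while for any $c \in \mfq$ with $c \neq 0$ the hypothesis that $g(x^s)$ has no nonzero root in \fq\ gives $g(c^s) \neq 0$, so $(g(c^s))^{(q-1)/s}$ is a nonzero element of \fq\ (in fact an $s$-th root of unity, since its $s$-th power is $g(c^s)^{q-1} = 1$ by Fermat's little theorem). Consequently $f(c) \neq 0$ for $c \neq 0$, and it is enough to prove $f$ is injective on $\mfq \setminus \{0\}$.

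The crucial observation is the identity
\[
f(c)^s \;=\; c^{hs} (g(c^s))^{q-1} \;=\; c^{hs} \;=\; (c^s)^h \qquad (c \in \mfq,\ c \neq 0),
\]
which eliminates the awkward $g$-factor. Suppose now $f(c_1) = f(c_2)$ with $c_1, c_2$ nonzero. Raising to the $s$-th power gives $(c_1^s)^h = (c_2^s)^h$, and since $\gcd(h, q-1) = 1$ the map $x \mapsto x^h$ is a bijection of \fq\ (by \tref{thm:elementary}(2)), so $c_1^s = c_2^s$. Writing $c_2 = \zeta c_1$ with $\zeta^s = 1$, the equality $g(\zeta^s c_1^s) = g(c_1^s)$ yields
\[
f(c_2) \;=\; \zeta^h c_1^h (g(c_1^s))^{(q-1)/s} \;=\; \zeta^h f(c_1),
\]
so $\zeta^h = 1$. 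Because $s \mid q-1$ forces $\gcd(h, s) = 1$, combining $\zeta^s = 1$ with $\zeta^h = 1$ gives $\zeta = 1$, hence $c_1 = c_2$ and injectivity is established.

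I do not anticipate a serious obstacle; the heart of the argument is the single observation that the $s$-th power of $f(c)$ collapses to $(c^s)^h$, after which everything reduces to the two bijectivity facts supplied by $\gcd(h, q-1) = 1$: that $x \mapsto x^h$ permutes $\mfq$, and that it also permutes the subgroup $\mu_s$ of $s$-th roots of unity. The hypothesis on $g(x^s)$ is used exactly twice, once to ensure $f$ does not send two points to $0$ and once (via Fermat) to discard $(g(c^s))^{q-1}$ in the key identity.
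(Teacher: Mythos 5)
Your proof is correct, but it takes a genuinely different route from the paper's. The paper verifies the two conditions of Hermite's criterion in the form of \tref{thm:hermite2}: condition (1) holds because $0$ is the unique root of $f$, and for condition (2) it treats the exponents $t$ in two cases --- when $s \nmid t$, every exponent $ht+ms$ occurring in the expansion of $f(x)^t$ is prime to $s$ (using $\gcd(h,s)=1$), hence not divisible by $q-1$; and when $t=ks$, the function $c \mapsto f(c)^t$ coincides with $c \mapsto c^{ht}$ because $(g(c^s))^{q-1}=1$, so $f(x)^t$ reduces to a polynomial of degree at most $q-2$. You instead prove injectivity directly: after isolating $0$ as the unique root, the identity $f(c)^s=(c^s)^h$ together with the two bijectivity facts supplied by $\gcd(h,q-1)=1$ (that $x\mapsto x^h$ permutes \fq, and that $\zeta^h=\zeta^s=1$ forces $\zeta=1$) pins down $c_1=c_2$ whenever $f(c_1)=f(c_2)$. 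The shared kernel of both arguments is the observation that raising $f(c)$ to an appropriate power kills the $g$-factor via Fermat; but your version is more elementary and self-contained, needing only \lref{lem:alt_defns} and \tref{thm:elementary}, whereas the paper's version exercises Hermite's criterion, consistent with the surrounding chapter's emphasis. Each step of your argument checks out, including the deduction $\gcd(h,s)=1$ from $s \mid q-1$, so there is no gap.
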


\begin{proof}
We use \tref{thm:hermite2}. Clearly condition (1) is satisfied. Let $1 \leq t \leq q-2$ and suppose that $s$ does not divide $t$. Now, all exponents of $f(x)^t$ are of the form $ht+ms$ for some positive integer $m$, and since $\gcd(h,s)=1$ none of these exponents is divisible by $s$. Hence no exponents are divisible by $q-1$. So there are no terms of the form $x^{i(q-1)}$ in the expansion of $f(x)^t$, so the reduction of $f(x)^t$ has degree $\leq q-2$. 

Now suppose that $t=ks$ for some positive integer $k$. Then we have 
\[ f(x)^t=x^{ht} (g(x^s))^{(q-1)k}. \]
For all $c \in \mfq^{\times}$ we have $f(c)=c^{ht}$ (because $g(c^s) \neq 0$), and for $c=0$ we have $f(0)=0=0^{ht}$. By \lref{lem:reduction} we have 
\[ f(x)^t \equiv x^{ht} \mod (x^q-x), \]
and since $q-1$ does not divide $ht$ the monomial $x^{ht}$ reduces modulo $x^q-x$ to a polynomial of degree $\leq q-2$.
\end{proof}

The following theorem completely classifies \PP s of the form $x^{(q+1)/2}+ax$ for odd $q$. As is the case with many families of permutation polynomials (see Table \ref{table:normalised}), whether or not a polynomial family parametrised by $a$ is a \PP\ often depends on the \emph{quadratic character} of $a$; that is, whether or not $a$ is a square in \fq. The following theorem is the first place we encounter this. 

We remind the reader that for all $x \in \mfq^{\times}$ we have
\begin{equation}\label{eq:squares}
x^{(q-1)/2} = 
\begin{cases}
1 &\text{if $x$ is a square}, \\
-1 &\text{if $x$ is a nonsquare}.
\end{cases}
\end{equation}

\begin{theorem}\label{thm:quadratic_binomial}
If $q$ is odd then the polynomial $x^{(q+1)/2}+ax \in \mfqx$ is a \PP\ of \fq\ if and only if $a^2-1$ is a nonzero square.
\end{theorem}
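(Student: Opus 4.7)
The plan is to reduce $f$ to a piecewise multiplication map using equation \eqref{eq:squares}. For $x \in \mathbb{F}_q^\times$, factor $f(x) = x\bigl(x^{(q-1)/2} + a\bigr)$. Since $x^{(q-1)/2}$ equals $+1$ on nonzero squares and $-1$ on nonsquares, the restriction of $f$ to the set $S$ of nonzero squares is multiplication by $a+1$, and its restriction to the set $N$ of nonsquares is multiplication by $a-1$. Because $f(0) = 0$, verifying that $f$ is a \PP{} of \fq{} reduces to verifying that $f$ is a bijection of $\mathbb{F}_q^\times$.

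I would first dispose of the degenerate cases $a = \pm 1$: if $a = 1$ then $f$ sends every nonsquare to $0$, and if $a = -1$ then $f$ sends every nonzero square to $0$; in either case $f$ is not a \PP{}, and correspondingly $a^2-1 = 0$ fails to be a nonzero square. So the equivalence holds trivially here, and I may assume henceforth that $a \neq \pm 1$, so that both $a+1$ and $a-1$ are nonzero.

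The heart of the argument is a case analysis on the quadratic characters of $a+1$ and $a-1$. Multiplication by a square preserves each of $S$ and $N$, while multiplication by a nonsquare swaps them. Consequently $f(S) \cup f(N) = \mathbb{F}_q^\times$ exactly when $a+1$ and $a-1$ share a quadratic character (both squares or both nonsquares); if they have opposite characters then $f(S)$ and $f(N)$ coincide as sets and $f$ misses half of $\mathbb{F}_q^\times$, killing injectivity.

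To conclude, I would invoke the multiplicativity of the quadratic character: $a+1$ and $a-1$ share a quadratic character precisely when their product $(a+1)(a-1) = a^2-1$ is a (nonzero) square. Combined with the previous paragraph this yields the claimed equivalence. No step in the plan is genuinely hard; the only subtlety is keeping careful track of the degenerate values $a = \pm 1$, so that the word ``nonzero'' in the statement is properly accounted for.
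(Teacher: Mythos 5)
Your proposal is correct and follows essentially the same route as the paper: factor $f(x)=x\bigl(x^{(q-1)/2}+a\bigr)$, use \eref{eq:squares} to see $f$ acts as multiplication by $a+1$ on nonzero squares and by $a-1$ on nonsquares, dispose of $a=\pm1$ via the repeated root at a nonzero element, and observe that the image covers $\mfq^{\times}$ exactly when $a-1$ and $a+1$ share a quadratic character, i.e.\ when $a^2-1$ is a nonzero square. No substantive differences to report.
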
 

\begin{proof}
Note that $f(x)=x^{(q+1)/2}+ax=(x^{(q-1)/2}+a)x$, so we have by \eref{eq:squares}
\begin{equation}\label{eq:f_squares}
 f(x)= 
\begin{cases}
(a+1)x &\text{if $x$ is a nonzero square}, \\
(a-1)x &\text{if $x$ is a nonsquare}, \\
0 &\text{if $x=0$}.
\end{cases}
\end{equation}
If $a^2-1=0$ then $a= \pm 1$, in which case $f(x)$ has repeated roots by \eref{eq:f_squares}. So we may assume that $a \not\in \{ 1,-1 \}$. Now \eref{eq:f_squares} shows that the image of \fq\ under $f$ is given by
\[ \{ (a-1)x \text{ : $x \in \mfq^{\times}$ is a nonsquare} \} \cup \{ (a+1)x \text{ : $x \in \mfq^{\times}$ is a square} \} \cup \{ 0 \}. \]
The first set contains precisely the squares in $\mfq^{\times}$ if $a-1$ is a square, and precisely the nonsquares in $\mfq^{\times}$ if $a-1$ is a nonsquare. Similarly, the second set contains precisely the nonsquares in $\mfq^{\times}$ if $a+1$ is a square, and precisely the squares in $\mfq^{\times}$ if $a+1$ is a nonsquare. Hence, $f(x)$ is onto \fq\ if and only if $a-1$ and $a+1$ are either both squares or both nonsquares. We can state this condition more compactly as
\[ (a-1)(a+1)=a^2-1 \text{ is a nonzero square.} \]
\end{proof}

The more general class of polynomials of the form $x^{(q+m-1)/m}+ax$, where $m$ is a positive divisor of $q-1$, have also been classified. 
\begin{theorem}
Let $m>1$ be a divisor of $q-1$. Then the polynomial $f(x)=x^{(q+m-1)/m}+ax \in \mfqx$ is a \PP\ of \fq\ if and only if $(-a)^m \neq 1$ and 
\[ \left( \frac{a+\xi^i}{a+\xi^j} \right)^{\frac{q-1}{m}} \neq \xi^{j-i} \text{ for all } 0 \leq i <j <m,\]
where $\xi$ is a fixed primitive $m^{th}$ root of unity in \fq.
\end{theorem}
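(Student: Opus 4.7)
The plan is to exploit the multiplicative structure $f(x) = x(x^s + a)$, where $s = (q-1)/m$, and partition $\mathbb{F}_q^\times$ according to the value of $x^s$. Since $(x^s)^m = x^{q-1} = 1$, the map $x \mapsto x^s$ sends $\mathbb{F}_q^\times$ onto the group $\{1,\xi,\xi^2,\ldots,\xi^{m-1}\}$ of $m$-th roots of unity, with each fibre $C_i := \{x \in \mathbb{F}_q^\times : x^s = \xi^i\}$ a coset of the subgroup of $s$-th powers, of size $s$. For $x \in C_i$ we get the clean formula $f(x) = (a+\xi^i)\,x$, so on each coset $f$ acts by multiplication by the scalar $a + \xi^i$.

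First I would dispose of the condition $(-a)^m \neq 1$. If $(-a)^m = 1$ then $a = -\xi^i$ for some $i$, and the entire coset $C_i$ is sent to $0 = f(0)$, destroying injectivity. Conversely, if $a + \xi^i \neq 0$ for every $i$, then $f$ is bijective on each $C_i$ individually, and $f(0) = 0$ is isolated from all the other images. Thus $f$ is a PP of $\mathbb{F}_q$ if and only if the nonzero images $f(C_0), f(C_1), \ldots, f(C_{m-1})$ are pairwise disjoint.

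Next I would identify which coset $f(C_i)$ lies in. For $x \in C_i$, raising $f(x) = (a+\xi^i)x$ to the $s$-th power yields $f(x)^s = (a+\xi^i)^s \xi^i$. Since $(a+\xi^i)^s$ is itself an $m$-th root of unity in $\mathbb{F}_q^\times$, we can write $(a+\xi^i)^s = \xi^{k_i}$ for a unique $k_i \in \{0,\ldots,m-1\}$, and then $f(C_i) = C_{i+k_i \bmod m}$. Therefore the images partition $\mathbb{F}_q^\times$ precisely when the map $i \mapsto i + k_i \pmod{m}$ is a permutation of $\{0,1,\ldots,m-1\}$, i.e.\ when $i + k_i \not\equiv j + k_j \pmod{m}$ for all $i \neq j$. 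Rewriting this in multiplicative form gives $\xi^{k_i}/\xi^{k_j} \neq \xi^{j-i}$, equivalently
\[
\left( \frac{a+\xi^i}{a+\xi^j} \right)^{\!(q-1)/m} \neq \xi^{j-i} \quad \text{for all } 0 \leq i < j < m,
\]
which is exactly the stated criterion.

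The main obstacle, which is really a bookkeeping subtlety rather than a deep difficulty, is verifying that $(a+\xi^i)^s$ genuinely lies in the subgroup of $m$-th roots of unity (so that $k_i$ is well-defined) and that $f(C_i) = C_{i+k_i}$ holds as a set equality rather than a mere inclusion; both follow from $|C_i| = s$ and the injectivity of $f$ on $C_i$. Once these are pinned down, the argument reduces to the combinatorial equivalence between $i \mapsto i+k_i$ being a permutation of $\mathbb{Z}/m\mathbb{Z}$ and the pairwise inequalities displayed above. The specialisation $m=2$, $\xi = -1$, with the quadratic character identity $x^{(q-1)/2} \in \{\pm 1\}$, recovers Theorem~\ref{thm:quadratic_binomial} as a reassuring sanity check.
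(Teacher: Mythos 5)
Your argument is correct. The paper states this theorem without proof (it is quoted as a survey result), but your approach is exactly the natural generalisation of the paper's own proof of the $m=2$ case, \tref{thm:quadratic_binomial}: there the field is split into squares, nonsquares and zero, and $f$ acts on each piece by a scalar; you replace this with the $m$ cosets $C_i$ of the subgroup of $s$-th powers, correctly identify $f(C_i)=C_{i+k_i}$ via $f(x)^s=(a+\xi^i)^s\xi^i$, and reduce the permutation condition to $i\mapsto i+k_i$ being a bijection of $\mathbb{Z}/m\mathbb{Z}$, which is precisely the displayed inequalities. All the details you flag (well-definedness of $k_i$, the set equality $f(C_i)=C_{i+k_i}$ via cardinality, and the sufficiency of checking $i<j$ by symmetry of the condition under swapping $i$ and $j$) check out.
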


We now introduce a class of polynomials known as \emph{Dickson polynomials}. 

\begin{newdef}
Let $R$ be a commutative ring with identity. For $a \in R$ define the {\bf Dickson polynomial} $g_k(x,a)$ of degree $k$ over $R$ by
\begin{equation*}
g_k(x,a) = \sum_{j=0}^{\lfloor k/2 \rfloor} \dfrac{k}{k-j} \begin{pmatrix} k-j \\ j \end{pmatrix} (-a)^j x^{k-2j}.
\end{equation*}
\end{newdef}

Dickson polynomials satisfy a number of interesting properties, for example we have $g_1(x,a)=x$, $g_2(x,a)=x^2-2a$, and
\[ g_{k+1}(x,a)=x g_k (x,a)-a g_{k-1}(x,a), \text{ for } k \geq 2.\]
We refer the reader to \cite{lidl} for more interesting properties of $g_k(x,a)$. The following theorem characterises when Dickson polynomials are \PP s. Remarkably, whether or not the Dickson polynomial $g_k(x,a)$ is a \PP\ of \fq\ depends only on its degree (not on $a$).

\begin{theorem}
Let  $a \in \mfq^{\times}$. Then the Dickson polynomial $g_k(x,a)$ is a \PP \space of \fqs if and only if $\gcd(k,q^2-1)=1$.
\end{theorem}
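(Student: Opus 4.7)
The plan is to exploit the substitution identity that $g_k$ satisfies
\[
g_k(y + a y^{-1}, a) = y^k + a^k y^{-k}
\]
for every $y \in \overline{\mathbb{F}_q}^{\times}$. This identity can be established by induction on $k$ using the recurrence $g_{k+1}(x,a) = x g_k(x,a) - a g_{k-1}(x,a)$ together with the initial values $g_0(x,a) = 2$ and $g_1(x,a) = x$, both of which follow directly from the explicit formula in the definition of $g_k$.

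Next I would parametrize $\mathbb{F}_q$ through this substitution. For each $x \in \mathbb{F}_q$ the equation $y + a y^{-1} = x$ is equivalent to the quadratic $y^2 - xy + a = 0$, whose two roots lie in $\mathbb{F}_{q^2}^{\times}$ (their product is $a \ne 0$) and form an orbit $\{y, a/y\}$ of the involution $\iota_a : y \mapsto a/y$. A short calculation using $a^q = a$ shows that the condition $\Phi_a(y) := y + a y^{-1} \in \mathbb{F}_q$ is equivalent to $y \in \mathbb{F}_q^{\times}$ or $y^{q+1} = a$, corresponding respectively to the cases where the discriminant $x^2 - 4a$ is a square or a nonsquare in $\mathbb{F}_q$. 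Setting $M_b = \{y \in \mathbb{F}_{q^2}^{\times} : y^{q+1} = b\}$, a coset of the cyclic group $M_1$ of order $q+1$, we obtain a surjection $\Phi_a : \mathbb{F}_q^{\times} \cup M_a \to \mathbb{F}_q$ whose fibres are precisely the $\iota_a$-orbits.

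The substitution identity now translates the action of $g_k(\cdot, a)$ on $\mathbb{F}_q$ into the $k$th power map upstairs, giving the commutative square
\[
\begin{array}{ccc}
\mathbb{F}_q^{\times} \cup M_a & \xrightarrow{y \mapsto y^k} & \mathbb{F}_q^{\times} \cup M_{a^k} \\
\Phi_a \downarrow & & \downarrow \Phi_{a^k} \\
\mathbb{F}_q & \xrightarrow{g_k(\cdot,\, a)} & \mathbb{F}_q.
\end{array}
\]
Because $(a/y)^k = a^k / y^k$, the top map respects the involutions on both sides and descends to a well-defined map between fibres. Since $\Phi_a$ and $\Phi_{a^k}$ are both surjective with fibre structure determined by the involutions, $g_k(\cdot, a)$ is a bijection of $\mathbb{F}_q$ if and only if the induced map on fibres is a bijection. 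The top map clearly sends $\mathbb{F}_q^{\times}$ to $\mathbb{F}_q^{\times}$ and $M_a$ to $M_{a^k}$, so this reduces to two separate statements: the $k$th power map is a bijection on the cyclic group $\mathbb{F}_q^{\times}$ of order $q-1$ iff $\gcd(k, q-1) = 1$, and it is a bijection $M_a \to M_{a^k}$ (equivalently, via translation by a fixed preimage, a bijection on the cyclic group $M_1$ of order $q+1$) iff $\gcd(k, q+1) = 1$. Since $q^2 - 1 = (q-1)(q+1)$, the conjunction of these two coprimality conditions is exactly $\gcd(k, q^2 - 1) = 1$.

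The main obstacle I anticipate is the bookkeeping in the middle step: one must check carefully that the equivalence $y \sim a/y$ behaves consistently when $\mathbb{F}_q^{\times}$ and $M_a$ intersect (which happens precisely when $a$ is a nonzero square, at the two points $\pm \sqrt{a}$ where $\iota_a$ has fixed points), and that bijectivity of the induced quotient map really decomposes into the two separate coprimality conditions without any cross-contamination between the ``real'' and ``norm'' branches. Once the diagram chase is made rigorous, the reduction to elementary number theory is immediate.
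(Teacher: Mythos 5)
The paper states this theorem without proof (it is imported from Lidl and Niederreiter), so there is no internal argument to compare yours against; judged on its own terms, your strategy is the standard one, and the functional equation $g_k(y+ay^{-1},a)=y^k+a^ky^{-k}$ together with the parametrisation of $\mathbb{F}_q$ by the $\iota_a$-orbits in $\mathbb{F}_q^{\times}\cup M_a$ is set up correctly. The direction $\gcd(k,q^2-1)=1\Rightarrow g_k(\cdot,a)$ is a \PP\ is complete as written: the $k$th power map is then injective on all of $\mathbb{F}_{q^2}^{\times}$, so $y_1^k\in\{y_2^k,a^k/y_2^k\}$ forces $y_1\in\{y_2,a/y_2\}$, and the induced map on the $q$ fibres is injective, hence bijective.

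The one genuine gap is the converse, and it is exactly the ``cross-contamination'' point you flag but do not resolve: the nontrivial half of your claimed decomposition is that a failure of injectivity of $y\mapsto y^k$ upstairs cannot be absorbed by the orbit equivalence $y\sim a/y$. To close it, suppose $e=\gcd(k,q+1)>1$ (the case $\gcd(k,q-1)>1$ is parallel, with $\mathbb{F}_q^{\times}$ in place of $M_a$). The set $\{y\in M_a: y^k=y_1^k\}=y_1\mu$, where $\mu$ is the order-$e$ subgroup of $M_1$, has $e$ elements; you need some $y_2$ in it with $y_2\notin\{y_1,a/y_1\}$, for then $\Phi_a(y_1)\neq\Phi_a(y_2)$ while $g_k(\Phi_a(y_1),a)=y_1^k+a^ky_1^{-k}=g_k(\Phi_a(y_2),a)$. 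If $e\geq 3$ this is automatic since the orbit has at most two elements. If $e=2$ then $q$ is odd, $k$ is even, $\mu=\{\pm1\}$ and $y_2=-y_1$, so you must choose $y_1\in M_a$ with $y_1^2\neq-a$; this is possible because $y^2=-a$ has at most two solutions and $|M_a|=q+1\geq 4$. (In the parallel $\mathbb{F}_q^{\times}$ case the analogous choice can fail when $q=3$, but then $k$ even forces $\gcd(k,q+1)>1$ as well and one falls back on the $M_a$ branch.) A final cosmetic point: $g_0(x,a)=2$ is a convention rather than a consequence of the displayed formula, which reads $0/0$ at $k=j=0$; it is cleaner to base the induction on the paper's stated values $g_1=x$ and $g_2=x^2-2a$.
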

An interesting perspective of Dickson polynomials is that they generalise the power polynomial $x^k$. Because $g_k(x,0) = x^k$, which by \tref{thm:elementary} is a \PP\ of \fq\ if and only if $\gcd (k,q-1)=1$. On the other hand, if $a \neq 0$ then the polynomial $g_k(x,a)$ is a \PP\ of \fq\ if and only if $\gcd (k,q^2-1)=1$.

In this section we have endeavoured to list the major known classes of \PP s, but note that we have not attempted an exhaustive survey. Other classes of polynomials have also been characterised; for example, see \cite{wan1991} for polynomials of the form $x^hf(x^{\frac{q-1}{d}})$, and \cite{wan1994} for binomials of the form $x^{m+ \frac{q-1}{2}}+ax^m$.

\section{Normalised Permutation Polynomials}
Let $q=p^r$ and let $f(x)=\sum_{i=0}^n a_i x^i$ be a \PP\ of \fq. Note that the set of \PP s of \fq\ is closed under composition, so in particular the polynomial $g(x) = c f(x+b)+d$ is a \PP\ of \fq\ for all choices of $b,c,d \in \mfq$, $c \neq 0$. Expanding $g$, we have
\[
g(x) =c a_n x^n + c(a_n b n + a_{n-1} )x^{n-1} + \cdots + c(a_n b^n + a_{n-1} b^{n-1} + \cdots + a_0) +d.
\]
By suitable choices of $c$ and $d$ we can ensure that $g$ is monic and satisfies $g(0)=0$; that is, if we choose
\begin{equation}\label{eq:normalised_cd}
 c= a_n^{-1} \text{ and } d=-c (a_nb^n+a_{n-1}b^{n-1}+\cdots+a_0).
\end{equation}
In addition, if $n \not\equiv 0 \mod p$ then we can remove the $x^{n-1}$ term by setting 
\begin{equation}\label{eq:normalised_b} 
b=-a_{n-1}/(a_n n).
\end{equation}
This motivates the following definition.
\begin{newdef}\label{def:normalised}
A \PP\ $f\in \mfqx$ is said to be of {\bf normalised form} if $f$ is monic, $f(0)=0$, and when the degree $n$ of $f$ is not divisible by the characteristic of \fq, the coefficient of $x^{n-1}$ is zero.
\end{newdef}

\begin{remark}
If $p \nmid n$ then any \PP\ $f\in \mfqx$ of degree $n$ has a unique normalised representative $g \in \mfqx$ given by 
\[g(x)=c f(x+b)+d, \]
with $b,c,d$ as defined in \eref{eq:normalised_cd} and \eref{eq:normalised_b}. If $p \mid n$ then with the convention $b=0$ every \PP\ $f\in \mfqx$ of degree $n$ has a unique normalised representative $h \in \mfqx$ given by 
\[h(x)=c f(x)+d, \]
with $c,d$ as defined in \eref{eq:normalised_cd}.\qed
\end{remark}

If we divide all \PP s of \fq\ into classes based on their unique normalised representatives then we have a partition of the set of \PP s of \fq. By counting the number of polynomials in each partition we give an enumerative proof of a classical result in number theory known as Wilson's theorem.

If $q$ is a prime power then there are exactly $q!$ \PP s of \fq\ of degree $<q$. We wish to count the number of \PP s represented by each normalised \PP\ $g \in \mfqx$. First consider the monomial $g(x)=x$. This is the only normalised \PP\ of degree 1 and is the representative of all linear \PP s of the form 
\[ cx+d, \text{ where } c,d \in \mfq, c\neq 0.\]
Hence, $g(x)=x$ represents $q(q-1)$ \PP s. If $g$ is a normalised \PP\ with $\deg(g)>1$ not divisible by $p$, then $g$ represents the $q^2(q-1)$ \PP s given by
\[ c g(x+b)+d, \text{ where } b,c,d \in \mfq, c\neq 0.\]
If, on the other hand, $\deg(g)>1$ and $p$ divides $\deg(g)$, then $g$ represents the $q(q-1)$ \PP s given by 
\[c g(x)+d, \text { where } c,d \in \mfq, c\neq 0.\]
Hence, if $k_1$ is the number of nonlinear normalised \PP s with degree prime to $p$, and $k_2$ is the number of normalised \PP s with degree divisible by $p$, then we have the identity
\begin{equation}\label{eq:for_wilsons}
q! = q(q-1) (1+k_2 + q k_1).
\end{equation}
Using this identity we give the enumerative proof from \cite{dickson1897II} of the following theorem.
\begin{theorem}{\bf(Wilson's theorem).}
If $n$ is a positive integer then the identity
\[(n-1)! \equiv -1 \mod n \]
holds if and only if $n$ is prime.
\end{theorem}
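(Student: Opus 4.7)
The plan is to exploit the identity \eref{eq:for_wilsons} to obtain the congruence $(p-1)! \equiv -1 \pmod{p}$ for primes $p$, and then handle the converse by a short divisibility argument.

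For the forward direction, I would specialise $q = p$ in \eref{eq:for_wilsons}. The critical observation is that $k_2 = 0$ whenever $q$ is prime: by \cref{cor:max_degree}, every nonlinear reduced \PP\ of $\mathbb{F}_p$ has degree at most $p - 2$ (for $p = 2$ this is checked directly, since the only normalised \PP\ of $\mathbb{F}_2$ is $x$), and therefore no normalised \PP\ can have degree a multiple of $p$. Substituting $k_2 = 0$ into \eref{eq:for_wilsons} yields
\[ p! = p(p-1)(1 + p k_1), \]
so that $(p-1)! = (p-1)(1 + p k_1) \equiv p - 1 \equiv -1 \pmod{p}$, as desired.

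For the converse, I would argue contrapositively: if $n$ is composite, then $(n-1)! \not\equiv -1 \pmod n$. If $n = ab$ with $1 < a < b < n$, then $a$ and $b$ are distinct factors appearing in $(n-1)!$, so $n \mid (n-1)!$. If instead $n = p^2$ with $p$ a prime satisfying $p \geq 3$, then $p$ and $2p$ both lie in $\{1, \dots, n-1\}$ (since $2p < p^2$), giving $n \mid (n-1)!$ once more. The single remaining case $n = 4$ is verified by direct computation: $3! = 6 \equiv 2 \not\equiv -1 \pmod 4$.

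There is no substantial obstacle once \eref{eq:for_wilsons} is in hand; the only step requiring any input from the theory of permutation polynomials is the justification $k_2 = 0$ via \cref{cor:max_degree}. Everything else reduces to algebraic manipulation and elementary arithmetic, and the elegance of the argument lies precisely in how the combinatorial identity for $q!$ encodes Wilson's congruence once one restricts to prime $q$.
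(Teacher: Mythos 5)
Your proof is correct and follows essentially the same route as the paper: the forward direction uses \eref{eq:for_wilsons} together with \cref{cor:max_degree} to force $k_2=0$ for prime $q$, exactly as in the text. The only (harmless) divergence is in the converse, where you prove the stronger fact $n \mid (n-1)!$ via a case split (requiring the separate check of $n=4$), whereas the paper gets away with the weaker and shorter observation that $(n-1)! \equiv 0 \bmod p$ for any prime factor $p$ of a composite $n$, which already rules out $(n-1)! \equiv -1 \bmod n$.
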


\begin{proof}
Let $p$ be a prime and consider the set of \PP s of $\mathbb{F}_p$. By \lref{lem:reduction} and \cref{cor:max_degree} we may assume that the degrees of all polynomials are less than $p-1$, thus not being multiples of $p$. By \eref{eq:for_wilsons} we then have $p!=p(p-1)(1+pk)$ for some positive integer $k$. Dividing by $p$ and reducing mod $p$ we have
\[ (p-1)! \equiv -1 \mod p.\]
On the other hand, if $n$ is composite then $(n-1)! \equiv 0 \mod p$ for all prime factors $p$ of $n$, so $(n-1)! \not\equiv -1 \mod n$. 
\end{proof}

\chapter{The Carlitz Conjecture}\label{chap:carlitz}
In an invited address before the Mathematical Association of America in 1966, Professor L. Carlitz presented a conjecture that would motivate almost 30 years of research and significant interest in permutation polynomials. It had been known since 1897 that there exist \PP s of degree 1, 3 and 5 over infinitely many fields \fq, but excepting fields of even characteristic there exist only finitely many \PP s of degree 2, 4 or 6  (see Table \ref{table:normalised}, \chref{chap:deg6} and \cite{dickson1897II}). Carlitz conjectured that perhaps this behaviour was typical; that is, except for fields of small order there are no \PP s of even degree over fields of odd characteristic.

Although there was immediate success in some special cases, progress was made slowly over the next three decades until Carlitz's conjecture was finally resolved in the affirmative by Fried, Guralnick and Saxl in 1993. The story does not end there, however, for around the same time as the work of Fried \emph{et al.} two separate generalisations of Carlitz's conjecture were published. The first, due to Wan, was shortly confirmed. However, a second generalisation conjectured by Mullen has been discussed in published literature but until now has remained unresolved. In Section \ref{sec:mullen} we provide a counterexample to Mullen's conjecture, and also point out how recent results imply an altered version of its statement.

The main goal of this chapter is to give a survey of the major results leading to the proofs of the Carlitz conjecture and Wan's generalisation. We also aim to give some of the history of this journey, and disprove the aforementioned conjecture of Mullen. We will see that the proof of Carlitz's conjecture is closely linked with with the notion of \emph{exceptional polynomials}. These polynomials are discussed in Section \ref{sec:exceptional} along with their relationship with permutation polynomials. We will also be concerned with the so-called \emph{value set} of a polynomial, defined as follows: if $f \in \mfqx$ is a polynomial then the \emph{value set of f}, denoted $V_f$, is given by
\[ V_f = \{ f(c) : c \in \mfqx \}. \]
Note that $f$ is a permutation polynomial of \fq\ if and only if $| V_f |=q$.

\section{Exceptional Polynomials}\label{sec:exceptional}
Let $F$ be a field and recall that we have unique factorisation in $F[x_1,x_2,...,x_n]$ into irreducibles.
\begin{newdef}
A polynomial $f \in F[x_1,x_2,...,x_n]$ is called {\bf absolutely irreducible} if it is irreducible over every algebraic extension of $F$.
\end{newdef}
Equivalently, $f \in F[x_1,x_2,...,x_n]$  is absolutely irreducible if it is irreducible over the algebraic closure of $F$.

\begin{example}
The polynomial $f(x)=x^2+1 \in \mathbb{F}_7[x]$ is irreducible, but not absolutely irreducible because it factors as $(x-\sqrt{-1})(x+\sqrt{-1})$ over $\mathbb{F}_7(\sqrt{-1})=\mathbb{F}_{7^2}$. In fact, it is easy to see that a univariate polynomial $f \in \mfqx$ is absolutely irreducible if and only if it is a linear polynomial. \qed
\end{example}

\begin{example}
The polynomial $g(x,y)=x^2+y^2 \in \mathbb{F}_7[x,y]$ is irreducible, but factors as $g(x,y)=(x+ \sqrt{-1}y)(x- \sqrt{-1}y)$ over $\mathbb{F}_7(\sqrt{-1})=\mathbb{F}_{7^2}$. However, the polynomial $h(x,y)=x^2-y^3 \in \mathbb{F}_7[x,y]$ is absolutely irreducible. \qed
\end{example}

We now introduce \emph{exceptional polynomials}, which are closely related with permutation polynomials.
\begin{newdef}
A polynomial $f \in \mfqx$ of degree $\geq 2$ is said to be {\bf exceptional} over \fq \space if no irreducible factor of
\[ \Phi(x,y) = \dfrac{f(x)-f(y)}{x-y} \]
in $\mfq [x,y]$ is absolutely irreducible.
\end{newdef}
Equivalently, $f$ is exceptional if every irreducible factor of $\Phi(x,y)$ becomes reducible over some algebraic extension of \fq.

Exceptional polynomials were first introduced by Davenport and Lewis in \cite{davenport1963}, where the authors also conjectured the following relationship between exceptional polynomials and permutation polynomials. Although special cases were proved by MaCleur \cite{maccleur1967} and by Williams \cite{williams1968}, first general proof was given by Cohen \cite{cohen1970} using deep methods of algebraic number theory.

\begin{theorem}\label{thm:all_exceptional_pps}
Every exceptional polynomial over \fqs is a permutation polynomial of \fq.
\end{theorem}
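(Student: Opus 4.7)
The plan is to establish the contrapositive: if $f$ is not a permutation polynomial of $\mathbb{F}_q$, then the polynomial $\Phi(x,y) = (f(x)-f(y))/(x-y)$ must possess at least one absolutely irreducible factor over $\mathbb{F}_q[x,y]$.

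First I would translate the statement into a point-counting question on the affine curve $V(\Phi) \subset \mathbb{A}^2$. Observe that $(a,b) \in \mathbb{F}_q^2$ is a zero of $\Phi$ precisely when $a \neq b$ and $f(a) = f(b)$; hence, writing $n_v = |f^{-1}(v)|$, the number of $\mathbb{F}_q$-rational zeros of $\Phi$ equals $\sum_{v \in V_f} n_v(n_v - 1)$, which vanishes if and only if each $n_v = 1$, i.e., $f$ is a \PP. So the goal reduces to showing that $V(\Phi)(\mathbb{F}_q) \neq \emptyset$ forces some irreducible factor of $\Phi$ to be absolutely irreducible.

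Next I would analyse rational zeros via Galois descent and Bezout. Factor $\Phi = \phi_1 \cdots \phi_k$ into irreducibles over $\mathbb{F}_q$, and further factor each $\phi_i = \psi_{i,1} \cdots \psi_{i,r_i}$ over $\overline{\mathbb{F}_q}$, so that the $\psi_{i,j}$ are absolutely irreducible and form a single orbit under $\operatorname{Gal}(\overline{\mathbb{F}_q}/\mathbb{F}_q)$. Assuming $f$ is exceptional, every $r_i \geq 2$. Any $\mathbb{F}_q$-rational zero of $\phi_i$, being Frobenius-fixed, must be a common zero of all Galois-conjugates $\psi_{i,j}$; by Bezout's theorem, $V(\psi_{i,1}) \cap V(\psi_{i,2})$ is a finite set of size at most $(\deg \psi_{i,1})(\deg \psi_{i,2}) \leq (\deg \phi_i)^2/4$. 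Summing across $i$ produces a universal upper bound $|V(\Phi)(\mathbb{F}_q)| \leq (\deg f - 1)^2/4$ that depends only on $\deg f$, not on $q$.

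The main obstacle is to promote this finite bound all the way down to zero; a priori Bezout only rules out \emph{many} collisions but not one or two. The standard route, following Cohen, is to combine the Bezout estimate with the Weil lower bound for absolutely irreducible curves: any such curve over $\mathbb{F}_q$ of degree $d$ has at least $q + 1 - (d-1)(d-2)\sqrt{q} - d$ affine $\mathbb{F}_q$-points. The key bootstrap is to show that a single rational collision $(a,b)$ of $f$, together with its images under the Galois action on ordered pairs of roots of $f(x) - t$ over $\mathbb{F}_q(t)$, forces $|V(\Phi)(\mathbb{F}_q)|$ to exceed the Bezout ceiling above, so some $\phi_i$ must in fact have been absolutely irreducible. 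Carrying out this bootstrap uniformly in $q$ (i.e., without assuming $q$ is large relative to $\deg f$) is the technical heart of the theorem and, in Cohen's original proof, rests on deep tools from the algebraic number theory of function fields.
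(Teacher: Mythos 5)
The paper does not actually prove this theorem: it cites Cohen for the first general proof and notes that Wan derived it from the value-set bound (\tref{thm:wan_bound}), which is the only ingredient proved in the text (the implication itself is deferred to Wan's paper). So your proposal attempts more than the paper does and must be judged on its own terms. The first two stages are essentially sound, modulo one concrete slip: a diagonal point $(a,a)$ can be a zero of $\Phi$ (indeed $\Phi(a,a)=f'(a)$, and for $f(x)=x^n$ with $n>1$ the origin always lies on $V(\Phi)$ even when $x^n$ is exceptional), so $|V(\Phi)(\mathbb{F}_q)|$ is not equal to $\sum_v n_v(n_v-1)$, and the reduction ``$V(\Phi)(\mathbb{F}_q)\neq\emptyset$ forces an absolutely irreducible factor'' is false as stated. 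You must work with the off-diagonal rational points throughout; your Galois-orbit/Bezout argument then correctly shows that an exceptional polynomial of degree $n$ admits at most $(n-1)^2/4$ ordered collisions.

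The genuine gap is the final ``bootstrap,'' which is both unproved and logically backwards as described. The Weil lower bound applies only to absolutely irreducible curves, i.e., it presupposes exactly the factor whose existence you are trying to establish; it is the tool for the \emph{converse} implication (\tref{thm:ppsexceptional}: a \PP\ over a sufficiently large field is exceptional), not for this one. Moreover, the claim that a single rational collision forces more than $(n-1)^2/4$ collisions cannot hold uniformly in $q$ by any soft argument: a non-\PP\ with exactly one colliding pair contributes only two off-diagonal rational points, which is perfectly compatible with the Bezout ceiling once $n\geq 4$. Closing this for all $q$, rather than for $q$ large relative to $n$, is precisely the hard content of the theorem; the available routes are Cohen's function-field argument, Wan's $p$-adic lifting, or Turnwald's elementary symmetric-function argument, and the proposal supplies none of them. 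Combining your Bezout estimate with \tref{thm:wan_bound} (one gets $q-|V_f|=\sum_v(n_v-1)\leq \tfrac12\sum_v n_v(n_v-1)\leq (n-1)^2/8$, contradicting $|V_f|\leq q-\lceil (q-1)/n\rceil$ once $q>n(n-1)^2/8+1$) would at least yield an honest proof for large $q$, but the statement carries no such restriction, so as written the proof is incomplete.
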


In \cite{wan1991a}, D. Wan shows that \tref{thm:all_exceptional_pps} is a consequence of the following result, which states that any polynomial producing sufficiently many distinct elements is a \PP. Wan proves this theorem by way of a $p$-adic lifting lemma, but we present here the more elementary proof from \cite{turnwald1995} based on elementary symmetric polynomials.

Recall the following about symmetric polynomials. If $R$ is a ring then a polynomial $f \in R[x_1,...,x_n]$ is called \emph{symmetric} if $f(x_{i_1},...,x_{i_n})=f(x_1,...,x_n)$ for any permutation $i_1,...,i_n$ of the integers $1,...,n$. If $z$ is an indeterminate over $R[x_1,...,x_n]$ then the \emph{$k^{th}$ elementary symmetric polynomial} $s_k$ is defined by
\[ \prod_{i=1}^n (z-x_i) = \sum_{k=0}^n (-1)^k s_k z^{n-k}. \]
That is, $s_0=1$ and
\[
s_k(x_1,...,x_n)=\sum_{1 \leq i_1 < \cdots < i_k \leq n} x_{i_1}\cdots x_{i_k} \text{ for all } 1 \leq k \leq n.
\]
The \emph{fundamental theorem on symmetric polynomials} states that every symmetric polynomial $f(x_1,...,x_n)$ is a polynomial in $s_1(x_1,..,x_n),...,s_n(x_1,..,x_n)$.

If $R=\mfq$ is a finite field then it is easy to see that $\prod_{i=1}^q (x-c_i) = x^q-x$ if and only if $\{c_1,...,c_q \} = \mfq$. Hence, if $\{c_1,...,c_q \} = \mfq$ then we have $s_k(c_1,c_2,...,c_q)=0$ for all $1 \leq k \leq q-2$. Also note the identity 
\[s_k(x,x,...,x)=\sum_{1 \leq i_1 < \cdots < i_k \leq q} x^k = \left( \begin{matrix} q \\ k \end{matrix} \right) x^k = 0 \text{ for all } 1 \leq k \leq q-1. \]

\begin{theorem}{\emph{\bf (Wan).}}\label{thm:wan_bound}
Let $f(x) \in \mfqx$ be a polynomial of positive degree $n$. If $f(x)$ is not a \PP\ of \fq, then
\[ | V_f | \leq q- \left\lceil \frac{q-1}{n} \right\rceil, \]
where $\lceil m \rceil$ denotes the least integer $\geq m$.
\end{theorem}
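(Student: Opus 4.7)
The plan is to compare the two polynomials
\[ F(z) := \prod_{c \in \mfq}(z - f(c)) = \sum_{k=0}^{q}(-1)^k s_k z^{q-k} \qquad \text{and} \qquad z^q - z = \prod_{c \in \mfq}(z-c), \]
where $s_k$ denotes the $k$-th elementary symmetric polynomial of the multiset $\{f(c) : c \in \mfq\}$. Note that $f$ is a \PP\ exactly when $F(z) = z^q - z$. I would set $D(z) := F(z) - (z^q - z)$ and $G(z) := \prod_{b \in V_f}(z-b)$, and observe that for each $b \in V_f$ one has $F(b) = 0$ and $b^q - b = 0$, so $D$ vanishes on $V_f$ and the squarefree polynomial $G$ (of degree $v$) divides $D$. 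Under the hypothesis that $f$ is not a \PP, $D \neq 0$ and hence $v = \deg G \leq \deg D$. The task therefore reduces to proving $\deg D \leq q - \lceil (q-1)/n \rceil$; since for $1 \leq k \leq q-2$ the coefficient of $z^{q-k}$ in $D$ equals $(-1)^k s_k$, this reduces further to showing $s_k = 0$ whenever $kn < q-1$.

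This last claim is the key step, and the main obstacle will be that a direct appeal to Newton's identities (feeding in the easy fact $p_k := \sum_c f(c)^k = 0$ for $kn < q-1$, which holds because $f(x)^k$ has degree below $q-1$ and hence no $x^{q-1}$-term) succeeds only when $\gcd(k,p) = 1$. In characteristic $p$ the identity $k s_k = \sum_{j=1}^{k}(-1)^{j+1} p_j s_{k-j}$ collapses to $0 = 0$ as soon as $p \mid k$, and yields no information about those $s_k$. My plan is to sidestep this entirely by treating $c_1, \ldots, c_q$ as independent indeterminates and considering
\[ S_k := s_k(f(c_1), \ldots, f(c_q)) \in \mfq[a_0, \ldots, a_n, c_1, \ldots, c_q], \]
which is symmetric in the $c_j$. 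By the fundamental theorem of symmetric polynomials, $S_k$ lies in $\mfq[a][e_1(c), \ldots, e_q(c)]$, where $e_l$ denotes the $l$-th elementary symmetric polynomial in the $c_j$. Since $S_k = \sum_{|S|=k}\prod_{j \in S} f(c_j)$ and each $f(c_j)$ has $c_j$-degree at most $n$, every summand has total $c$-degree at most $kn$, so only $e_l$ with $l \leq kn$ can appear in the $e$-expansion of $S_k$.

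Finally I would specialize the $c_j$ to the $q$ elements of \fq. From $\prod_{c \in \mfq}(z-c) = z^q - z$ one immediately reads off $e_l(\mfq) = 0$ for $1 \leq l \leq q-2$. Under the hypothesis $kn < q-1$, every $e_l$ that can appear in $S_k$ therefore evaluates to zero, so the only potentially surviving contribution is the $e$-free part of $S_k$: this is obtained by choosing the constant term $a_0$ in each of the $k$ factors of every $\prod_{j \in S}f(c_j)$, and so equals $\binom{q}{k} a_0^k$, which vanishes by the identity $\binom{q}{k} \equiv 0 \pmod{p}$ for $1 \leq k \leq q-1$ recorded just before the theorem. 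Hence $s_k = 0$ whenever $kn < q-1$, which forces $\deg D \leq q - \lceil (q-1)/n \rceil$ and therefore $|V_f| \leq q - \lceil (q-1)/n \rceil$, as required.
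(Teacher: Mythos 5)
Your proposal is correct and follows essentially the same route as the paper's proof: both reduce the bound to showing $s_k=0$ for $kn<q-1$ via the fundamental theorem on symmetric polynomials (with the elementary symmetric functions of all of $\mfq$ vanishing and the constant term handled by $\binom{q}{k}\equiv 0 \bmod p$), and both then bound $|V_f|$ by the degree of the difference $\prod_{c}(z-f(c))-(z^q-z)$, since every element of $V_f$ is a root of it. Your explicit remark on why Newton's identities fail in characteristic $p$ is a useful motivation the paper omits, but the underlying argument is the same.
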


\begin{proof}
If $n \geq q$ then $\left\lceil \frac{q-1}{n} \right\rceil=1$ and the assertion holds trivially, so assume that $1 \leq n \leq q-1$. Then $| V_f | \geq 2$, for otherwise $f$ is constant on all values of \fq\ in contradiction to \lref{lem:unique_poly}.

Let $\mfq=\{ c_1,...,c_q \}$ and let $s_k$ represent the $k^{th}$ elementary symmetric polynomial of the values of $f(x)$, that is, 
\[ \prod_{i=1}^q (x-f(c_i)) = \sum^q_{k=0} (-1)^k s_k x^{q-k}. \]
Let $u$ be the least positive integer $k$ such that $s_k \neq 0$ if such $k$ exists; otherwise let $u= \infty$. 

Suppose that $k$ is such that $0<kn<q-1$ and consider the symmetric polynomial $s_k(f(x_1), f(x_2),...,f(x_q))$. This polynomial has degree at most $kn<q-1$, so by the fundamental theorem on symmetric polynomials it is a polynomial in $s_1(x_1,...,x_q),...,s_{q-2}(x_1,...,x_q)$. Hence, $s_k(f(c_1),...,f(c_q))$ is a polynomial in $s_1(c_1,...,c_q),...,s_{q-2}(c_1,...,c_q)$, all of which are zero. The constant term is $s_k(f(0),...,f(0))=0$. Hence, 
\begin{equation}\label{eq:wan_proof1}
u \geq (q-1)/n.
\end{equation}
Consider the polynomial  
\[ g(x)= x^q - x -\prod_{i=1}^q (x-f(c_i)). \]
Since $\deg \left( x^q -\prod_{i=1}^q (x-f(c_i)) \right) = q-u$ we have $\deg(g) \leq q-u$. Now $g(x)=0$ if and only if $\prod_{i=1}^q (x-c_i) = x^q-x$, which is equivalent to $f$ being a \PP. Hence, if $f$ is not a \PP\ then $g(x) \neq 0$. But then $f(c_i)$ is a root of $g$ for all $1 \leq i \leq q$, so 
\begin{equation}\label{eq:wan_proof2}
| V_f | \leq \deg(g) \leq q-u.
\end{equation}

Combining \eref{eq:wan_proof1} and \eref{eq:wan_proof2} we have
\[ | V_f | \leq q- \left\lceil \frac{q-1}{n} \right\rceil. \]
\end{proof}

Note that \tref{thm:wan_bound} is the precisely the statement given in \tref{thm:9criteria} (5). For Wan's proof that \tref{thm:wan_bound} implies \tref{thm:all_exceptional_pps}, see \cite[Theorem~5.1]{wan1991a}.

It is true that all exceptional polynomials are \PP s, so the converse question naturally arises: are all \PP s exceptional? The following example shows that this is not the case.
\begin{example}\label{eg:nonexceptionalpp}
Let $q=p^r$, $a \in \mfq$, and consider the polynomial
\begin{equation}\label{eq:pp_notexceptional}
f(x)=x^p +a \in \mfq[x].
\end{equation}
Then $f$ is a \PP\ by \tref{thm:p_poly}, but we have
\[ \Phi(x,y)=\frac{x^p-y^p}{x-y}=\frac{(x-y)^p}{x-y}=(x-y)^{p-1}. \]
All irreducible factors $\Phi$ are linear, thus being irreducible over every algebraic extension of \fq. Hence, $f$ is not exceptional. \qed
\end{example}

The polynomial in \eref{eq:pp_notexceptional} is a permutation polynomial of $\mathbb{F}_{p^r}$ for all positive integers $r$. Hence, there exist examples of non-exceptional \PP s over fields of arbitrarily large order. However, such examples only arise for polynomials that are \emph{not separable}. We will see that excluding these troublesome polynomials it is true that all \PP s are exceptional - provided that $q$ is sufficiently large compared to the degree of the polynomial.

Note that for any $f \in \mfqx$ there exists a unique integer $t \geq 0$ and a polynomial $g \in \mfqx$ such that $f(x)=g(x^{p^t})$, but $f(x) \neq h(x^{p^{t+1}})$ for any $h \in \mfqx$. Then $t>0$ if and only if $f'(x)=0$. This motivates the following definition.
\begin{newdef}
A polynomial $f \in \mfqx$ is called {\bf separable} if $f'(x) \neq 0$.
\end{newdef}
We remark that in other areas of mathematics there exist different definitions of separable polynomials, but in the study of permutation polynomials the above definition is standard (see for example \cite{wan1990,gathen1991,cohen1991,cohen1995}). 

Note that if $f(x)$ is not separable then we can write $f(x)=g(x^{p^t})$, where $t>0$ and $g \in \mfqx$ is separable. Then $f$ is a \PP\ if and only if $g$ a \PP, so in most cases we can assume without loss that polynomials are separable (otherwise we could replace $f$ with $g$).

If we assume separability then it is true that, apart from fields of small order, all \PP s are exceptional polynomials. The following result was proved by Wan \cite{wan1987} using a powerful theorem of Lang and Weil on the number of rational points of an algebraic curve over a finite field.  
\begin{theorem}\label{thm:sequence_exceptional}
There exists a sequence $c_1,c_2,...$ of integers such that for any separable polynomial $f \in \mfqx$ of degree $n$ we have: if $q \geq c_n$ and $f$ is a \PP\ then $f$ is exceptional over \fq.
\end{theorem}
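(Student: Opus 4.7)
The plan is a proof by contradiction using the Lang--Weil estimate for absolutely irreducible curves over $\mfq$. Suppose $f \in \mfqx$ is a separable permutation polynomial of degree $n$ that is not exceptional; by definition, some irreducible factor $\psi(x,y) \in \mfq[x,y]$ of
\[
\Phi(x,y) = \frac{f(x)-f(y)}{x-y}
\]
remains irreducible over $\overline{\mfq}$. Write $d = \deg \psi$, so $1 \leq d \leq n-1$. The goal is to exhibit, for all sufficiently large $q$, a pair $(a,b) \in \mfq^2$ with $a \neq b$ and $\psi(a,b) = 0$; any such pair satisfies $f(a) = f(b)$ via the relation $(x-y)\Phi(x,y) = f(x) - f(y)$, contradicting the injectivity of $f$ on \fq.

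Separability enters at the first subtle step. A direct computation gives $\Phi(x,x) = f'(x)$, which is nonzero by hypothesis; hence $x-y$ does not divide $\Phi$, and in particular $\psi$ is coprime to $x-y$. B\'ezout's theorem then bounds the number of solutions of $\psi = 0$ lying on the diagonal by $d \leq n-1$. Next, apply Lang--Weil to the absolutely irreducible plane curve $\psi(x,y) = 0$: by passing to the projective closure, bounding the contributions of singular points and points at infinity by quantities depending only on $d$, and invoking the Weil bound on the desingularisation, one obtains
\[
\bigl| N_q(\psi) - q \bigr| \leq (d-1)(d-2)\sqrt{q} + B_d,
\]
where $N_q(\psi)$ is the number of affine $\mfq$-rational points of $\psi = 0$ and $B_d$ depends only on $d$. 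Subtracting the diagonal contribution, the number of pairs $(a,b) \in \mfq^2$ with $a \neq b$ and $\psi(a,b) = 0$ is at least
\[
q - (n-2)(n-3)\sqrt{q} - B_{n-1} - (n-1).
\]
Define $c_n$ as the smallest positive integer for which this expression is strictly positive whenever $q \geq c_n$; since all constants depend only on $n$, such a $c_n$ exists. For any $q \geq c_n$ we then obtain at least one witnessing pair, contradicting the assumption that $f$ is a permutation polynomial.

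The main obstacle is the correct and uniform application of Lang--Weil: one must isolate an error constant depending only on $d$ (not on the particular $\psi$), which requires bounding singular points, points at infinity, and the genus of the smooth model entirely in terms of $\deg \psi$. A secondary technicality is that $\Phi$ may split into several absolutely irreducible factors of differing degrees; this causes no difficulty, as the argument is applied to any fixed absolutely irreducible factor and only its degree $d \leq n-1$ enters the bound. Finally, the separability hypothesis cannot be dropped: \egref{eg:nonexceptionalpp} shows that inseparable permutation polynomials exist for which $\Phi$ is a power of $x-y$, precisely the case the diagonal argument is designed to exclude.
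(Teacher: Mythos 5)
Your proposal is correct and follows exactly the route the paper indicates: the paper does not prove \tref{thm:sequence_exceptional} itself but cites Wan's 1987 argument via the Lang--Weil theorem, and your sketch is precisely that standard argument (negate exceptionality to extract an absolutely irreducible factor $\psi$ of $\Phi$ defined over \fq, use separability to keep $\psi$ off the diagonal, and use Lang--Weil with constants depending only on $\deg\psi \leq n-1$ to produce an off-diagonal \fq-point for $q$ large, contradicting injectivity). The points you flag as delicate --- uniformity of the Lang--Weil constant in $d$ and the necessity of separability, illustrated by \egref{eg:nonexceptionalpp} --- are exactly the right ones.
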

We note that \tref{thm:sequence_exceptional} had already been proved by Hayes \cite{hayes1967} for polynomials satisfying $\gcd(n,q)=1$. The special case of Hayes' theorem when $q$ is prime was established by Davenport and Lewis \cite{davenport1963}; quantitative versions were given by Bombieri and Davenport \cite{davenport1966} and Tiet{\"{a}}v{\"{a}}inen \cite{tiet1966}.

Although versions of \tref{thm:sequence_exceptional} had been known for over 20 years, in 1991 von zur Gathen \cite{gathen1991} proved the following version with the explicit sequence $c_n=n^4$. In the language of the previous discussion this quantifies what is meant by a field of `small order'. As in the work of Hayes and Wan a central ingredient of von zur Gathen's proof is the Lang and Weil theorem. This result ultimately allows the Carlitz conjecture to be stated quantitatively.
\begin{theorem}\label{thm:ppsexceptional} 
Let $f \in \mfqx$ be separable of degree $n$. If $q \geq n^4$ and $f$ is a \PP, then $f$ is exceptional.
\end{theorem}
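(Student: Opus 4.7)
The plan is to argue by contrapositive using the Lang--Weil estimate on $\mfq$-rational points of absolutely irreducible plane curves. Suppose $f$ is separable of degree $n$, $q \geq n^4$, and $f$ is \emph{not} exceptional: then by definition some irreducible factor $\phi(x,y) \in \mfq[x,y]$ of $\Phi(x,y) = (f(x)-f(y))/(x-y)$ is absolutely irreducible. Writing $d = \deg \phi$, we have $1 \leq d \leq n-1$, and the goal is to show $f$ cannot be a \PP.

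The crux is to count $\mfq$-rational affine points on the curve $C : \phi(x,y) = 0$ in two incompatible ways. First, if $f$ were a \PP, then any $(a,b) \in \mfq^2$ with $\phi(a,b) = 0$ would satisfy $f(a) = f(b)$ (since $\phi \mid \Phi$) and hence $a = b$; so every affine $\mfq$-point of $C$ would lie in the zero locus of the single-variable polynomial $\phi(x,x) \in \mfqx$. I first verify that $\phi(x,x)$ is not identically zero: otherwise $(x-y) \mid \phi$, hence $(x-y) \mid \Phi$, yet $\Phi(x,x) = f'(x) \neq 0$ precisely by the separability hypothesis. Consequently $N$, the number of affine $\mfq$-points of $C$, satisfies $N \leq \deg \phi(x,x) \leq d$.

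Second, I invoke an effective Lang--Weil theorem for absolutely irreducible plane curves of degree $d$ over $\mfq$, which yields a bound of the form
\[
|N - q| \leq (d-1)(d-2)\sqrt{q} + K(d),
\]
with $K(d)$ an explicit function polynomial in $d$ (Schmidt's refinement or the Aubry--Perret bound suffice). Combining with $N \leq d$ yields
\[
q \leq d + (d-1)(d-2)\sqrt{q} + K(d).
\]
Substituting $d \leq n-1$ and $\sqrt{q} \geq n^2$, the right-hand side is at most $(n^2 - 5n + 6)\sqrt{q} + K(n-1) + n$, while the left-hand side is at least $n^2 \sqrt{q}$, so the inequality fails by a margin of $(5n - 6)\sqrt{q} - K(n-1) - n$. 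This margin is positive once $q \geq n^4$ and $K$ grows no faster than a cubic in $d$, producing the desired contradiction.

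The main obstacle is calibrating the Lang--Weil error term $K(d)$ sharply enough to be absorbed by the $n^4$ threshold: the asymptotic slack $(5n - 6)\sqrt{q}$ is only $\Theta(n^3)$ when $q = n^4$, so one needs an effective Lang--Weil with $K(d) = O(d^3)$ or better, and one must carefully account for the contribution of singularities and points at infinity when passing from the projective to the affine count. A subsidiary boundary case is $d = 1$, where $\phi$ is an absolutely irreducible line meeting $\mfq^2$ in exactly $q$ points; this instantly contradicts $N \leq 1$ for any $q > 1$, and so never produces a nontrivial factor in practice. Beyond this bookkeeping the argument is essentially the two-way point count above.
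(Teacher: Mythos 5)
The paper does not actually prove \tref{thm:ppsexceptional}; it quotes the result from von zur Gathen \cite{gathen1991}, remarking only that the Lang--Weil theorem is the central ingredient. Your argument is, in structure, precisely that proof, and the qualitative half is sound: the identity $\Phi(x,x)=f'(x)$ correctly uses separability to exclude $(x-y)\mid\phi$, the diagonal count $N\leq\deg\phi(x,x)\leq d$ is right, and playing this off against an effective Weil-type lower bound for the absolutely irreducible curve $\phi=0$ is the correct mechanism (with $d=1$ dispatched immediately, as you note). The one place your bookkeeping is too loose to stand as written is exactly the calibration you flag: at $q=n^4$ the available slack is $(5n-6)\sqrt{q}=5n^3-6n^2$, so an error term satisfying merely $K(d)=O(d^3)$ is \emph{not} automatically absorbable --- a cubic with leading constant $5$ or more would sink the inequality, so ``$O(d^3)$ or better'' is not a sufficient hypothesis. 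What one actually uses is that a plane curve of degree $d$ has genus at most $(d-1)(d-2)/2$ and that the corrections for singular points and points at infinity are $O(d^2)$, giving $K(d)=O(d^2)$ with small explicit constants (the Aubry--Perret form, or the elementary genus estimate, suffices); with that, the margin $5n^3-6n^2-O(n^2)$ is positive for every $n\geq 2$ after checking the finitely many small values of $n$ by hand. Replace the cubic allowance by a concrete quadratic bound and perform that small-$n$ check, and your proof is complete and faithful to the argument the paper cites.
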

In light of \tref{thm:ppsexceptional} we have the following results on the \emph{non existence} of \PP s of \fq\ of certain degrees $n$. See \cite[Ch.~7]{lidl} for proofs of their non-quantitative analogues; the bound $q \geq n^4$ comes from \tref{thm:ppsexceptional}, see \cite[Corollary~3]{gathen1991}.

\begin{theorem}\label{thm:nopps_rootof1}
Let $n \geq 1$ and suppose that $q \geq n^4$. If $\gcd(n,q)=1$ and \fq\ contains an $n^{th}$ root of unity different from 1 then there is no \PP\ of \fq\ of degree $n$.
\end{theorem}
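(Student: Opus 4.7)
The plan is to derive a contradiction by exhibiting an absolutely irreducible factor of $\Phi(x,y) := (f(x)-f(y))/(x-y)$ in $\mfq[x,y]$, thereby contradicting the exceptionality that \tref{thm:ppsexceptional} forces on any degree-$n$ \PP\ when $q \geq n^4$. First I would suppose for contradiction that $f \in \mfqx$ is a \PP\ of degree $n \geq 2$ (the $n=1$ case is vacuous, since the only first root of unity is $1$). Because $\gcd(n,q)=1$, the characteristic $p$ does not divide $n$, so $f'$ has nonzero leading coefficient $na_n$ and hence $f$ is separable; \tref{thm:ppsexceptional} then forces $f$ to be exceptional over \fq.

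The core of the argument is to study the top (highest total degree) homogeneous component $T(x,y)$ of $\Phi(x,y)$. A direct expansion gives
\[ T(x,y) = a_n \cdot \frac{x^n-y^n}{x-y} = a_n \prod_{j=1}^{n-1}(x-\zeta_n^j y), \]
where $\zeta_n \in \overline{\mfq}$ is a primitive $n$-th root of unity (which exists because $\gcd(n,q)=1$). Crucially, the $n-1$ linear factors are pairwise distinct, so $T$ is squarefree. Next I would write $\Phi = \prod_i g_i$ for a factorisation into absolutely irreducible polynomials in $\overline{\mfq}[x,y]$, denote by $T_i$ the top form of each $g_i$ so that $T = \prod_i T_i$, and observe that $\mathrm{Gal}(\overline{\mfq}/\mfq)$ permutes the $g_i$'s. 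Exceptionality of $f$ then translates to the statement that every Galois orbit on the $g_i$'s has size at least $2$, since an orbit of size $1$ would yield a $g_i \in \mfq[x,y]$ providing an absolutely irreducible factor of $\Phi$ over \fq.

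The contradiction is delivered by the hypothesised $\zeta \in \mfq \setminus \{1\}$ with $\zeta^n = 1$: the linear form $x-\zeta y$ is one of the factors $(x - \zeta_n^j y)$ and so divides a unique $T_{i_0}$. For any $\sigma \in \mathrm{Gal}(\overline{\mfq}/\mfq)$ the top form of $\sigma(g_{i_0})$ is $\sigma(T_{i_0})$, and since $\sigma$ fixes $\zeta \in \mfq$ it fixes $x - \zeta y$, which therefore also divides $\sigma(T_{i_0})$. If $\sigma(g_{i_0}) \neq g_{i_0}$, then $T_{i_0}$ and $\sigma(T_{i_0})$ would be two distinct factors of $T$ both divisible by $x - \zeta y$, forcing $(x - \zeta y)^2 \mid T$ and contradicting squarefreeness. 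Hence $g_{i_0}$ must be Galois-fixed, lies in $\mfq[x,y]$, and is an absolutely irreducible factor of $\Phi$ over \fq, contradicting exceptionality. The step I expect to require the most care is this Galois/top-form bookkeeping: one must justify that the top form of a product equals the product of top forms (immediate since each $T_i$ is nonzero) and that Galois orbits on the absolutely irreducible factors of $\Phi$ correspond exactly to the $\mfq$-irreducible factorisation of $\Phi$ in $\mfq[x,y]$.
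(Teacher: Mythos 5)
Your proof is correct. Note, however, that the paper itself offers no proof of this theorem: it simply states it and defers to the references (Lidl--Niederreiter for the non-quantitative version and von zur Gathen for the bound $q \geq n^4$), so there is no in-text argument to compare against. Your argument is the classical one that appears in those sources: reduce to separability via $\gcd(n,q)=1$, invoke \tref{thm:ppsexceptional} to force exceptionality, and then contradict exceptionality by locating an absolutely irreducible factor of $\Phi(x,y)$ defined over $\mathbb{F}_q$. The key steps all check out: the top homogeneous form of $\Phi$ is $a_n\prod_{j=1}^{n-1}(x-\zeta_n^j y)$ and is squarefree precisely because $p \nmid n$; the hypothesised $\zeta \in \mathbb{F}_q\setminus\{1\}$ supplies a Galois-fixed linear form $x-\zeta y$ dividing $T$ exactly once; and the multiplicativity of top forms then pins the absolutely irreducible factor containing $x - \zeta y$ to a singleton Galois orbit. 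The two points you flag as needing care are genuine but routine: passing from ``$\sigma(g_{i_0})$ associate to $g_{i_0}$ for all $\sigma$'' to ``$g_{i_0}$ may be normalised to lie in $\mathbb{F}_q[x,y]$'' uses that finite fields are perfect (fix the coefficient of a chosen monomial to be $1$), and the fact that a factor in $\overline{\mathbb{F}}_q[x,y]$ of a polynomial in $\mathbb{F}_q[x,y]$ with Galois-stable associate class divides it already over $\mathbb{F}_q$ follows by applying $\sigma$ to the cofactor. With those details supplied, your write-up is a complete and self-contained proof of a statement the paper leaves to the literature.
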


\begin{corollary}
Suppose that $n$ is positive and even, $q \geq n^4$ and $\gcd(n,q)=1$. Then there is no \PP\ of \fq\ of degree $n$.
\end{corollary}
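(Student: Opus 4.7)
The plan is to derive this corollary as a direct consequence of \tref{thm:nopps_rootof1}. It suffices to verify that under the hypotheses ($n$ positive and even, $q \geq n^4$, $\gcd(n,q)=1$), the field \fq\ actually contains an $n^{th}$ root of unity different from $1$; once this is established, \tref{thm:nopps_rootof1} applies immediately.

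First I would show that \fq\ has odd characteristic. Since $q=p^r$ and $\gcd(n,q)=1$, the prime $p$ does not divide $n$; but $n$ is even, so $p \neq 2$. Therefore $p$ is odd and $-1 \neq 1$ in \fq.

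Next I would observe that $-1$ is the required root of unity. Because $n$ is even we have $(-1)^n = 1$, so $-1$ is an $n^{th}$ root of unity, and by the previous step it is distinct from $1$. Hence \fq\ contains an $n^{th}$ root of unity different from $1$, and \tref{thm:nopps_rootof1} now gives the conclusion that no \PP\ of \fq\ of degree $n$ exists.

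There is essentially no obstacle here: the corollary is a clean specialisation of \tref{thm:nopps_rootof1}, and the only small subtlety is noting that the parity hypothesis on $n$ combined with $\gcd(n,q)=1$ forces the characteristic to be odd, which is exactly what guarantees that $-1$ serves as a nontrivial $n^{th}$ root of unity.
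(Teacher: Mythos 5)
Your proof is correct and matches the paper's approach exactly: the paper's proof is the one-line ``Let $\zeta=-1$ in Theorem~\ref{thm:nopps_rootof1},'' and you have simply spelled out the verification that $-1$ is indeed a nontrivial $n^{th}$ root of unity under the stated hypotheses.
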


\begin{proof}
Let $\zeta=-1$ in \tref{thm:nopps_rootof1}.
\end{proof}

\begin{corollary}
Suppose that $q \geq n^4$ and $\gcd(n,q)=1$. Then there exists a \PP\ of \fq\ of degree $n$ if and only if $\gcd(n,q-1)=1$.
\end{corollary}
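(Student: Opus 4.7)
The plan is to prove the two implications separately. The reverse direction is immediate from \tref{thm:elementary}, while the forward direction is essentially a repackaging of \tref{thm:nopps_rootof1}.

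For the reverse direction, suppose $\gcd(n,q-1)=1$. Then by \tref{thm:elementary}(2) the monomial $x^n$ is a \PP\ of \fq, producing a \PP\ of degree $n$ as required.

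For the forward direction, I would argue by contrapositive: assume $d:=\gcd(n,q-1)>1$ and derive that no \PP\ of \fq\ of degree $n$ exists. The key observation is that since $\mfq^{\times}$ is cyclic of order $q-1$ and $d$ divides $q-1$, there is an element $\zeta\in\mfq^{\times}$ of multiplicative order exactly $d$. Because $d\mid n$, this $\zeta$ is an $n^{th}$ root of unity, and since $d>1$ we have $\zeta\neq 1$. The hypotheses $q\geq n^4$ and $\gcd(n,q)=1$ are inherited from the corollary's assumptions, so all of the hypotheses of \tref{thm:nopps_rootof1} are in force, and that theorem concludes that no \PP\ of \fq\ of degree $n$ exists.

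There is no real obstacle beyond checking these hypotheses transfer cleanly. The only delicate point is noting that $\gcd(n,q-1)=d>1$ genuinely produces a nontrivial $n^{th}$ root of unity in \fq\ (rather than merely in some extension), which is guaranteed by the cyclicity of $\mfq^{\times}$ together with $d\mid q-1$. Everything else is bookkeeping.
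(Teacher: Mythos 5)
Your proposal is correct and takes essentially the same approach as the paper: the reverse direction via the monomial $x^n$ and \tref{thm:elementary}, and the forward direction by producing a nontrivial $n^{th}$ root of unity from $\gcd(n,q-1)=d>1$ (the paper writes it explicitly as $g^{(q-1)/d}$ for a primitive element $g$) and invoking \tref{thm:nopps_rootof1}.
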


\begin{proof}
If $\gcd(n,q-1)=1$ then the monomial $x^n$ is a \PP\ of \fq\ by \tref{thm:elementary}. Conversely, suppose that $\gcd(n,q-1)=d>1$. Since the multiplicative group $\mfq^{\times}$ is cyclic of order $q-1$ it follows that $g^{(q-1)/d}$ is an $n^{th}$ root of unity different to 1, where $g$ is a primitive element of \fq.
\end{proof}

\section{A Conjecture of Carlitz}
In an invited address before the Mathematics Association of America in 1966, Professor L. Carlitz made the following conjecture:

\begin{prop}{\emph{\bf (Carlitz conjecture).}}\label{prop:carlitz}
For every even positive integer $n$, there is a constant $c_n$ such that for each finite field of odd order $q > c_n$, there does not exist a \PP\ of \fq\ of degree $n$.
\end{prop}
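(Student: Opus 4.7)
The plan is to reduce the problem, via \tref{thm:ppsexceptional}, to a classification of exceptional polynomials of even degree over fields of odd characteristic, and then invoke the deep work of Fried, Guralnick and Saxl for the residual case.

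Write $q = p^r$ with $p$ odd, fix an even integer $n$, and suppose for contradiction that $f \in \mfqx$ is a \PP\ of degree $n$ with $q$ large. First I would reduce to the separable case: if $f'(x) = 0$, write $f(x) = F(x^{p^t})$ with $t \geq 1$ maximal and $F$ separable. Then $F$ is itself a \PP, of degree $n/p^t$, and because $p$ is odd this quotient retains the full $2$-adic valuation of $n$ and so remains even. It therefore suffices to treat separable $f$, in which case \tref{thm:ppsexceptional} (applied once $q \geq n^4$) promotes $f$ to an exceptional polynomial over \fq.

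If $\gcd(n, q) = 1$, the corollary immediately following \tref{thm:nopps_rootof1} (applied with $\zeta = -1$) already yields the desired contradiction for $q \geq n^4$. The remaining case is $p \mid n$, which restricts $p$ to one of the finitely many odd primes dividing $n$; for each such $p$ one must preclude exceptional polynomials of degree $n$ over $\mathbb{F}_{p^r}$ for large $r$. For this I would decompose $f = f_1 \circ \cdots \circ f_k$ into functional indecomposables in $\mfqx$. It is standard that exceptionality is inherited by the components, so the fact that $\prod \deg f_i = n$ is even forces some $f_i$ to be simultaneously indecomposable, exceptional, and of even degree. Thus the entire proposition reduces to showing that, for sufficiently large odd $q$, no indecomposable exceptional polynomial over \fq\ can have even degree.

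The hard part is precisely this last step, which is the core content of the Fried--Guralnick--Saxl theorem. The strategy is to attach to the indecomposable exceptional $f$ the arithmetic monodromy group $G$ and the geometric monodromy group $\bar G \trianglelefteq G$ of $f(x) - t$ over $\mathbb{F}_q(t)$, acting faithfully on the $n$ roots. Indecomposability makes this action primitive, while exceptionality translates into a precise group-theoretic condition relating the orbits of $\bar G$ and of $G$ on pairs of distinct roots. Applying the classification of finite simple groups to primitive permutation groups satisfying this condition, one concludes that $n$ must be a power of the characteristic $p$; since $p$ is odd this is incompatible with $n$ being even, and the proposition follows with $c_n$ taken to be the larger of $n^4$ and the quantitative thresholds that emerge from the group-theoretic argument.
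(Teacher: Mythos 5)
Your proposal follows essentially the same route as the paper's proof of \tref{thm:carlitz}: reduce to the separable case (noting the degree stays even since $p$ is odd), apply \tref{thm:ppsexceptional} for $q \geq n^4$ to conclude $f$ is exceptional, and then invoke the Fried--Guralnick--Saxl result that exceptional polynomials over fields of odd order have odd degree. The extra material you supply --- the $\gcd(n,q)=1$ shortcut and the sketch of the monodromy/CFSG argument behind \tref{thm:fried} --- elaborates on what the paper simply cites as a black box, but does not change the structure of the argument.
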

This proposition was the motivation for many papers and generated much interest over the following three decades. A chronology of the major results leading to the proof of \pref{prop:carlitz} is given below. Carlitz' conjecture was finally resolved in the affirmative by Fried, Guralnick and Saxl in 1993. They used the classification of finite simple groups to prove, in particular, the following theorem.
\begin{theorem}{\bf (Fried \textit{et al.}).}\label{thm:fried}
If $q$ is odd then every exceptional polynomial over \fq\ has odd degree.
\end{theorem}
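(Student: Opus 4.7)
The plan is to reformulate exceptionality in terms of the Galois theory of function fields and then invoke the classification of finite simple groups. First I would reduce to the case where $f$ is indecomposable: any decomposition $f = g \circ h$ with $\deg g, \deg h \geq 2$ yields an exceptional polynomial iff both factors are exceptional (over suitable fields), so it suffices to establish the odd-degree conclusion for indecomposable $f$, and one can further pass to a separable $f$ (Example~\ref{eg:nonexceptionalpp} shows we must at least assume separability, but the ``inner'' $p$-powering can be absorbed, and for $q$ odd the $p$-power maps contribute odd-degree factors only).

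Next, for a separable $f$ of degree $n$, let $t$ be transcendental over \fq\ and consider $f(x) - t \in \mfq(t)[x]$. Let $E$ denote its splitting field over $\mfq(t)$, and define the \emph{arithmetic monodromy group} $A = \mathrm{Gal}(E/\mfq(t))$ and the \emph{geometric monodromy group} $G = \mathrm{Gal}(E\overline{\mfq}/\overline{\mfq}(t))$. Both act faithfully and transitively on the $n$ roots of $f(x)-t$, and $G \trianglelefteq A$ with $A/G$ cyclic (generated by a Frobenius lift). The classical translation, due to Fried building on Davenport--Lewis and Cohen, says that $f$ is exceptional over \fq\ precisely when every $G$-orbit on pairs of distinct roots of $f(x)-t$ is properly refined by some coset of $G$ in $A$; equivalently, the generator of $A/G$ acts fixed-point-freely on the $n$ roots. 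For indecomposable $f$, $G$ additionally acts \emph{primitively} on $n$ points.

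At this stage the problem is purely group-theoretic: classify the pairs $(G, A)$ with $G$ a primitive transitive subgroup of $S_n$, $G \trianglelefteq A \leq S_n$, $A/G$ cyclic, and containing an element of $A\setminus G$ that fixes no point, such that this configuration arises as the monodromy of a cover of $\mathbb{P}^1$ over a field of odd characteristic. Here I would apply the classification of finite simple groups, following the O'Nan--Scott-style case division for primitive groups. The main obstacle, and the heart of the Fried--Guralnick--Saxl argument, is to show that when $n$ is even and $\mathrm{char}(\mfq) = p$ is odd, no such $(G,A)$ survives the CFSG case analysis: affine primitive groups force $n$ to be a power of $p$ (hence odd), almost-simple primitive groups must be examined against the known list of simple groups and their transitive representations, and ``product-type'' cases reduce to smaller ones by the indecomposability reduction. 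The delicate point is ruling out sporadic possibilities and the almost-simple groups of Lie type in odd characteristic, where one uses Aschbacher's theorem on maximal subgroups of classical groups together with the Riemann--Hurwitz formula on the cover to bound genus contributions and thereby eliminate remaining cases.

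Finally, having shown that no admissible $(G,A)$ exists for even $n$ in odd characteristic, the theorem follows: an exceptional polynomial of even degree over \fq\ with $q$ odd would produce such a forbidden pair, contradiction. The conclusion for decomposable $f$ is immediate from the indecomposable case since at least one factor in a composition expressing $f$ must have even degree, forcing a contradiction at that stage of the composition.
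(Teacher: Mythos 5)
The paper does not actually prove this statement: it is quoted as the main theorem of Fried, Guralnick and Saxl \cite{fried1993}, whose proof occupies a long research paper and rests on the classification of finite simple groups. Your proposal correctly identifies the architecture of that proof (reduction to indecomposable separable $f$, passage to the arithmetic and geometric monodromy groups $A$ and $G$ of $f(x)-t$ over $\mfq(t)$, translation of exceptionality into a condition on the pair $(G,A)$, and a CFSG-driven elimination of the admissible primitive pairs when $n$ is even and $p$ is odd). But as written it is a description of a proof, not a proof: the entire mathematical content of the theorem is concentrated in the sentence asserting that ``no such $(G,A)$ survives the CFSG case analysis,'' and none of that analysis is carried out. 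The affine case, the almost-simple case (including the genus estimates via Riemann--Hurwitz and the treatment of groups of Lie type), and the product-type case each require substantial independent arguments; gesturing at Aschbacher's theorem and the O'Nan--Scott division does not discharge them. So the proposal has a genuine gap in the sense that the step doing all the work is asserted rather than proved.

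Two smaller points. First, your group-theoretic criterion for exceptionality is stated incorrectly: the standard equivalence is that every element of the coset of $G$ in $A$ generating $A/G$ fixes \emph{exactly one} of the $n$ roots (this follows from the Cauchy--Schwarz argument applied to the coset version of Burnside's lemma), not that the generator acts fixed-point-freely; a fixed-point-free Frobenius element is instead the signature of $f$ failing to be a permutation polynomial over extensions. Second, the reduction for decomposable $f$ needs more care than ``one factor must have even degree'': in characteristic $p$ one must first strip off the inseparable part $x^{p^t}$ (which, as you note, has odd degree when $q$ is odd) and then use that a composition is exceptional if and only if each composition factor is exceptional, so that an even-degree exceptional $f$ yields an even-degree \emph{indecomposable} exceptional factor. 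These are repairable, but the central CFSG classification remains entirely unaddressed, which is why the paper treats \tref{thm:fried} as an imported black box rather than something to be proved in-house.
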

In light of \tref{thm:ppsexceptional} this confirms the Carlitz conjecture. We state this as a theorem.
\begin{theorem}\label{thm:carlitz}
Let $n$ be a positive even integer and suppose that $q \geq n^4$ is odd. Then there does not exist a \PP\ of \fq\ of degree $n$.
\end{theorem}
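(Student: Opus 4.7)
The plan is to combine the two main inputs that the section has just set up: \tref{thm:ppsexceptional}, which says that for $q \geq n^4$ every separable \PP\ of degree $n$ is exceptional, and \tref{thm:fried}, which says that over a field of odd order every exceptional polynomial has odd degree. Taken together these immediately rule out separable \PP s of even degree as soon as $q \geq n^4$ is odd, so the whole game is the short reduction to the separable case.

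Concretely, I would argue by contradiction: suppose $f \in \mfqx$ is a \PP\ of even degree $n$ with $q = p^r \geq n^4$ and $p$ odd. Write $f(x) = g(x^{p^t})$ with $t \geq 0$ taken maximal, so $g \in \mfqx$ is separable. Set $m = \deg(g) = n/p^t$. Two observations about $g$ are then essentially automatic. First, the map $c \mapsto c^{p^t}$ is an iterated Frobenius and hence a bijection of \fq\ (equivalently, $x^{p^t}$ is a \PP\ by \tref{thm:p_poly}), so $f = g \circ x^{p^t}$ is a \PP\ of \fq\ if and only if $g$ is; thus $g$ is a \PP\ of \fq. Second, because $p$ is odd, $p^t$ is odd, so $n = p^t m$ and $m$ have the same parity, forcing $m$ to be even; in particular $m \geq 2$.

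Now I would apply the two headline theorems to $g$. Since $m \leq n$ we have $m^4 \leq n^4 \leq q$, so \tref{thm:ppsexceptional} applies to the separable \PP\ $g$ and yields that $g$ is exceptional over \fq. But $q$ is odd, so \tref{thm:fried} forces $\deg(g) = m$ to be odd, contradicting $m$ even. Hence no such $f$ exists.

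The hard work is entirely subcontracted to the two cited theorems: \tref{thm:ppsexceptional} rests on the Lang--Weil estimates for rational points on algebraic curves, and \tref{thm:fried} on the classification of finite simple groups. The remaining step of this proof—peeling off the inseparable factor $x^{p^t}$ and tracking parities—is bookkeeping; the only subtle point worth flagging is the need for $p$ to be odd, which is exactly where the hypothesis "$q$ odd" enters to preserve the parity of the degree under the reduction $f \mapsto g$.
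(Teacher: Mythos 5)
Your proposal is correct and follows essentially the same route as the paper's proof: reduce to the separable case by writing $f(x)=g(x^{p^t})$, note that $\deg(g)=n/p^t$ remains even because $p$ is odd and $q\geq(\deg g)^4$ still holds, then apply \tref{thm:ppsexceptional} followed by \tref{thm:fried} to reach a contradiction. Your explicit justification of the parity preservation is a small but welcome addition that the paper leaves implicit.
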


\begin{proof}
Let $n$ be an even positive integer and let $q=p^r \geq n^4$ be odd. Suppose that $f \in \mfqx$ has degree $n$. We may assume that $f$ is separable. For otherwise, write $f(x)=g(x^{p^t})$, where $g'(x) \neq 0$ and $t$ is a positive integer. Then $g \in \mfqx$ is separable of even degree $m=n/p^t$, and $q \geq m^4 = n^4/p^{4t}$. Moreover, $f$ is a \PP\ of \fq\ if and only if $g$ is, so we may replace $f$ by $g$ and $n$ by $m$. Hence, assume that $f$ is separable. If $f$ is a \PP\ then it is exceptional by \tref{thm:ppsexceptional}, but then $n$ is odd by \tref{thm:fried}, a contradiction.
\end{proof}

The following is a timeline of the major results leading to the proof of \tref{thm:carlitz}.
\begin{description}[leftmargin=15mm,style=sameline,itemsep=1pt,font=\normalfont] 
\item[\emph{1897}] Dickson's list (Chapter \ref{chap:deg6}, Table \ref{table:normalised},\cite{dickson1897II}) shows that there are only finitely many fields \fq\ of odd characteristic containing \PP s of degree $n=2,4,6$.
\item[\emph{1966}]  Carlitz presents his conjecture during an address to the MAA.
\item[\emph{1967}] Hayes \cite{hayes1967} proves the conjecture for $n=8,10$ and the general case case $p \nmid n$.
\item[\emph{1973}] Lausch and Nobauer \cite[p.~202]{lausch1973} prove the conjecture for $n=2^m$.
\item[\emph{1987}] Wan \cite{wan1987} proves the conjecture for $n=12,14$ and states an equivalent version in terms of exceptional polynomials.
\item[\emph{1988}] Lidl and Mullen \cite[P9]{lidl1988} feature the Carlitz conjecture as an unsolved problem.
\item[\emph{1990}] Wan \cite{wan1990} proves the conjecture for $n=2r$, where $r$ is an odd prime.
\item[\emph{1991}] Independently to Wan, and almost at the same time, Cohen \cite{cohen1991} proves the case $n=2r$, $r$ an odd prime. In addition, he proves the conjecture for all $n < 1000$. 
\item[\emph{1991}] von zur Gathen \cite{gathen1991} proves \tref{thm:ppsexceptional}, allowing the Carlitz conjecture to be stated quantitatively.
\item[\emph{1993}] Carlitz's conjecture is proven in general by Fried, Guralnick and Saxl \cite{fried1993}.
\end{description}

\begin{figure}[h!]
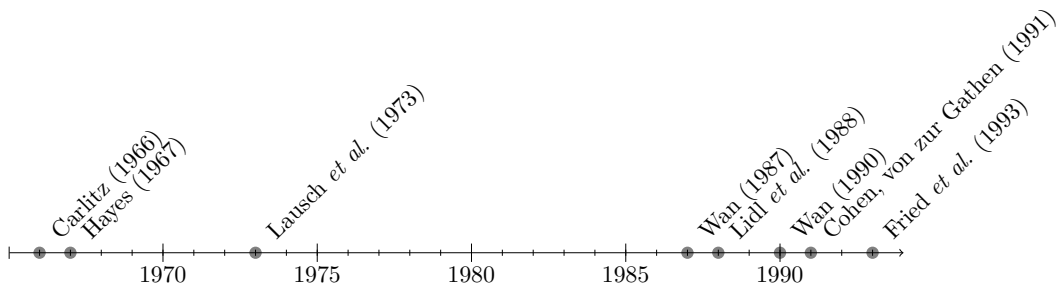

\begin{chronology}[5]{1965}{1993}{3ex}{\textwidth}
\event{1966}{Carlitz (1966)}
\event{1967}{Hayes (1967)}
\event{1973}{Lausch \emph{et al.} (1973)}
\event{1987}{Wan (1987)}
\event{1988}{Lidl \emph{et al.} (1988)}
\event{1990}{Wan (1990)}
\event{1991}{Cohen, von zur Gathen (1991)}
\event{1993}{Fried \emph{et al.} (1993)}
\end{chronology}
\caption{\emph{A timeline of major results leading to the proof of the Carlitz conjecture.}}
\end{figure}

\section{Wan's Generalisation}\label{sec:wan}
Coinciding with the time that Fried \emph{et al.} proved Carlitz's original conjecture, in 1993 Wan proposed the following generalisation \cite{wan1993}.
\begin{prop}{\emph{\bf (Carlitz-Wan conjecture).}}\label{prop:carlitz_wan}
Let $q > n^4$. If $\gcd(n,q-1)>1$, then there are no \PP s of degree $n$ over \fq.
\end{prop}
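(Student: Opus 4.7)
The strategy is to split into cases according to whether $p$ divides $n$ and, in the divisible case, to reduce to a separable polynomial whose properties are controlled by \tref{thm:ppsexceptional}. First, since $\mfq^{\times}$ is cyclic of order $q-1$, the hypothesis $\gcd(n,q-1)>1$ is equivalent to saying that \fq\ contains a nontrivial $n^{th}$ root of unity. If $p\nmid n$ then $\gcd(n,q)=1$, and the conclusion is immediate from the corollary following \tref{thm:nopps_rootof1} (or equivalently \tref{thm:nopps_rootof1} itself, applied with any nontrivial $n^{th}$ root of unity in \fq).

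Now suppose $p\mid n$, say $n=p^t m$ with $\gcd(m,p)=1$ and $t\geq 1$; since $\gcd(p,q-1)=1$ we have $\gcd(n,q-1)=\gcd(m,q-1)>1$. Let $f\in\mfqx$ be a \PP\ of degree $n$. If $f$ is not separable, iterating the observation that $f'=0$ forces every exponent of $f$ to be divisible by $p$, we may write $f(x)=g(x^{p^s})$ with $g$ separable and $s\geq 1$ chosen maximally. Because $c\mapsto c^{p^s}$ is a bijection of \fq, $g$ is itself a \PP, and $\gcd(\deg g,\,q-1)=\gcd(n,q-1)>1$ while $q>n^4\geq(\deg g)^4$. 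We may therefore replace $f$ by $g$ and assume from the outset that $f$ is separable of some degree $n'=n/p^s$ with $s\geq 0$.

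At this point \tref{thm:ppsexceptional} applies: $f$ is a separable \PP\ with $q\geq (n')^4$, so $f$ is \emph{exceptional} over \fq. If $p\nmid n'$, then \tref{thm:nopps_rootof1} applied to $n'$ again produces the contradiction, so we may assume $p\mid n'$ (which can happen even with $f$ separable, as the example $x^p+x$ shows). The remaining ingredient is the strengthening of \tref{thm:fried} that emerges from the Fried--Guralnick--Saxl classification: every exceptional polynomial over \fq\ of degree $n'$ satisfies $\gcd(n',q-1)=1$. Combined with $\gcd(n',q-1)=\gcd(n,q-1)>1$, this gives the desired contradiction.

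The principal obstacle is precisely this last step. \tref{thm:fried} as stated handles only the case $q$ odd and $n'$ even, whereas \pref{prop:carlitz_wan} requires ruling out every prime divisor of $\gcd(n',q-1)$, including odd primes and the characteristic-$2$ case. The sharper statement follows from the full Fried--Guralnick--Saxl analysis of the arithmetic and geometric monodromy groups of the cover defined by $f(x)-y$, and ultimately rests on the classification of finite simple groups. I would cite this deep group-theoretic input as a black box; no more elementary argument seems available from the tools developed so far in the excerpt.
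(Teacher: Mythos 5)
Your proof is correct and follows essentially the same route as the paper: reduce to a separable polynomial, invoke \tref{thm:ppsexceptional} to conclude exceptionality, and then rule out exceptional polynomials of degree $n'$ with $\gcd(n',q-1)>1$. The ``deep group-theoretic input'' you flag as the principal obstacle is precisely Lenstra's theorem, which the paper states as \tref{thm:lenstra} immediately before this result and likewise treats as a black box, so your separate handling of the case $p\nmid n$ via \tref{thm:nopps_rootof1} is an unnecessary detour.
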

Recall that if $\gcd(n,q-1)=1$ then there exist \PP s of degree $n$ (for example the monomial $x^n$). \pref{prop:carlitz_wan} can be interpreted as a partial converse of this statement; that is, if $q > n^4$ then there exist \PP s of degree $n$ if and only if $\gcd(n,q-1)=1$. In the special case that $n$ is even and $q$ is odd, \pref{prop:carlitz_wan} reduces to the Carlitz conjecture (\pref{prop:carlitz}).

The work by Fried \emph{et al.} in \cite{fried1993}, which proved Carlitz's original conjecture, also proved the Carlitz-Wan conjecture for fields of characteristic $p>3$. The remaining special cases did not remain unresolved for long, for the following theorem by Lenstra implies \pref{prop:carlitz_wan} in full generality. See \cite{cohen1995} for a discussion of Lenstra's proof and an elementary version.
\begin{theorem}{\bf (Lenstra).}\label{thm:lenstra} 
Suppose $\gcd(n,q-1)>1$. Then there is no exceptional polynomial of degree $n$ over \fq.
\end{theorem}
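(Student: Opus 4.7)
The plan is to prove the contrapositive: given an exceptional $f \in \mfqx$ of degree $n$, I will show $\gcd(n, q-1) = 1$. By \tref{thm:all_exceptional_pps}, such an $f$ is automatically a permutation polynomial, which supplies crucial extra information. As in the proof of \tref{thm:carlitz}, I may assume $f$ is separable: otherwise write $f(x) = g(x^{p^t})$ with $g$ separable of degree $m = n/p^t$; then $\gcd(m, q-1) = \gcd(n, q-1)$ because $p \nmid q-1$, and $f$ is exceptional (resp.\ a PP) if and only if $g$ is, so we may replace $f$ by $g$. Suppose for contradiction that $\ell$ is a prime dividing $\gcd(n, q-1)$; then \fq\ contains a primitive $\ell$-th root of unity.

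The main tool is the Galois theory of the cover $x \mapsto f(x)$. Let $L$ denote the splitting field of $f(X) - T$ over $\mfq(T)$, let $\Omega$ be the $n$-element set of its roots, and set
$$A = \mathrm{Gal}(L/\mfq(T)), \qquad G = \mathrm{Gal}(L/(L \cap \overline{\mfq}(T))),$$
the arithmetic and geometric monodromy groups, respectively. Then $G \trianglelefteq A$ with $A/G$ cyclic, generated by a Frobenius coset $\phi$, and both groups act transitively on $\Omega$. Under the standard dictionary, irreducible factors of $\Phi(x,y) = (f(x)-f(y))/(x-y)$ in $\mfq[x,y]$ correspond to $A$-orbits on $\Omega \times \Omega \setminus \Delta$, and such a factor is absolutely irreducible over \fq\ precisely when the corresponding $A$-orbit is already a single $G$-orbit. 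Thus $f$ is exceptional if and only if no $G$-orbit on $\Omega \times \Omega \setminus \Delta$ is stable under $\phi$.

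This is where $\ell \mid \gcd(n,q-1)$ enters. The ramification of $x \mapsto f(x)$ above $T = \infty$ is tame of index $n$ (by separability of $f$), so the inertia at $\infty$ is cyclic of order $n$, generated by some $\sigma \in G$. The Frobenius acts on this tame inertia by $\sigma \mapsto \sigma^q$; since $\ell \mid q-1$, the element $\tau := \sigma^{n/\ell} \in G$ of order $\ell$ satisfies $\phi\tau\phi^{-1} = \sigma^{qn/\ell} = (\sigma^n)^{(q-1)/\ell} \cdot \sigma^{n/\ell} = \tau$. Because $\ell \mid n$, $\tau$ acts fixed-point-freely on $\Omega$. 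Finally, since $f$ is a permutation polynomial, a Chebotarev-style specialization argument shows that $\phi$ has a fixed point $\alpha_0 \in \Omega$; then $(\alpha_0, \tau\alpha_0) \in \Omega \times \Omega \setminus \Delta$ is itself $\phi$-fixed (using $\phi\tau = \tau\phi$), so the $G$-orbit through it is $\phi$-stable, contradicting exceptionality. The main obstacle is making the Chebotarev specialization and the tame-ramification structure rigorous in this setting; these are standard in the theory of function field covers but lie outside the framework developed in this chapter.
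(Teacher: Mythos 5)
The first thing to say is that the paper does not prove this theorem at all: it is quoted from the literature, with the reader referred to Cohen's 1995 article for Lenstra's proof and an elementary version of it. So your proposal cannot be compared with an in-paper argument and must stand on its own; as written it has a fatal gap. The decisive step is your claim that the ramification of $x \mapsto f(x)$ above $T=\infty$ is \emph{tame} of index $n$ ``by separability of $f$''. Tameness at infinity requires $p \nmid n$, not $f'\neq 0$: the polynomial $f(x)=x^{2p}+x$ has $f'=1$ but ramification index $2p$ at infinity, which is wild. When $p \mid n$ the inertia group at a place over $\infty$ is an extension of a cyclic prime-to-$p$ group by a nontrivial $p$-group; it is not cyclic of order $n$, there is no $n$-cycle $\sigma$, the element $\tau$ of order $\ell$ in the tame quotient need not act fixed-point-freely on $\Omega$, and the relation $\phi\tau\phi^{-1}=\tau^{q}$ holds only modulo the wild part. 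This is not a corner case you can wave away: the tame case $p\nmid n$ was classical (Davenport--Lewis, MacCluer, Hayes, Cohen), and the whole reason Lenstra's theorem was needed to finish the Carlitz--Wan conjecture is precisely the wild case $p\mid n$ (in characteristics $2$ and $3$, where Fried, Guralnick and Saxl left it open). Your argument, even if completed, proves only the part that was already known.

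Two further problems. First, you require a single element $\phi$ that both normalizes the inertia at infinity (so that $\phi\tau\phi^{-1}=\tau$) and fixes a point of $\Omega$ (via the Chebotarev specialization). These are different Frobenius elements, attached to different places; they agree only modulo $G$, and replacing $\phi$ by $\phi g$ with $g\in G$ destroys the identity $\phi\tau\phi^{-1}=\tau$, so the pair $(\alpha_0,\tau\alpha_0)$ need not be fixed by the $\phi$ you actually have. The standard tame argument gets around this by counting over the whole coset $\phi G$: exceptionality forces every element of $\phi G$ to fix at most one point of $\Omega$, while the average number of fixed points over the coset is exactly one, and one then compares the elements $\phi_{\infty}\tau^{i}$ for $0\leq i<\ell$. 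Second, your reduction to the separable case asserts that $f(x)=g(x^{p^{t}})$ is exceptional if and only if $g$ is; in fact for $t>0$ the polynomial $\Phi_f(x,y)$ acquires the factor $(x-y)^{p^{t}-1}$, whose irreducible factor $x-y$ is absolutely irreducible, so such an $f$ is \emph{never} exceptional --- exactly as in \egref{eg:nonexceptionalpp}. The conclusion that an exceptional polynomial is separable is true, but for the opposite reason to the one you give, and the equivalence you state is false.
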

We state Wan's generalisation of the Carlitz conjecture as a theorem.
\begin{theorem}\label{thm:carlitz_wan}
Let $q > n^4$. If $\gcd(n,q-1)>1$, then there are no \PP s of degree $n$ over \fq.
\end{theorem}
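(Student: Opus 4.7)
The plan is to mirror the proof of \tref{thm:carlitz}, substituting Lenstra's \tref{thm:lenstra} in place of the Fried--Guralnick--Saxl result (\tref{thm:fried}). Suppose, for contradiction, that $f \in \mfqx$ is a \PP\ of degree $n$ with $q > n^4$ and $\gcd(n,q-1) > 1$. The only nontrivial hypothesis of \tref{thm:ppsexceptional} that $f$ might fail to satisfy is separability, so the first step is a reduction to the separable case.

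To carry out this reduction, I would write $f(x) = g(x^{p^t})$ where $t \geq 0$ is maximal with this property, so that $g \in \mfqx$ is separable of degree $m = n/p^t$. As noted in the paragraph following the definition of separability, $f$ is a \PP\ of \fq\ if and only if $g$ is. I must then verify that the hypotheses transfer to $g$. Since $q = p^r$ we have $q - 1 \equiv -1 \mod p$, hence $\gcd(p^t, q-1) = 1$, and therefore
\[
\gcd(m, q-1) = \gcd(p^t \cdot m, \, q-1) = \gcd(n, q-1) > 1.
\]
Also $m \leq n$ gives $q > n^4 \geq m^4$. Thus $g$ is a separable \PP\ of degree $m$ over \fq, with $q \geq m^4$ and $\gcd(m, q-1) > 1$, and we may replace $(f,n)$ by $(g,m)$.

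At this point \tref{thm:ppsexceptional} applies to $g$, giving that $g$ is exceptional over \fq. On the other hand, $\gcd(m, q-1) > 1$ together with Lenstra's \tref{thm:lenstra} asserts that no exceptional polynomial of degree $m$ over \fq\ exists. These two conclusions contradict each other, completing the argument.

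Since both \tref{thm:ppsexceptional} and \tref{thm:lenstra} are being imported as black boxes, there is no real obstacle internal to this proof; the only subtlety is the bookkeeping in the separability reduction above, specifically the observation $\gcd(p^t, q-1) = 1$ which ensures the coprimality hypothesis survives passing from $n$ to $m$. The deeper content of the theorem lies entirely in Lenstra's result, whose proof (as the excerpt indicates) is discussed in \cite{cohen1995} and is where the real work happens.
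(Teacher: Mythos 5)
Your proof is correct and follows essentially the same route as the paper: reduce to the separable case as in the proof of \tref{thm:carlitz}, then combine \tref{thm:ppsexceptional} with \tref{thm:lenstra}. You in fact supply a detail the paper leaves implicit, namely the check that $\gcd(m,q-1)=\gcd(n,q-1)$ because $\gcd(p^t,q-1)=1$, so the coprimality hypothesis survives the reduction.
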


\begin{proof}
Note that as in the proof of \tref{thm:carlitz} we may assume without loss that all polynomials are separable. The result follows from \tref{thm:lenstra} and \tref{thm:ppsexceptional}.
\end{proof}

\section{On a Conjecture of Mullen}\label{sec:mullen}
The following generalisation of Carlitz's conjecture by Mullen appeared in \cite{mullen1991} and is discussed in \cite{wan1991a,turnwald1995,mullen1995,wan1993}. Until now it is an unresolved problem in published literature.

Based on computer calculations, Mullen proposed that if $n$ is even and $q$ is odd and sufficiently large then no polynomial is ``close'' to being a \PP . 
\begin{conj}{\bf (Mullen).}\label{conj:mullen}
If $n$ is even, $q$ is odd with $q>n(n-2)$ and $f \in \mfqx$ has degree $n$, then
\[ | V_f | \leq q - \left\lceil \frac{q-1}{n} \right\rceil . \]
\end{conj}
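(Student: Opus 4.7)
The first observation is that Mullen's conjecture decomposes naturally using Wan's bound (\tref{thm:wan_bound}): if $f \in \mfqx$ has degree $n$ and is \emph{not} a \PP, then Wan's theorem already yields $|V_f| \leq q - \lceil(q-1)/n\rceil$. Consequently, the real content of the conjecture is that under its hypotheses ($n$ even, $q$ odd, $q > n(n-2)$), there is no \PP\ of \fq\ of degree $n$. So the statement is a quantitative strengthening of the Carlitz conjecture (\tref{thm:carlitz}), dropping the lower bound on $q$ from $n^4$ down to $n(n-2)$.

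With this reformulation the plan is to proceed in two ranges. For $q \geq n^4$, the assertion follows immediately from \tref{thm:carlitz}, so the remaining task is the intermediate range $n(n-2) < q < n^4$. In this range I would attempt a direct attack via Hermite's criterion (\tref{thm:hermite}): fix an even $n$ and a candidate $f$ of degree $n$, then exhibit some integer $t$ with $1 \leq t \leq q-2$ and $t \not\equiv 0 \mod p$ for which $\deg \bigl( f(x)^t \mod (x^q-x) \bigr) = q-1$. When $n \mid q-1$ the choice $t = (q-1)/n$ works immediately by \cref{cor:notequiv1}, so it is enough to handle those fields where $\gcd(n,q-1) < n$. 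Since $n$ is even and $q$ is odd, $2 \mid \gcd(n,q-1)$; taking $t = (q-1)/2$ and expanding $f(x)^{(q-1)/2}$ via the multinomial theorem, one would isolate the coefficient of $x^{q-1}$ in the reduction modulo $x^q - x$ and try to argue via a character-sum or resultant-style computation that it cannot vanish for all choices of the coefficients of $f$ once $q$ exceeds $n(n-2)$.

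The main obstacle is that this programme asks for a quantitative improvement of the Carlitz conjecture by roughly two orders of magnitude in $q$. Because the proof of \tref{thm:carlitz} by Fried, Guralnick and Saxl relies on the classification of finite simple groups, improving the quantitative bound all the way down to $q > n(n-2)$ by elementary or character-theoretic means would be very surprising. This strongly suggests that the conjecture may fail somewhere in the intermediate range. Before investing further effort in the proof attempt above, I would therefore first search for counterexamples by examining the smallest cases, notably $n = 6$ with $24 < q < 1296$, using the full classification of degree-$6$ permutation polynomials developed in \chref{chap:deg6}. A single \PP\ of even degree over an odd field in this range would disprove the conjecture outright, and only after ruling out all such candidates would it be worthwhile to push the Hermite-based attack above through in general.
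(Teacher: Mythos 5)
The statement you set out to prove is in fact false, and the paper's treatment of it is a disproof rather than a proof. Your opening reduction is sound and matches the paper's own framing: by \tref{thm:wan_bound} the conjecture is equivalent to the nonexistence of \PP s of even degree $n$ over odd \fq\ with $q > n(n-2)$, and by \tref{thm:carlitz} this holds once $q \geq n^4$. But the Hermite-based attack you sketch for the intermediate range $n(n-2) < q < n^4$ cannot succeed, because a counterexample lives there: for any $a \neq 0$ the polynomial $f(x) = x^6 + ax^5 - a^4x^2 = x^6 + ax^5 + 2a^4x^2 \in \mathbb{F}_{3^3}[x]$ is a \PP\ of $\mathbb{F}_{27}$ (entry (9) of Table \ref{tab:deg6_extrareduced}, established in Section \ref{sec:3k}). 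Here $n = 6$ and $q = 27 > 24 = n(n-2)$, yet $|V_f| = 27$, whereas the conjectured bound would force $|V_f| \leq 27 - \lceil 26/6 \rceil = 22$. Since this $f$ is a genuine \PP, every admissible power in Hermite's criterion is satisfied, so no choice of $t$ can produce the obstruction you are hoping for; the attack fails not for a technical reason but because the claim is simply false at $q = 27$.

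Your closing paragraph, however, is exactly the right instinct and is precisely what the paper does: the only viable move is a counterexample search among degree-$6$ \PP s over odd fields with $q > 24$, and the classification of \chref{chap:deg6} shows the only such field is $\mathbb{F}_{3^3}$, which supplies the refutation above. Had you carried out that search before investing in the Hermite programme, you would have landed on the paper's resolution. The salvageable positive statement is recorded in the paper as \tref{thm:generalised_mullen}: the conclusion of the conjecture does hold whenever $\gcd(n,q-1) > 1$ and $q > n^4$, with the true threshold on $q$ known only to lie somewhere between $n(n-2)$ and $n^4$.
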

In light of \tref{thm:wan_bound} and \tref{thm:carlitz} (both appearing after Mullen's conjecture) we know this to be true for all $q \geq n^4$. We present a counterexample to Mullen's conjecture as stated. Let $a$ be an arbitrary nonzero element of $\mathbb{F}_{3^3}$ and consider the polynomial
\[ f(x)= x^6+a x^5-a^4 x^2 \in \mathbb{F}_{3^3}[x]. \]
Then $f$ is a \PP\ of $\mathbb{F}_{3^3}$, as proved in Section \ref{sec:ppsF9} and \cite{dickson1897II}. This contradicts Conjecture \ref{conj:mullen}, because $27=q>n(n-2)=24$, but 
\[ \mid V_f \mid = 27 \not\leq 22=q -\left\lceil \frac{q-1}{n} \right\rceil .\]
Armed with results published after Mullen's conjecture (\tref{thm:wan_bound} and \tref{thm:carlitz_wan}) we can give the following generalisation of Conjecture \ref{conj:mullen}, although the bound on $q$ is considerably weakened. In the special case that $n$ is even and $q$ is odd, this reduces (albeit with a looser bound) to Mullen's conjecture.
\begin{theorem}\label{thm:generalised_mullen}
If $\gcd(n,q-1)>1$ with $q>n^4$ and $f \in \mfqx$ has degree $n$, then
\[ | V_f | \leq q - \left\lceil \frac{q-1}{n} \right\rceil . \]
\end{theorem}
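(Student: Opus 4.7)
The plan is to obtain the result as a direct consequence of two earlier theorems, namely Wan's bound on the value set of a non-permutation polynomial (\tref{thm:wan_bound}) and the Carlitz--Wan theorem (\tref{thm:carlitz_wan}). First I would check that the hypotheses of the Carlitz--Wan theorem are met: we have $q > n^4$ and $\gcd(n, q-1) > 1$, which is exactly what that theorem requires. This immediately gives that $f$ is not a \PP\ of \fq, i.e.\ $|V_f| < q$.

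Next, I would feed this conclusion into Wan's bound. Since $f$ has positive degree $n$ and is not a permutation polynomial of \fq, \tref{thm:wan_bound} applies directly and yields
\[
|V_f| \leq q - \left\lceil \frac{q-1}{n} \right\rceil,
\]
which is precisely the desired inequality.

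There is no genuine obstacle: once the two earlier theorems are in hand, the proof is essentially a single-line invocation of each. The only subtlety worth flagging is the same one addressed in the proof of \tref{thm:carlitz}, namely that separability may need to be handled. However, inspection of the proof of \tref{thm:wan_bound} shows that no separability assumption is used there, and \tref{thm:carlitz_wan}, as already proved in this chapter, is stated without a separability hypothesis (the reduction to the separable case was absorbed into its proof). Consequently, no additional reduction is needed here, and the argument is a clean concatenation of the two quoted results.
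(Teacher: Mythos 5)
Your proposal is correct and is exactly the paper's own argument: the paper proves \tref{thm:generalised_mullen} by simply citing \tref{thm:carlitz_wan} (to conclude $f$ is not a \PP) followed by \tref{thm:wan_bound} (to bound $|V_f|$). Your additional remark that no separability reduction is needed at this stage is accurate and slightly more explicit than the paper, but does not change the route.
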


\begin{proof}
\tref{thm:carlitz_wan} and \tref{thm:wan_bound}.
\end{proof}

It would be interesting if future research could further reduce the bound in \tref{thm:generalised_mullen}; by the above discussion the true bound lies between $n(n-2)$ and $n^4$.

\chapter{Permutation Polynomials of Degree 6}\label{chap:deg6}
In 1897 Leonard Eugene Dickson \cite{dickson1897II} claimed to give, aside from degree 6 polynomials in even characteristic, a \emph{complete list of all reduced quantics of degree $\leq 6$ which are suitable to represent substitutions}. In modern parlance this is a claim to a complete list of normalised permutation polynomials (compare \cite[\textsection16]{dickson1897II} to \dref{def:normalised}). Historically, Dickson's claim has been largely accepted in literature \cite{lidl,hayes1967,wan1987,wan1990,li2010}, however in more recent times some doubts have been cast on this assertion. Though his classification of polynomials of degree less than 6 is still trusted, some authors have questioned the completeness of his characterisation of the degree 6, odd characteristic case. Indeed, \cite{evans1992} refers to this as a `partial list'. 

The main problem with verifying Dickson's claim is that his published proof in \cite{dickson1897II} is very difficult to follow. To his credit, the author did a remarkable job in deriving and solving the necessary sets of long, unfriendly equations without so much as a pocket calculator. However, his long and tricky proof is not easily accessible to the modern mathematician for a number of reasons, the main factor being that his language, notation and terminology are somewhat antiquated 115 years later. Furthermore, as is natural for a paper written before modern computing and printing, there are some unhelpful typographical errors and inconsistent notations. For these reasons, it has not been easy for the modern mathematician to verify Dickson's claim to a complete classification.

In this chapter we recreate in full detail the classical result of Dickson by deriving the full characterisation of degree 6 permutation polynomials in odd characteristic. The aim of this chapter is to finally put to rest the classification problem for permutation polynomials of degree $\leq 6$. Though our general ideas and methods are essentially the same as in \cite{dickson1897II}, we have not attempted to recreate Dickson's proof step-by-step. Indeed, in many ways our proofs are different to those presented in \cite{dickson1897II}. We deliberately give most details, for we feel that many of the rearrangements and tricks used in solving sets of equations are nonobvious. Our goal is a proof that can be easily followed in full detail by those who are unconvinced by Dickson's claim of a complete characterisation. Hopefully the arguments presented are more easily accessible to the modern mathematician than those in the original paper. 

Somewhat surprisingly, we find that the list given in \cite{dickson1897II} is, albeit with minor errors, indeed a full classification. We are, however, able to improve Dickson's list in several ways. In \cite{dickson1897II}, we note that not all \emph{normalised} permutation polynomials of degree 6 in characteristic 3 are listed. Instead, some of Dickson's polynomials have been reduced further than specified in \dref{def:normalised}. We are able to rectify this, and we suggest that confusion over this point is perhaps the reason that Dickson's list has been recently questioned. Furthermore, we clear up some errors in the list, and give a much cleaner parametrisation of one of the entries. In light of a very recent paper by Li \emph{et al.} \cite{li2010}, which lists all degree 6 and 7 \PP s over fields of characteristic 2, this completes the classification problem of \PP s of degree $\leq 6$.

\section{Some General Results}
\subsection{The Multinomial Theorem Modulo $p$}
We begin by defining multinomial coefficients, which the next theorem shows are analogous to the well-known binomial coefficients.

\begin{newdef}
If $t,n,k_1,...,k_n$ are nonnegative integers with $k_1+\cdots+k_n=t$ and $n \geq 2$, then define the {\bf multinomial coefficient} $\left( \begin{smallmatrix} t \\ k_1,k_2,...,k_n \end{smallmatrix} \right)$ to be
\[ 
\begin{pmatrix}
t \\
k_1,k_2,...,k_n
\end{pmatrix}
=
\frac{t!}{k_1!k_2!\cdots k_n!}.
\]
\end{newdef}

The following theorem is known as the multinomial theorem.
\begin{theorem}\label{thm:multinomial}
We have the following expansion:
\begin{equation*}
(x_1 + \cdots + x_n)^t = \sum_{\substack{k_1+ \cdots + k_n=t \\ k_1 \geq 0, \cdots, k_n \geq 0}} 
\begin{pmatrix}
t \\
k_1,...,k_n
\end{pmatrix} 
x_1^{k_1} \cdots x_n^{k_n}.
\end{equation*}
\end{theorem}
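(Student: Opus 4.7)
The plan is to prove the identity by induction on the number of variables $n \geq 2$. The base case $n=2$ is precisely the standard binomial theorem: for $k_1, k_2 \geq 0$ with $k_1 + k_2 = t$ we have $\binom{t}{k_1,k_2} = \frac{t!}{k_1!\,k_2!} = \binom{t}{k_1}$, so the statement collapses to $(x_1+x_2)^t = \sum_{k_1=0}^{t} \binom{t}{k_1} x_1^{k_1} x_2^{t-k_1}$, which we take as known.

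For the inductive step, assume the theorem holds for $n-1$ variables. Group the sum as $(x_1 + \cdots + x_{n-1}) + x_n$ and apply the binomial theorem in the two ``variables'' $(x_1 + \cdots + x_{n-1})$ and $x_n$ to obtain
$$(x_1 + \cdots + x_n)^t = \sum_{k_n=0}^{t} \binom{t}{k_n} (x_1 + \cdots + x_{n-1})^{t-k_n} x_n^{k_n}.$$
The inductive hypothesis then expands each $(x_1 + \cdots + x_{n-1})^{t-k_n}$ as a sum over tuples $(k_1, \ldots, k_{n-1})$ of nonnegative integers with $k_1 + \cdots + k_{n-1} = t - k_n$, with multinomial coefficients $\binom{t-k_n}{k_1,\ldots,k_{n-1}}$.

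What remains is coefficient identification. A direct factorial calculation gives
$$\binom{t}{k_n} \binom{t-k_n}{k_1, \ldots, k_{n-1}} = \frac{t!}{k_n!\,(t-k_n)!} \cdot \frac{(t-k_n)!}{k_1! \cdots k_{n-1}!} = \frac{t!}{k_1!\,k_2!\cdots k_n!} = \binom{t}{k_1, \ldots, k_n},$$
so merging the outer sum over $k_n$ with the inner sum over $(k_1, \ldots, k_{n-1})$ produces a single sum over all $n$-tuples $(k_1, \ldots, k_n)$ of nonnegative integers summing to $t$, as required.

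There is no real obstacle in this argument; the only point where care is needed is the bookkeeping when combining the nested sums into a single sum over compositions of $t$ into $n$ nonnegative parts, and verifying that every such composition is hit exactly once as $k_n$ ranges over $\{0,1,\ldots,t\}$ and $(k_1,\ldots,k_{n-1})$ ranges over compositions of $t-k_n$.
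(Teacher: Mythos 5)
Your proof is correct. Note that the paper states the multinomial theorem without proof, treating it as a classical known fact, so there is no in-paper argument to compare against; your induction on $n$ via the binomial theorem, with the coefficient identity $\binom{t}{k_n}\binom{t-k_n}{k_1,\ldots,k_{n-1}}=\binom{t}{k_1,\ldots,k_n}$, is the standard proof and is complete as written.
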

The following is the multinomial analogue of a classical theorem of Lucas. Its proof can be found in \cite[\textsection14-15]{dickson1897II}.
\begin{theorem}\label{thm:lucas}
Let $p$ be a prime and $k_1,k_2,...,k_n,t$ be nonnegative integers such that $k_1+k_2+\cdots+k_n=t$. Suppose that we have the following $p$-adic expansions:
\begin{align*}
k_i &= b_{i0} + b_{i1}p + b_{i2}p^2 + \cdots +b_{is}p^s \text{ for all } 1 \leq i \leq n, \\
t &= c_0 + c_1p + c_2 p^2 + \cdots + c_s p^s,
\end{align*}
where $0 \leq c_{j},b_{ij} \leq p-1$ for all $0 \leq j \leq s$ and $1 \leq i \leq n$. Then
\[
\begin{pmatrix}
t \\
k_1,k_2,...,k_n
\end{pmatrix}
\not\equiv 0 \mod p \text{ if and only if } 
\sum_{i=1}^n b_{ij} = c_j \text{ for all } 0 \leq j \leq s.
\]
If $\left( \begin{smallmatrix} t \\ k_1,k_2,...,k_n \end{smallmatrix} \right) \not\equiv 0 \mod p$ then we have
\[ 
\begin{pmatrix}
t \\
k_1,k_2,...,k_n
\end{pmatrix}
\equiv
\begin{pmatrix}
c_0 \\
b_{10},b_{20},...,b_{n0}
\end{pmatrix}
\cdots
\begin{pmatrix}
c_s \\
b_{1s},b_{2s},...,b_{ns}
\end{pmatrix}
\mod p.
\]
\end{theorem}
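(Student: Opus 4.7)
The plan is to reduce this to the ordinary multinomial theorem via the Frobenius map. First I would use the base-$p$ expansion $t = c_0 + c_1 p + \cdots + c_s p^s$ to factor
\[ (x_1 + \cdots + x_n)^t = \prod_{j=0}^{s} (x_1 + \cdots + x_n)^{c_j p^j}. \]
Applying the ``freshman's dream'' $(y_1 + \cdots + y_n)^p \equiv y_1^p + \cdots + y_n^p \pmod{p}$ iteratively $j$ times gives
\[ (x_1 + \cdots + x_n)^{c_j p^j} \equiv \bigl( x_1^{p^j} + \cdots + x_n^{p^j} \bigr)^{c_j} \pmod{p}, \]
and \tref{thm:multinomial} expands each factor on the right as
\[ \sum_{b_{1j}+\cdots+b_{nj} = c_j} \begin{pmatrix} c_j \\ b_{1j},\ldots,b_{nj} \end{pmatrix} x_1^{b_{1j} p^j} \cdots x_n^{b_{nj} p^j}. \]

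Next I would multiply these expansions together across $j=0,\ldots,s$ and extract the coefficient of $x_1^{k_1} \cdots x_n^{k_n}$. Modulo $p$, this coefficient equals
\[ \sum_{(b_{ij})} \prod_{j=0}^{s} \begin{pmatrix} c_j \\ b_{1j},\ldots,b_{nj} \end{pmatrix}, \]
where the sum runs over arrays of nonnegative integers $(b_{ij})$ satisfying $\sum_i b_{ij} = c_j$ for each $j$ and $\sum_j b_{ij} p^j = k_i$ for each $i$. On the other hand, the direct expansion of $(x_1 + \cdots + x_n)^t$ via \tref{thm:multinomial} identifies this same coefficient with $\left(\begin{smallmatrix} t \\ k_1,\ldots,k_n \end{smallmatrix}\right)$, so matching coefficients gives the desired congruence.

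The key step is a uniqueness argument to collapse the inner sum to at most one term. Since $c_j \leq p-1$ and the $b_{ij}$ are nonnegative integers summing to $c_j$, we automatically have $0 \leq b_{ij} \leq p-1$; hence for each fixed $i$ the tuple $(b_{i0},\ldots,b_{is})$ appearing in any contributing array must be the unique base-$p$ expansion of $k_i$. Consequently the sum is empty unless the genuine base-$p$ digits of $k_1,\ldots,k_n$ satisfy the ``column sum'' condition $\sum_i b_{ij} = c_j$ for every $j$, and when they do it contains exactly one term, namely the product of multinomials displayed in the statement. Both assertions of the theorem fall out simultaneously.

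I do not anticipate any serious obstacle; the whole argument is a manipulation driven by Frobenius. The one subtle point to watch, which I would emphasise, is that the constraint $c_j \leq p-1$ is exactly what is needed to force each $b_{ij} \leq p-1$ and thereby guarantee that the digits appearing in the sum coincide with the true base-$p$ digits of the $k_i$; without this observation the ``at most one contributing array'' conclusion fails and the formula would have to be stated as a sum rather than a single product.
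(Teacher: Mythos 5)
Your argument is correct, and it is worth noting that the paper itself does not prove this theorem at all --- it defers to \cite[\textsection 14-15]{dickson1897II} --- so your write-up supplies a self-contained proof that the text omits. The route you take (factor $(x_1+\cdots+x_n)^t$ over the base-$p$ digits of $t$, apply the freshman's dream iteratively, expand each factor by \tref{thm:multinomial}, and compare coefficients of $x_1^{k_1}\cdots x_n^{k_n}$) is the standard modern generating-function proof, and your key observation --- that $c_j\le p-1$ forces every $b_{ij}\le p-1$, so any contributing array must consist of the genuine base-$p$ digits of the $k_i$ and the inner sum collapses to at most one term --- is exactly the right uniqueness argument. The one step you should make explicit is the forward half of the ``if and only if'': your computation shows the coefficient is $0$ mod $p$ when some column sum fails and equals $\prod_{j}\binom{c_j}{b_{1j},\ldots,b_{nj}}$ when all column sums hold, but to conclude that the multinomial coefficient is then \emph{nonzero} mod $p$ you must also note that each factor is a unit mod $p$. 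This follows from the identity $\binom{c_j}{b_{1j},\ldots,b_{nj}}\cdot b_{1j}!\cdots b_{nj}!=c_j!$ together with $c_j\le p-1$, which gives $p\nmid c_j!$ and hence $p\nmid\binom{c_j}{b_{1j},\ldots,b_{nj}}$; with that one line added, both assertions of the theorem do indeed fall out of your coefficient comparison.
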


\subsection{A General Restriction on Coefficients}
The following theorem shows that any normalised \PP\ of degree $n$ of \fq, where $q \equiv -1 \mod n$, has no $x^{n-2}$ term.
\begin{theorem}\label{thm:thirdhighestcoeffzero}
Let 
\[ f(x) = x^n+a_{n-2} x^{n-2}+\cdots+ a_1 x \in \mfqx \]
be a normalised \PP\ of \fq, where $3\leq n \leq q-2$ and $q=p^r\equiv-1 \mod n$. Then $a_{n-2}=0$.
\end{theorem}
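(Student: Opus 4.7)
The strategy is a direct application of Hermite's criterion (Theorem 1.4) with the carefully chosen exponent $t = m$, where $m := (q+1)/n$. Since $q \equiv -1 \pmod{n}$, this $m$ is a positive integer. First I would verify that $m$ is an admissible value in Hermite's criterion. The hypothesis $3 \leq n \leq q-2$ gives $2 \leq m \leq (q+1)/3 \leq q-2$ (using $q \geq 5$). Moreover, if $p \mid m$ then $p \mid nm = q+1$, forcing $p \mid 1$, a contradiction; so $m \not\equiv 0 \pmod{p}$. As a byproduct, $p \nmid n$ as well, which is consistent with the normalised assumption that the $x^{n-1}$ coefficient vanishes.

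The heart of the argument is to show that the coefficient of $x^{q-1}$ in the reduction of $f(x)^m$ modulo $x^q-x$ equals $m\,a_{n-2}$. Expanding $f(x)^m$ by the multinomial theorem, each monomial arising from the expansion has degree $n k_n + \sum_{i=1}^{n-2} i\, k_i$, and since $k_n + \sum_{i=1}^{n-2} k_i = m$, this degree is $nm - \sum_{i=1}^{n-2} (n-i)k_i$. The largest degree appearing in $f(x)^m$ is $nm = q+1$, and the inequality $nm = q+1 < 2(q-1)$ (valid for $q \geq 4$, hence for us) ensures that no term of degree $j \geq q$ reduces to $x^{q-1}$; so the only contribution comes from terms of degree exactly $q-1 = nm - 2$. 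This forces $\sum_{i=1}^{n-2}(n-i)k_i = 2$ with $k_i \geq 0$. The weights $n-i$ range over $\{2, 3, \ldots, n-1\}$, so the unique nonnegative solution is $k_{n-2} = 1$, all other $k_i = 0$ (for $i < n-2$), and consequently $k_n = m-1$. The corresponding multinomial term is
\[
\binom{m}{m-1,\,1,\,0,\ldots,0}\,(x^n)^{m-1}(a_{n-2}x^{n-2}) = m\, a_{n-2}\, x^{q-1},
\]
confirming the claimed coefficient.

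Since $f$ is a \PP\ and $m$ satisfies the hypotheses of Hermite's criterion, the reduction of $f(x)^m$ modulo $x^q-x$ must have degree $\leq q-2$, so $m\, a_{n-2} = 0$ in \fq. Combined with $m \not\equiv 0 \pmod{p}$, this yields $a_{n-2} = 0$. The main obstacle in this plan is purely bookkeeping: one must correctly identify the unique multinomial configuration contributing to degree $q-1$, and confirm via the inequality $nm < 2(q-1)$ that no higher-degree monomials collapse onto $x^{q-1}$ under reduction. Once these two numerical facts are in place, the rest is immediate from Hermite's criterion.
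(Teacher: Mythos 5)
Your proposal is correct and follows essentially the same route as the paper: apply Hermite's criterion with the exponent $m=(q+1)/n$, verify $1\leq m\leq q-2$ and $p\nmid m$, and use the multinomial theorem to show the coefficient of $x^{q-1}$ in $f(x)^m$ is exactly $m\,a_{n-2}$. Your bookkeeping via $\sum_{i}(n-i)k_i=2$ is a slightly slicker way to isolate the unique contributing term than the paper's case analysis on $k_n$, but the argument is otherwise identical.
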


\begin{proof}
Note that $p^r \equiv -1 \mod n$ implies that $p \nmid n$, so $f$ is indeed the general form for a normalised \PP\ of degree $n$ (\dref{def:normalised}).

Write $q=nm-1$; then it is clear that $m=(q+1)/n \not\equiv 0 \mod p$. We also have
\[ 1< m=\frac{q+1}{n} \leq q-2, \]
because $1 < (q+1)/n$ is equivalent to $q > n-1$ and $(q+1)/n \leq q-2$ is equivalent to $q \geq (2n+1)/(n-1)$, and both of these conditions hold under the assumption $3\leq n \leq q-2$. Hence, by \tref{thm:hermite}, the reduction of $f(x)^m$ modulo $x^q-x$ has degree $\leq q-2$.

To find the coefficient of $x^{q-1}$ in $f(x)^m \mod (x^q-x)$ we are interested in coefficients of terms of the form $x^{i(q-1)}$ in the expansion of $f(x)^m$. But $\deg (f(x)^m)=nm$, and we have, since $nm \geq 6$,
\[ q-1=nm-2<nm<2nm-4=2(q-1). \]
So there are no terms of the form $x^{i(q-1)}$ for $i \geq 2$. 

We use the multinomial theorem to find the coefficient of $x^{q-1}=x^{nm-2}$. By \tref{thm:multinomial} we have
\[
f(x)^{m} = \sum_{\substack{k_1+\cdots+k_{n-2}\\ + k_n=m}} \begin{pmatrix} m \\ k_1,...,k_{n-2},k_n \end{pmatrix} a_1^{k_1} \cdots a_{n-2}^{k_{n-2}} \cdot x^{k_1+\cdots+(n-2)k_{n-2}+ n \cdot k_n}.
\]

To find the coefficient of $x^{q-1}=x^{nm-2}$ we must find all solutions over the nonnegative integers of the following system.
\begin{equation*}
\begin{cases}
k_1+\cdots+k_{n-2}+k_n=m, & (a)\\
k_1+2k_2+\cdots+(n-2)k_{n-2} +n\cdot k_n =nm-2. & (b)
\end{cases}
\end{equation*}
We observe that we must have $k_n=m-1$. For if $k_n=m$ then the \emph{LHS} of $(b)$ is immediately too large. On the other hand, if $k_n < m-1$ then the \emph{LHS} of $(b)$ is too small, for even if $k_{n-2}=m-k_n$ we have
\[ (n-2)(m-k_n)+n\cdot k_n = (n-2)m+2 k_n<n m-2.\]
So we must have $k_n=m-1$, in which case the system reduces to 
\begin{equation*}
\begin{cases}
k_1+\cdots+k_{n-2}=1, & (a)\\
k_1+2k_2+\cdots (n-2)k_{n-2} =n-2. & (b)
\end{cases}
\end{equation*}
The only solution is 
\[ k_1=\cdots=k_{n-3}=0, k_{n-2}=1,k_n=m-1.\]
Thus the coefficient of $x^{q-1}$ in $f(x)^m \mod (x^q-x)$ is $\frac{m!}{(m-1)!} a_{n-2}$, so we have
\[ m \cdot a_{n-2}=0. \]
Since we observed that $m \not \equiv 0 \mod p$ we must have $a_{n-2}=0$. 
\end{proof}

\section{Restrictions on $p$ and $q$}
In this section we determine necessary restrictions on $p$ and $q=p^r$ for \PP s of degree 6 to exist in \fqx. We first note that the affirmatively resolved Carlitz-Wan conjecture (\tref{thm:carlitz_wan}) gives us the upper bound $q \leq 6^4$. In fact we won't assume this theorem because Dickson's original classification in \cite{dickson1897II} claims to show this purely from Hermite's criterion. Indeed, we will find that all degree 6 \PP s of \fq\ satisfy $q \leq 27$. An interesting historical note is that Dickson's characterisation of degree 6 \PP s was used in partial proofs of the Carlitz conjecture \cite{hayes1967,wan1987,wan1990} before the general proof was found by Fried \emph{et al.} in \cite{fried1993}. 

Suppose that $f(x)$ is a degree 6 \PP\ of \fq, where $q$ is odd. To apply Hermite's criterion we will need to treat separately the different residue classes of $q$ modulo 6, for the term of degree $q-1$ in the expansion of $f(x)^t$ depends on the residue class of $q$. If $q=p^r$ then we write $q$ in the form
\[ q=6m+ \mu \text{, with } 0 \leq \mu \leq 5. \]
Now since $q$ is odd by assumption it is impossible that $\mu$ is even. Also, the case $\mu=1$ is impossible by \cref{cor:notequiv1}. So the two cases are $q=6m+3$, in which case $p=3$, and $q=6m+5$, in which case $p \equiv 5 \mod 3$ and $r$ is odd, as the following lemma shows.

\begin{lemma}\label{lem:p5mod6}
We have $p^r \equiv 5 \mod 6$ if and only if $p \equiv 5 \mod 6$ and $r$ is odd.
\end{lemma}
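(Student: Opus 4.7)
The plan is to work modulo $6$ throughout and exploit the fact that $5 \equiv -1 \pmod{6}$, which makes powers of $5$ trivially computable. First I would dispose of the small primes: if $p^r \equiv 5 \pmod{6}$ then $p^r$ is odd and is not divisible by $3$, which immediately rules out $p=2$ and $p=3$. Hence $p$ must be a prime with $\gcd(p,6)=1$, so the only possibilities are $p \equiv 1 \pmod{6}$ and $p \equiv 5 \pmod{6}$.

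Next I would reduce $p^r$ modulo $6$ in each case. If $p \equiv 1 \pmod{6}$ then $p^r \equiv 1 \pmod{6}$ for every positive integer $r$, which is incompatible with $p^r \equiv 5 \pmod{6}$. If $p \equiv 5 \pmod{6}$ then writing $5 \equiv -1 \pmod 6$ gives
\[ p^r \equiv 5^r \equiv (-1)^r \pmod{6}, \]
which equals $1$ when $r$ is even and $5$ when $r$ is odd. This simultaneously proves both directions: the forward implication follows because $p^r \equiv 5 \pmod 6$ now forces $p \equiv 5 \pmod 6$ and $r$ odd, and the reverse implication is the same computation read in the opposite direction.

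There is no real obstacle; the lemma is essentially a parity observation about the multiplicative group $(\mathbb{Z}/6\mathbb{Z})^{\times} = \{1,5\}$, which has order $2$, so the class of $p^r$ modulo $6$ depends only on the class of $p$ and the parity of $r$. The only thing to be careful about is explicitly eliminating $p \in \{2,3\}$ before invoking coprimality with $6$.
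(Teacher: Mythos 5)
Your proof is correct and is essentially the same elementary modular-arithmetic argument as the paper's; the only cosmetic difference is that the paper splits the condition into the two congruences $p^r \equiv 1 \pmod 2$ and $p^r \equiv 2 \pmod 3$, whereas you work directly in $(\mathbb{Z}/6\mathbb{Z})^{\times}$ after excluding $p \in \{2,3\}$. Both routes reduce to the observation that $5 \equiv -1$, so nothing of substance differs.
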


\begin{proof}
Since $5 \equiv -1 \mod 6$ the reverse implication is trivial. Suppose that $p^r \equiv 5 \mod 6$. Then $p^r \equiv 1 \mod 2$ and $p^r \equiv 2 \mod 3$. These conditions imply, respectively, that $p \equiv 1 \mod 2$, and $p \equiv 2 \mod 3$ with $r$ odd. Hence, $p \equiv 5 \mod 6$ and $r$ is odd. 
\end{proof}

\section{Degree 6 PPs of $\mathbb{F}_{6m+5}$}
The aim of this section is to classify all normalised \PP s of degree 6 over finite fields of the form $\mathbb{F}_{6m+5}$. By \dref{def:normalised} and \tref{thm:thirdhighestcoeffzero} such a polynomial has the general form
\begin{equation}\label{eq:normalised6}
f(x) = x^6+a_3 x^3 + a_2 x^2 + a_1 x.
\end{equation}
We remark that we are only interested in finite fields of order $q \geq 11$, because any degree 6 \PP\ of $\mathbb{F}_5$ can be reduced mod $x^5-x$ to a polynomial of degree $\leq 3$ (by \lref{lem:reduction} and \cref{cor:max_degree}). Thus, no \PP\ of $\mathbb{F}_5$ is a true degree 6 polynomial.

For any \PP\ $f(x)$ of the form \eref{eq:normalised6} we now use Hermite's criterion to derive a set of necessary equations in the coefficients $a_1,a_3,a_3$. Since $\deg{(f(x)^m)}=6m=q-5$ and $\deg{(f(x)^{m+1})}=6m+6=q+1$, we observe that $f(x)^{m+1}$ is the first power of $f(x)$ with degree $\geq q-1$. Hence $m+1$ is the first useful power to apply in Hermite's criterion. However, \tref{thm:thirdhighestcoeffzero} ensures that the polynomial \eref{eq:normalised6} always satisfies the power $f(x)^{m+1}$, so we begin by considering the next useful power, namely $f(x)^{m+2}$. We will require the following inequality
\begin{equation}\label{eq:q_inequality}
1 \leq m \leq q-10 \text{ for all } q \geq 11.
\end{equation}

\begin{lemma}\label{lem:threehermites}
Let
\[ f(x) = x^6+ a_3 x^3 + a_2 x^2 + a_1 x \in \mfqx \]
be a \PP\ of \fq, where $q=p^r=6m+5$. If $q\geq 11$ then
\begin{equation}\label{eq:firsthermite}
a_2^2 + 2a_1 a_3 = 0.
\end{equation}
If $q>11$ then 
\begin{equation}\label{eq:secondhermite}
36 a_1^2 a_2 - 15 a_2^2 a_3^2 - 10 a_1 a_3^3 =0.
\end{equation}
If $q>17$ then
\begin{equation}\label{eq:thirdhermite}
72 a_1^4 - 12 a_2^5 - 240 a_1 a_2^3 a_3 - 360 a_1^2 a_2 a_3^2 + 55 a_2^2 a_3^4 + 22 a_1 a_3^5=0.
\end{equation}
\end{lemma}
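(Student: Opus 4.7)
The plan is to apply Hermite's criterion (\tref{thm:hermite}) at the three smallest useful powers of $f$ beyond $f^{m+1}$, namely $t = m+2$, $m+3$, and $m+4$. For each such $t$, the criterion requires that the coefficient of $x^{q-1}$ in the reduction $f(x)^t \bmod (x^q-x)$ vanishes. A monomial $x^n$ in the expansion of $f(x)^t$ contributes to this coefficient precisely when $n$ is a positive multiple of $q-1$. The stated lower bounds on $q$ are chosen exactly so that only $n = q-1$ itself lies in the range $[1, 6t]$: the inequality $2(q-1) > 6t$ reduces to $q \geq 11$ for $t = m+2$, to $q \geq 17$ for $t = m+3$, and to $q \geq 23$ for $t = m+4$.

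The core computation is to extract the coefficient of $x^{q-1}$ in $f(x)^t$ via the multinomial theorem (\tref{thm:multinomial}). Contributing monomials are indexed by nonnegative tuples $(k_1, k_2, k_3, k_6)$ satisfying $k_1+k_2+k_3+k_6 = t$ together with $k_1 + 2k_2 + 3k_3 + 6k_6 = 6m+4$; subtracting gives the simpler constraint $k_2 + 2k_3 + 5k_6 = 6m+4-t$. Parametrising by $j := m-k_6 \geq 0$, nonnegativity of $k_1, k_2, k_3$ forces $j$ into a small finite range, and a direct enumeration yields: for $t=m+2$, the two tuples $(0,2,0,m)$ and $(1,0,1,m)$; for $t=m+3$, the three tuples $(2,1,0,m)$, $(0,2,2,m-1)$, $(1,0,3,m-1)$; and for $t=m+4$, the six tuples $(4,0,0,m)$, $(0,5,0,m-1)$, $(1,3,1,m-1)$, $(2,1,2,m-1)$, $(0,2,4,m-2)$, $(1,0,5,m-2)$.

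Summing the corresponding multinomial coefficients times the monomials in $a_1, a_2, a_3$ and equating to zero produces, for each $t$, an identity in \fq. In each case the expression factors as $(m+1)(m+2) \cdots (m+r) \cdot Q(a_1, a_2, a_3; m) = 0$ for some $r \in \{2, 3, 4\}$ and some $Q$ with rational coefficients. By \lref{lem:p5mod6}, $p \equiv 5 \pmod 6$, so $p \geq 5$; moreover $m \equiv -5/6 \pmod p$, whence $m+j \equiv (6j-5)/6$, and the only primes that could kill one of $m+1,\ldots,m+4$ are $7, 13, 19$, each $\equiv 1 \pmod 6$ and thus excluded. Dividing through and using $6m \equiv -5 \pmod p$ to eliminate $m$ in $Q$, then clearing the small integer denominators that remain (all coprime to $p$ since $p \geq 5$), collapses the identity to exactly \eref{eq:firsthermite}, \eref{eq:secondhermite}, \eref{eq:thirdhermite} respectively. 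Along the way one also verifies the Hermite hypothesis $t \not\equiv 0 \pmod p$, which follows from the same arithmetic since $m+j \not\equiv 0 \pmod p$ for $j = 2, 3, 4$.

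The main obstacle is the bookkeeping in the $t = m+4$ case, where six distinct tuples contribute and the multinomial coefficients involve terms up to $(m+4)(m+3)(m+2)(m+1)m(m-1)/120$; the resulting six-term identity must be manipulated carefully (for instance, by multiplying first by $240$ and then by $36$ and dividing by $5$) to recover \eref{eq:thirdhermite}. A secondary subtlety is the case $p = 5$, where the $120$ in some denominators vanishes modulo $p$: here $q = 5^r$ forces $m \equiv 0 \pmod 5$, so the offending factor of $5$ in the denominator is always cancelled by a factor of $m$ (or $m-1$) in the corresponding numerator. A direct reduction using \tref{thm:lucas}, or verification on a specific prime power such as $q = 125$, confirms that \eref{eq:thirdhermite} still holds modulo $5$.
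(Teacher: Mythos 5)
Your proposal is correct and follows essentially the same route as the paper: Hermite's criterion at the powers $m+2$, $m+3$, $m+4$, enumeration of the same multinomial solution tuples, division by the nonzero product $(m+1)\cdots(m+4)$ (justified by $6j-5\in\{1,7,13,19\}$, none divisible by a prime $\equiv 5 \bmod 6$), and the substitution $m\equiv -5/6 \pmod p$. Your explicit treatment of the $p=5$ case, where the factor of $5$ in the $5!$ denominators is absorbed by $m\equiv 0 \pmod 5$, is in fact more careful than the paper, which performs the formal substitution without comment.
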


\begin{proof}
Note that $m+\frac{5-p}{6}$ and $m+\left( \frac{5-p}{6}+p \right)$ are consecutive multiples of $p$. Since $p \geq 5$ this implies that there are no multiples of $p$ lying strictly  between $m$ and $m+6$; in particular, the integers $m+2,m+3$ and $m+4$ are not divisible by $p$. We also have, by \eref{eq:q_inequality},
\[ 3 \leq m+2,m+3,m+4 \leq q-6. \]
So by \tref{thm:hermite}, the reductions of $f(x)^{m+2},f(x)^{m+3},f(x)^{m+4}$ modulo $x^q-x$ have degree $\leq q-2$. 

First consider the expansion of $f(x)^{m+2}$. We are interested in the coefficient of $x^{q-1}$ in $f(x)^{m+2}\mod (x^q-x)$, which means we must find coefficients of the terms $x^{i(q-1)}$ in $f(x)^{m+2}$. But the highest power of $x$ in $f(x)^{m+2}$ is $6(m+2)=q+7$, so if there were any terms in $x^{i(q-1)}$ with $i \geq 2$ then we would have 
\[ q+7 \geq 2(q-1), \]
which is equivalent to $q \leq 9$. Since $q \geq 11$ there are no such terms, so we only need to consider the coefficient of $x^{q-1}=x^{6m+4}$. 

By \tref{thm:multinomial} we have
\[
f(x)^{m+2} = \sum_{\substack{k_1+k_2+k_3 \\ +k_6=m+2}} \begin{pmatrix} m+2 \\ k_1,k_2,k_3,k_6 \end{pmatrix} a_1^{k_1}a_2^{k_2}a_3^{k_3} \cdot x^{ k_1+2k_2+3k_3+6k_6}.
\]
We must find all solutions over the nonnegative integers of the following system.
\begin{equation*}
\begin{cases}
k_1+k_2+k_3+k_6=m+2, &(a)\\
k_1+2k_2+3k_3+6k_6 =6m+4. & (b)
\end{cases}
\end{equation*}
We give an outline of this routine task. First note that $k_6 \leq m$, for otherwise $k_6 \geq m+1$, in which case $6 k_6 \geq 6m+6$ in contradiction to $(b)$. We must also have $k_6 \geq m$, for otherwise $k_6 \leq m-1$, in which case \emph{LHS} of $(b)$ can be at most, with $k_3=m+2-k_6$,
\[ 3(m+2-k_6)+6k_6=3m+6+3k_6 \leq 6m+3. \]
Hence $k_6=m$, and the system reduces to
\begin{equation*}
\begin{cases}
k_1+k_2+k_3=2, & (a)\\
k_1+2k_2+3k_3 =4, & (b)
\end{cases}
\end{equation*}
which is easily solvable over the finite domain of possibilities. The solutions are 
\[
 \begin{array}{c|c|c|c}
  k_1 & k_2 &k_3 & k_6 \\
  \hline
  	0&2&0&m\\
 	 1 &0&1&m
 \end{array}
\]
Hence, the coefficient of $x^{q-1}$ in $f(x)^{m+2} \mod (x^q-x)$ is 
\begin{align*}
\dfrac{(m+2)!}{2!m!} a_2^2 + \dfrac{(m+2)!}{m!}a_1 a_3 = \dfrac{(m+2)(m+1)}{2} a_2^2 + (m+2)(m+1)a_1 a_3.
\end{align*}
Equating this to zero (by \tref{thm:hermite}) and dividing by $(m+2)(m+1)/2 \neq 0$ we have 
\[ a_2^2 + 2a_1 a_3 =0. \]
	Now suppose that $q>11$, so that $q \geq 17$ since $q \equiv 5 \mod 6$. By a similar process we must find the coefficient of $x^{q-1}$ in $f(x)^{m+3} \mod (x^q-x)$. As before, we note that there are no terms in $f(x)^{m+3}$ of the form $x^{i(q-1)}, i\geq 2$. For the highest power of $x$ in $f(x)^{m+3}$ is $6m+18=q+13$, and $2(q-1)> q+13$ for all $q>15$. So we only need to find the coefficient of $x^{q-1}$.

To find the coefficient of $x^{q-1}=x^{6m+4}$ in $f(x)^{m+3}$ we must solve the following system 
\begin{equation*}
\begin{cases}
k_1+k_2+k_3+k_6=m+3, & (a)\\
k_1+2k_2+3k_3+6k_6 =6m+4. & (b)
\end{cases}
\end{equation*}
By similar reasoning to the previous case we deduce that $m-1 \leq k_6 \leq m$, and in both cases the system is easily solvable. The solutions are 
\[
 \begin{array}{c|c|c|c}
  k_1 & k_2 &k_3 & k_6 \\
  \hline
 	 2 &1&0&m\\
 	 0&2&2&m-1\\
	1&0&3&m-1
 \end{array}
\]
Hence, by \tref{thm:hermite}, we have the identity
\begin{align*}
0&= \dfrac{(m+3)!}{2\cdot m!} a_1^2 a_2 + \dfrac{(m+3)!}{4(m-1)!} a_2^2 a_3^2 + \dfrac{(m+3)!}{6(m-1)!} a_1 a_3^3  \\
&= \frac{(m+3)(m+2)(m+1)}{12} \left[ 6 a_1^2 a_2 +m(3 a_2^2 a_3^2 + 2a_1 a_3^3 )   \right]
\end{align*}
We may divide by $(m+3)(m+2)(m+1)/12 \neq 0$ and substitute $m=-5/6$ (since $m=(p^r-5)/6$). Simplifying, we have
\[ 36 a_1^2 a_2 - 15 a_2^2 a_3^2 - 10 a_1 a_3^3 =0. \]
Finally, suppose that $q>17$ and consider the coefficient of $x^{q-1}$ in $f(x)^{m+4}$ mod $(x^q-x)$. As before we deduce that there are no terms of the form $x^{i(q-1)}, i\geq 2$, so we only need to consider the coefficient of $x^{q-1}$ in $f(x)^{m+4}$. To find this coefficient we must solve
\begin{equation}\label{eq:thirdsystem}
\begin{cases}
k_1+k_2+k_3+k_6=m+4, & (a)\\
k_1+2k_2+3k_3+6k_6 =6m+4. & (b)
\end{cases}
\end{equation}
We have $m-2 \leq k_6 \leq m$, and the solutions are
\[
 \begin{array}{c|c|c|c}
  k_1 & k_2 &k_3 & k_6 \\
  \hline
	4&0&0&m\\
	0&5&0&m-1\\
	1&3&1&m-1\\
	2&1&2&m-1\\
	0&2&4&m-2\\
	1&0&5&m-2
 \end{array}
\]
We must perhaps address the fact that $k_6=m-2$ is not a valid solution to \eref{eq:thirdsystem} if $m <2$. However, since $q > 17$ and $m=(q-5)/6$ we have in fact that $m >2$, so that the given solutions are valid. Hence we have the identity
\[
\frac{(m+4)!}{m!} \frac{a_1^4}{4!}+\frac{(m+4)!}{(m-1)!} \left( \frac{a_2^5}{5!}+\frac{a_1 a_2^3 a_3}{6}+\frac{a_1^2 a_2 a_3^2}{4} \right) + \frac{(m+4)!}{(m-2)!}  \left( \frac{a_2^2 a_3^4}{2\cdot 4!}+\frac{a_1 a_3^5}{5!} \right) =0.
\]
Dividing by $(m+4)(m+3)(m+2)(m+1) \neq 0$, substituting $m=-5/6$ and simplifying we have
\[
\frac{a_1^4}{4!} - \frac{a_2^5}{6 \cdot 4!} -\frac{5 a_1 a_2^3 a_3}{36} - \frac{5 a_1^2 a_2 a_3^2}{24} +  \frac{55 a_2^2 a_3^4}{72 \cdot 4!} + \frac{11 a_1 a_3^5}{36 \cdot 4!}=0.
\]
Multiplying by $72\cdot 4!=2^6 \cdot 3^3 \neq 0$ we have
\[
72 a_1^4 - 12 a_2^5 - 240 a_1 a_2^3 a_3 - 360 a_1^2 a_2 a_3^2 + 55 a_2^2 a_3^4 + 22 a_1 a_3^5=0.
\]
\end{proof}

Armed with the equations derived in \lref{lem:threehermites} our next goal is to prove that there are no degree 6 \PP s of \fq\ if $q > 11$. The following important lemma shows that the linear term of a normalised degree 6 \PP\ of \fq\ is necessarily nonzero.
\begin{lemma}\label{lem:a1notzero}
Let
\[ f(x) = x^6+ a_3 x^3 + a_2 x^2 + a_1 x \in \mfqx \]
be a \PP\ of \fq, where $q=p^r=6m+5$. Then $a_1 \neq 0$.
\end{lemma}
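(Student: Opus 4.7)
The plan is to assume $a_1 = 0$ for contradiction and reduce $f$ to a simple form that cannot possibly be a permutation.

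First, under the assumption $a_1 = 0$, equation \eref{eq:firsthermite} collapses to $a_2^2 = 0$, which forces $a_2 = 0$. So $f$ must take the form $f(x) = x^6 + a_3 x^3 = x^3(x^3 + a_3)$. This is the key simplification: a single application of the first Hermite equation kills both $a_1$ and $a_2$ simultaneously.

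Next I will split into two subcases on $a_3$. If $a_3 = 0$, then $f(x) = x^6$, and by \tref{thm:elementary} this is a \PP\ iff $\gcd(6, q-1) = 1$. But $q - 1 = 6m+4$ is even, so $\gcd(6, q-1) \geq 2$ and $f$ is not a \PP. If $a_3 \neq 0$, I will write $f(x) = h(x^3)$ with $h(y) = y^2 + a_3 y$. The arithmetic fact I need here is that $q - 1 = 6m + 4 \equiv 1 \pmod 3$, so $\gcd(3, q-1) = 1$, and therefore $x \mapsto x^3$ is a permutation of \fq\ by \tref{thm:elementary}. Composing permutations with permutations gives permutations (and conversely), so $f$ is a \PP\ iff $h$ is a \PP.

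Finally, since $q$ is odd, I complete the square to write $h(y) = (y + a_3/2)^2 - (a_3/2)^2$, exhibiting $h$ as the composition of a linear map with $y \mapsto y^2$. As noted in \egref{eg:pp_eg3}, $y^2$ is not onto over a field of odd order (since $-y$ and $y$ have the same square, and there are strictly fewer nonzero squares than nonzero elements), so no function composed with it can be onto. Hence $h$ is not a \PP, yielding the required contradiction.

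I expect no serious obstacle here; the whole argument hinges on the very clean collapse produced by \eref{eq:firsthermite} when $a_1 = 0$, and the remaining work is just the elementary observation that $\gcd(3, q-1) = 1$ in this residue class, reducing to a forbidden quadratic. The only minor care needed is to verify that \eref{eq:firsthermite} is applicable, which follows from the paper's standing assumption $q \geq 11$ in this section.
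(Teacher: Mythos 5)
Your proposal is correct and follows essentially the same route as the paper: use \eref{eq:firsthermite} to force $a_2=0$, reduce $f(x)=x^6+a_3x^3$ to the quadratic $y^2+a_3y$ precomposed with $y=x^3$, and observe that this quadratic is a shifted copy of $y^2$, which cannot permute a field of odd order. Your extra observations (the split on $a_3=0$, and that $x\mapsto x^3$ is itself a permutation since $\gcd(3,q-1)=1$) are harmless refinements of the same argument.
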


\begin{proof}
If $a_1=0$ then \eref{eq:firsthermite} implies that $a_2=0$, so that $f(x)=x^6+a_3 x^3$. Consider the quadratic polynomial $g(x)=x^2+a_3 x \in \mfqx$. By normalisation $g(x)$ is a \PP\ of \fq\ if and only if the monomial $x^2$ is a \PP, which occurs precisely when $3 \mid q$ (\tref{thm:elementary}). Since $p \neq 3$, $g(x)$ is not a \PP\ of \fq, so neither is $g(x^3)=f(x)$. 
\end{proof}

We now show that there are no degree \PP s of \fq\ in the special case $q=17$. 
\begin{theorem}\label{thm:nopps17}
There are no degree 6 \PP s of $\mathbb{F}_{17}$.
\end{theorem}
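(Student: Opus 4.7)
The plan is to use the Hermite-type conditions \eref{eq:firsthermite} and \eref{eq:secondhermite} (both valid for $q = 17$ since $17 > 11$), together with $a_1 \neq 0$ from \lref{lem:a1notzero}, to reduce the set of candidate normalised polynomials to something finite and directly checkable.

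First I would use \eref{eq:firsthermite} together with $a_1 \neq 0$ to solve $a_3 = -a_2^2/(2 a_1)$, substitute this into \eref{eq:secondhermite}, and simplify. The resulting relation should factor as $a_2 (72 a_1^4 - 5 a_2^5) = 0$, splitting the analysis into two cases.

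The case $a_2 = 0$ is easy: it forces $a_3 = 0$, leaving $f(x) = x(x^5 + a_1)$. Since $\gcd(5, 16) = 1$, the equation $x^5 = -a_1$ has a unique nonzero solution in $\mathbb{F}_{17}$, giving $f$ a second root besides $0$, so $f$ is not a PP.

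The case $a_2 \neq 0$ with $a_2^5 = 11 a_1^4$ in $\mathbb{F}_{17}$ is where I expect the main difficulty. One might hope to apply \eref{eq:thirdhermite}, but that equation was proved only for $q > 17$: when $q = 17$ we have $\deg(f^6) = 36 > 32 = 2(q-1)$, so the reduction of $f^6$ modulo $x^{17}-x$ picks up an extra contribution to the coefficient of $x^{16}$ (from the $x^{32}$ term) that is not accounted for in \eref{eq:thirdhermite}. Rather than redo the multinomial expansion, my plan is to exploit scaling: the substitution $x \mapsto \alpha x$ with $\alpha^5 = a_1$ (which exists in $\mathbb{F}_{17}^{\times}$ since $\gcd(5,16)=1$) preserves normalisation and PP status and reduces the problem to the case $a_1 = 1$. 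At that point the constraints force $a_2 = 11^{13} \equiv 7$ and $a_3 = -49/2 \equiv 1$ in $\mathbb{F}_{17}$, collapsing the entire case to the single candidate $f(x) = x^6 + x^3 + 7 x^2 + x$, which I would dispatch immediately by noting that $f(2) = 102 \equiv 0 = f(0)$.
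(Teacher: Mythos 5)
Your proposal is correct, and it finishes the argument by a genuinely different route from the paper. Both proofs begin identically: take the normalised form $f(x)=x^6+a_3x^3+a_2x^2+a_1x$, invoke \eref{eq:firsthermite} and \eref{eq:secondhermite}, use $a_1\neq 0$ (\lref{lem:a1notzero}) to write $a_3=-a_2^2/(2a_1)$, and reduce to $a_2\left(72a_1^4-5a_2^5\right)=0$, i.e.\ $a_2=0$ or $a_2^5=11a_1^4$ in $\mathbb{F}_{17}$. You are also right, and for the right reason, that \eref{eq:thirdhermite} is unavailable at $q=17$: here $\deg(f^6)=36\geq 32=2(q-1)$, so the $x^{32}$ term contributes to the coefficient of $x^{16}$ after reduction, which is precisely why the paper isolates $q=17$ from \tref{thm:nopps>11}. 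The divergence is in how the surviving cases are killed. The paper expands $f(x)^6$ modulo $x^{17}-x$ directly to obtain a third coefficient equation and shows the system of three equations is inconsistent (it terminates in $a_2^4=11$, and $11$ is not a fourth power in $\mathbb{F}_{17}$); the subcase $a_2=0$ also falls out of that third equation. You instead dispatch $a_2=0$ by the elementary observation that $x(x^5+a_1)$ has a second root since the fifth-power map is a bijection of $\mathbb{F}_{17}$, and in the main case you rescale by $g(x)=\alpha^{-6}f(\alpha x)$ with $\alpha^5=a_1$ — which preserves normalisation and \PP\ status, hence all derived constraints — to force $a_1=1$. The constraints then pin down the unique candidate $x^6+x^3+7x^2+x$ (indeed $7^5\equiv 11$ and $-7^2/2\equiv 1$ in $\mathbb{F}_{17}$, and one checks this candidate really does satisfy \eref{eq:firsthermite} and \eref{eq:secondhermite}), which is not injective because $f(2)=102\equiv 0=f(0)$. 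Both finishes are valid: the paper's stays uniformly within the Hermite framework used throughout the chapter at the cost of one more unpleasant multinomial expansion, while yours trades that expansion for a scaling symmetry and a single explicit non-injectivity witness, making it more elementary and easier to verify by hand.
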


\begin{proof}
Suppose that $f(x)$ is a degree 6 \PP\ of $\mathbb{F}_{17}$. By normalisation and \eref{eq:normalised6} we may express $f(x)$ in the form
\[ f(x) = x^6+a_3 x^3+a_2 x^2+a_1 x. \]
Reducing \eref{eq:firsthermite} and \eref{eq:secondhermite} modulo 17, we have
\[
\begin{cases}
a_2^2+2a_1 a_3=0, &(1)\\
2 a_1^2 a_2+2 a_2^2 a_3^2 +7 a_1 a_3^2=0. &(2).
\end{cases}
\]
We use Hermite's criterion to derive a third necessary equation in the coefficients of $f(x)$. By \tref{thm:hermite} the reduction of $f(x)^6$ modulo $x^{17}-x$ has degree $\leq 15$. Upon performing this expansion and equating the coefficient of $x^{16}$ to zero we have
\[ 
15 a_1^4 + 6 a_2 + 6 a_2^5 + a_1 a_2^3 a_3 + 10 a_1^2 a_2 a_3^2 + 15 a_2^2 a_3^4 + 6 a_1 a_3^5 =0. \quad(3)
\]
Since $a_1 \neq 0$ (\lref{lem:a1notzero}) we have by (1) that $a_3=-a_2^2/(2a_1)$. Substituting this into (2) and (3) and multiplying each by a suitable power of $a_1$ we get
\[
\begin{cases}
2 a_1^4 a_2 + 6 a_2^6=0, &(2')\\
15 a_1^8 + 6 a_1^4 a_2 + 8 a_1^4 a_2^5 + 5 a_2^{10}=0. &(3')
\end{cases}
\]
If $a_2=0$ then $(3')$ implies that $a_1=0$ in contradiction to \lref{lem:a1notzero}, so we may assume that $a_2 \neq 0$. Dividing $(2')$ by $a_2$ and simplifying, we have $a_2^5=11a_1^4$. Subsitituing this into $(3')$ we get $a_2 = a_1^4$. Hence we have
\[
\begin{cases}
a_2^5=11a_1^4, &(2'')\\
a_2 = a_1^4. &(3'').
\end{cases}
\]
But dividing $(2'')$ by $(3'')$ gives $a_2^4=11$, and 11 has no fourth root in $\mathbb{F}_{17}$, a contradiction.
\end{proof}

We are now able to prove the more general result that there are no degree 6 \PP s of \fq\ when $q>11$.
\begin{theorem}\label{thm:nopps>11}
Let $q=6m+5>11$. Then there are no degree 6 \PP s of \fq.
\end{theorem}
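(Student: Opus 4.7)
The strategy is to combine the three necessary equations from \lref{lem:threehermites} by eliminating $a_3$ and then $a_2$, reducing the hypothesis of a degree 6 \PP\ to a divisibility condition on $p$ that contradicts $p \equiv 5 \mod 6$. The case $q = 17$ is already handled by \tref{thm:nopps17}, so the main task is $q > 17$, where all three equations \eref{eq:firsthermite}, \eref{eq:secondhermite}, \eref{eq:thirdhermite} are available.

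First I would invoke \lref{lem:a1notzero} to conclude $a_1 \neq 0$, and then solve \eref{eq:firsthermite} for $a_3 = -a_2^2/(2a_1)$ (valid since $p \neq 2$). Substituting into \eref{eq:secondhermite} and clearing denominators by multiplying through by $2a_1^2$ produces
\[ a_2 \bigl(72 a_1^4 - 5 a_2^5\bigr) = 0. \quad (*) \]
Doing the same substitution in \eref{eq:thirdhermite} and multiplying by $4 a_1^4$ gives the cleaner relation
\[ 288 a_1^8 + 72 a_1^4 a_2^5 + 11 a_2^{10} = 0. \quad (**) \]
These two substitutions are the longest routine calculations in the proof and should be written out carefully, but they present no conceptual difficulty.

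The key trick is to multiply $(*)$ through by $a_2^4$, which yields the identity $72 a_1^4 a_2^5 = 5 a_2^{10}$ regardless of whether $a_2$ or $72 a_1^4 - 5 a_2^5$ is the vanishing factor. Substituting this into the middle term of $(**)$ collapses it to $288 a_1^8 + 16 a_2^{10} = 0$, i.e.\ $a_2^{10} = -18 a_1^8$ (using $p \neq 2$). I then split on the two factors of $(*)$. If $a_2 = 0$, then $18 a_1^8 = 0$ and $p \neq 2,3$ forces $a_1 = 0$, contradicting \lref{lem:a1notzero}. Otherwise $5 a_2^5 = 72 a_1^4$; squaring gives $25 a_2^{10} = 5184 a_1^8$, and combining with $a_2^{10} = -18 a_1^8$ yields $-450 a_1^8 = 5184 a_1^8$, that is, $5634 a_1^8 = 0$.

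Since $a_1 \neq 0$, this forces $p \mid 5634 = 2 \cdot 3^2 \cdot 313$. But $p \equiv 5 \mod 6$ rules out $p \in \{2,3\}$, leaving only $p = 313$; however $313 = 6 \cdot 52 + 1 \equiv 1 \mod 6$, a contradiction. I expect the main obstacle to be the bookkeeping in deriving $(*)$ and $(**)$ from \eref{eq:secondhermite} and \eref{eq:thirdhermite}; once those are in hand, the elimination via multiplication of $(*)$ by $a_2^4$ (which handles both branches uniformly) is the decisive step, and the numerical contradiction follows immediately from the factorisation of $5634$ together with the arithmetic of $313 \mod 6$.
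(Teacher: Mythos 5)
Your proposal is correct and follows essentially the same route as the paper: the case $q=17$ is delegated to \tref{thm:nopps17}, $a_1\neq 0$ comes from \lref{lem:a1notzero}, the substitution $a_3=-a_2^2/(2a_1)$ into \eref{eq:secondhermite} and \eref{eq:thirdhermite} produces exactly the paper's reduced equations, and the contradiction ultimately rests on the same prime factorisation phenomenon (your $5634=2\cdot 3^2\cdot 313$ versus the paper's $90144=2^5\cdot 3^2\cdot 313$, with $313\equiv 1 \bmod 6$). Your only departures are cosmetic but pleasant streamlinings: multiplying $(*)$ by $a_2^4$ lets you treat the $a_2=0$ branch inside the same computation, and squaring $5a_2^5=72a_1^4$ instead of dividing by $5$ absorbs the paper's separate $p=5$ case into the general argument.
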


\begin{proof}
Let $f(x)$ be a degree 6 \PP\ of \fq, where $q=p^r=6m+5 >11$. Since the $q=17$ case was considered in \tref{thm:nopps17} we may assume that $q>17$.

Moreover we may assume that $f$ is normalised, so by \eref{eq:normalised6} and \lref{lem:threehermites} we have
\[ f(x) = x^6+ a_3 x^3 + a_2 x^2 + a_1 x, \]
where
\begin{equation*}
\begin{cases}
a_2^2 + 2a_1 a_3 = 0, &(1)\\
36 a_1^2 a_2 - 15 a_2^2 a_3^2 - 10 a_1 a_3^3 =0, &(2) \\
72 a_1^4 - 12 a_2^5 - 240 a_1 a_2^3 a_3 - 360 a_1^2 a_2 a_3^2 + 55 a_2^2 a_3^4 + 22 a_1 a_3^5=0. &(3)
\end{cases}
\end{equation*}
Note also that $a_1\neq 0$ by \lref{lem:a1notzero}.

If $a_2=0$ then $a_3=0$ by (1), but then by (3) we have $a_1=0$, a contradiction. So we may assume that $a_2 \neq 0$.

Now by (1) we have $a_3=-a_2^2/(2a_1)$. Substituting this into (2) and (3) and dividing by $a_2\neq 0$ where necessary, we have
\begin{equation*}
\begin{cases}
5 a_2^5=72 a_1^4, &(2')\\
288 a_1^8 + 72 a_1^4 a_2^5 + 11 a_2^{10}=0. &(3')
\end{cases}
\end{equation*}
If $p=5$ then $(2')$ implies that $a_1=0$, a contradiction. If $p \neq 5$ then substituting $a_2^5=72a_1^4/5$ into $(3')$ gives
\[ \frac{90144}{25} a_1^8 =0, \]
and since $90144=2^5\cdot3^2\cdot 313$ is a product of primes $\not\equiv 5 \mod 6$, we have $a_1=0$, a contradiction.
\end{proof}

Our results so far have reduced the characterisation of degree 6 \PP s of $\mathbb{F}_{6m+5}$ to the case $6m+5=11$. In contrast to the fields of higher order there do exist degree 6 \PP s of $\mathbb{F}_{11}$, and the following theorem gives their complete characterisation. 
\begin{theorem}
The following is the complete list of normalised degree 6 \PP s of $\mathbb{F}_{11}$:
\[
\begin{matrix}
x^6 \pm 2x, \\
x^6 \pm 4x, \\
x^6 \pm a^2x^3+ax^2 \pm 5x \textnormal{ ($a$ a nonzero square)}, \\
x^6 \pm 4 a^2 x^3+ax^2 \pm 4x \textnormal{ ($a$ a nonsquare)}.
\end{matrix}
\]
\end{theorem}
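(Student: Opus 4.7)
The plan is to apply Hermite's criterion (\tref{thm:hermite}) directly over $\mathbb{F}_{11}$, combining it with the restrictions already established, and then verify sufficiency of the resulting candidate families. By \tref{thm:thirdhighestcoeffzero} the normalised form is $f(x) = x^6 + a_3 x^3 + a_2 x^2 + a_1 x$; at $q = 11$ the only relation supplied by \lref{lem:threehermites} is $a_2^2 + 2 a_1 a_3 = 0$, and \lref{lem:a1notzero} gives $a_1 \neq 0$. The natural split is on whether $a_2 = 0$.

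In the subcase $a_2 = 0$, the structural identity forces $a_3 = 0$, so $f(x) = x^6 + a_1 x = x^{(q+1)/2} + a_1 x$ falls exactly into \tref{thm:quadratic_binomial}: $f$ is a \PP\ if and only if $a_1^2 - 1$ is a nonzero square in $\mathbb{F}_{11}$. Enumerating $a_1 \in \mathbb{F}_{11}^{\times}$ against the squares $\{1,3,4,5,9\}$ yields precisely $a_1 \in \{\pm 2, \pm 4\}$, giving the first two families of the list.

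In the main subcase $a_2 \neq 0$, I would substitute $a_3 = -a_2^2/(2 a_1)$ and then apply \tref{thm:hermite} at the next admissible powers $t = 4, 5, 6, \ldots$ (every $t$ with $1 \leq t \leq 9$ is automatically coprime to $p=11$). For each such $t$, I would expand $f(x)^t$ via \tref{thm:multinomial}, reduce modulo $x^{11} - x$ by collapsing each $x^n$ ($n \geq 1$) to $x^{((n-1)\bmod 10) + 1}$, and equate the coefficient of $x^{10}$ to zero. After clearing denominators each identity becomes a polynomial relation in $a_1$ and $a_2$ alone; by analogy with the $q > 11$ computation in \lref{lem:threehermites} I expect these to factor and force $a_1 \in \{\pm 5, \pm 4\}$, yielding (via the substitution) $a_3 = \pm a_2^2$ when $a_1 = \pm 5$ and $a_3 = \pm 4 a_2^2$ when $a_1 = \pm 4$. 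A further Hermite equation should then dictate the quadratic character of $a_2$: a nonzero square in the $a_1 = \pm 5$ subcase and a nonsquare in the $a_1 = \pm 4$ subcase, matching the claimed parametrisation.

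Sufficiency is then to be verified on each of the finitely many one-parameter families; because $|\mathbb{F}_{11}|$ is small, the cleanest route is direct evaluation of $f$ on $\mathbb{F}_{11}$ and checking bijectivity via \lref{lem:alt_defns}, rather than continuing to grind through Hermite's criterion. The main obstacle will be the bookkeeping in the higher-power expansions of $f(x)^t$ for $t = 4, 5, 6$: the sets of tuples $(k_1, k_2, k_3, k_6)$ with $k_1 + k_2 + k_3 + k_6 = t$ and $k_1 + 2 k_2 + 3 k_3 + 6 k_6 \in \{10, 20, 30\}$ grow quickly, and simplifying the resulting multinomial coefficients modulo $11$ requires care even with \tref{thm:lucas} in hand.
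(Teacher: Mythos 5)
Your plan is sound and, for the most part, retraces the paper's own argument: the paper likewise starts from the normalised form $x^6+a_3x^3+a_2x^2+a_1x$, extracts $a_2^2+2a_1a_3=0$ from the power $f(x)^3$ (equivalently, from \lref{lem:threehermites} with $m=1$), invokes \lref{lem:a1notzero}, substitutes $a_3=-a_2^2/(2a_1)$, and splits on whether $a_2=0$. The one genuinely different ingredient is your treatment of the $a_2=0$ subcase via \tref{thm:quadratic_binomial}: since $6=(11+1)/2$, the binomial $x^6+a_1x$ is a \PP\ of $\mathbb{F}_{11}$ exactly when $a_1^2-1$ is a nonzero square, and enumeration gives $a_1\in\{\pm2,\pm4\}$. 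This is cleaner than the paper's route, which instead solves $a_1^4+2a_1^2+9=0$ obtained from the $f(x)^5$ expansion and then still has to verify sufficiency; your version delivers necessity and sufficiency for that subcase in one stroke. For the $a_2\neq0$ case you only describe the computation rather than carry it out, but the powers you propose ($t=4,5$; $t=6$ is not needed) and the predicted conclusions ($a_1\in\{\pm4,\pm5\}$, with $a_2^5=-1$ forcing $a_2$ a nonsquare and $a_3=\pm4a_2^2$ when $a_1=\pm4$, and $a_2^5=1$ forcing $a_2$ a nonzero square and $a_3=\pm a_2^2$ when $a_1=\pm5$) are exactly what the paper's equations $(2')$ and $(3')$ yield, so the gap is purely one of executing a finite, routine expansion. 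Your suggestion to confirm sufficiency by direct evaluation over $\mathbb{F}_{11}$ is an acceptable substitute for the paper's check of the remaining Hermite powers; one small caution is that the bookkeeping for $f(x)^5$ is lighter than you fear, since $\deg f^5=30<2(q-1)+q-1$ fails only for $i=3$ and in fact only the exponents $10$, $20$ and $30$ need examining, with $t<p$ making \tref{thm:lucas} vacuous.
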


\begin{proof}
By \eref{eq:normalised6}, let
\[ f(x) = x^6+a_3 x^3+a_2 x^2+a_1 x \in \mathbb{F}_{11}[x] \]
be a normalised \PP\ of $\mathbb{F}_{11}$. Then by \tref{thm:hermite} the reductions of $f(x)^3$, $f(x)^4$ and $f(x)^5$ modulo $x^{11}-x$ must have degree $\leq 10$. Performing these expansions (routine calculations omitted) and equating the coefficient of $x^{10}$ to zero in each case, we get the necessary conditions
\[
\begin{cases}
a_2^2 + 2 a_1 a_3=0, &(1)\\
4 a_2 + a_1^2 a_2 + 6 a_2^2 a_3^2 + 4 a_1 a_3^3=0, &(2)\\
1 + 10 a_1^2 + 5 a_1^4 + a_2^5 + 9 a_1 a_2^3 a_3 + 8 a_2 a_3^2 + 8 a_1^2 a_2 a_3^2=0. &(3)
\end{cases}
\]
Since $a_1\neq 0$ (\lref{lem:a1notzero}), we may express (1) as $a_3=-a_2^2/(2 a_1)$. Substituting this into (2) and (3) and simplifying gives
\[
\begin{cases}
a_3=-a_2^2/(2a_1), &(1') \\
4 a_1^2 a_2 + a_1^4 a_2 + a_2^6=0, &(2')\\
2 a_1^2 + 9 a_1^4 + 10 a_1^6 + 4 a_2^5 + 8 a_1^2 a_2^5=0. &(3')
\end{cases}
\]
If $a_2=0$ then $a_3=0$ by $(1')$, and $(3')$ reduces to 
\[ a_1^4+ 2 a_1^2+9 =0. \]
By the quadratic formula we then have $a_1^2 = 4$ or $5$, so that $a_1= \pm 2$ or $\pm 4$. Thus we have the following candidates for \PP s:
\[ x^6 \pm 2x, x^6 \pm 4x. \]
If $a_2 \neq 0$ then we may divide $(2')$ by $a_2$ and rearrange to get $a_2^5= 10 a_1^2(a_1^2+4)$. Substituting this into $(3')$ we have
\[ a_1^4 + 3a_1^2+4. \]
By the quadratic formula we have $a_1^2 = 3$ or $5$ so that $a_1= \pm 4$ or $\pm 5$.

If $a_1=\pm 4$ then we have $a_2^5= 10 \cdot 4^2 \cdot (4^2+4)=-1$. By \eref{eq:squares}, $a_2$ is a nonsquare in $\mathbb{F}_{11}$. We then have $a_3=-a_2^2/(\pm 8)=\pm4a_2^2$. Denoting $a_2$ by $a$ this gives us the family of candidate polynomials
\[ x^6 \pm 4a^2x^3 + ax^2 \pm 4x \text{ ($a$ a nonsquare)}. \]
If $a_1=\pm 5$ then we have $a_2^5=10 \cdot 5^2 \cdot (5^2+4)=1$. By \eref{eq:squares}, $a_2$ is a nonzero square in $\mathbb{F}_{11}$. We then have $a_3=-a_2^2/(\pm 10)=\pm a_2^2$. Denoting $a_2$ by $a$ this gives us the family of candidate polynomials
\[ x^6 \pm a^2x^3 + ax^2 \pm 5x \text{ ($a$ a nonzero square)}. \]
Routine checking shows that all of the polynomials given satisfy the remaining powers in \tref{thm:hermite}, so they are indeed \PP s.
\end{proof}

\section{Degree 6 PPs of $\mathbb{F}_{6m+3}$}\label{sec:3k}
If $q=p^r=6m+3$ then $p=3$; the goal of this section is to characterise all degree 6 \PP s over finite fields of the form $\mathbb{F}_{3^r}$. Note that we are only interested in $r \geq 2$, because any degree 6 \PP\ of $\mathbb{F}_3$ may be reduced mod $x^3-x$ to a linear polynomial (by \lref{lem:reduction} and \cref{cor:max_degree}). So \PP s of $\mathbb{F}_3$ cannot be true degree 6 polynomials. 

Although normalisation in the sense of \dref{def:normalised} only allows us to restrict the constant term and the coefficient of $x^6$, the following lemma uses a linear transformation to additionally remove the coefficient of either $x^5$ or $x^4$. It shows that if we can characterise all degree 6 \PP s with at most one $x^5$ or $x^4$ term, then via linear transformations we can obtain the full list of \PP s.
\begin{lemma}\label{lem:a5ora4=0}
Let 
\[ f(x) = x^6+a_5 x^5 + a_4 x^4 + a_3 x^3 + a_2 x^2 + a_1 x \]
be a normalised \PP\ of degree 6 in $\mathbb{F}_{3^r}$. If $a_5 \neq 0$ then by a transformation of the form $f(x+b)+c$ we can remove the $x^4$ term.
\end{lemma}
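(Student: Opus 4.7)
The plan is a direct computation of $f(x+b)$ using the simplifications available in characteristic $3$, followed by solving a single linear equation for $b$.

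First I would expand $f(x+b)$ term by term. In characteristic $3$ the Frobenius identity gives $(x+b)^3 = x^3 + b^3$, and hence $(x+b)^6 = (x^3+b^3)^2 = x^6 + 2b^3 x^3 + b^6$. Neither of these contributes to the $x^5$ or $x^4$ terms. Consequently the $x^4$ coefficient of $f(x+b)$ comes entirely from $a_5(x+b)^5$ and $a_4(x+b)^4$. Writing
\[
(x+b)^5 = (x^3+b^3)(x^2+2bx+b^2), \qquad (x+b)^4 = (x^3+b^3)(x+b),
\]
a short expansion shows that the $x^4$ coefficient in $(x+b)^5$ is $2b$ and in $(x+b)^4$ is $1$. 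Hence the $x^4$ coefficient of $f(x+b)$ equals $2a_5 b + a_4$.

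Since $a_5 \neq 0$ and $2 \neq 0$ in $\mathbb{F}_{3^r}$, the equation $2a_5 b + a_4 = 0$ has the unique solution $b = -a_4/(2a_5)$. With this $b$, choose $c$ to cancel the constant term of $f(x+b)$. The resulting polynomial $g(x) = f(x+b)+c$ is monic of degree $6$, satisfies $g(0)=0$, has no $x^4$ term, and is a \PP\ of $\mathbb{F}_{3^r}$ because \PP s are closed under composition with the linear \PP\ $x \mapsto x+b$ and under addition of a constant. Since $3 \mid 6$, \dref{def:normalised} imposes no condition on the $x^5$ coefficient, so $g$ remains in normalised form.

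There is no real obstacle here beyond careful bookkeeping of coefficients; the lemma rests on the characteristic-$3$ observation that the only powers $(x+b)^i$ with $1 \leq i \leq 6$ that contribute to the $x^4$ coefficient are $i=5$ and $i=4$, which reduces the problem to solving a linear equation in $b$.
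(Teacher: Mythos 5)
Your proof is correct and follows essentially the same route as the paper: expand $f(x+b)$ using the characteristic-$3$ identities, observe that the $x^4$ coefficient is $2a_5b+a_4$, and solve the linear equation (your $b=-a_4/(2a_5)$ is the paper's $b=a_4/a_5$ since $2=-1$ in $\mathbb{F}_{3^r}$). The added remarks that $g$ remains a \PP\ and stays normalised are correct and slightly more careful than the paper's own write-up.
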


\begin{proof}
Expanding $f(x+b)+c$ in $\mathbb{F}_{3^r}$ we have 
\begin{multline*}
f(x+b)+c=   x^6 + a_5 x^5 + (a_4 + 
    2 a_5 b) x^4 + (a_3 + a_4 b + a_5 b^2 + 2 b^3) x^3 \\+(a_2 + 
    a_5 b^3) x^2 + (a_1 + 2 a_2 b + a_4 b^3 + 2 a_5 b^4) x \\+ (a_1 b + a_2 b^2 + a_3 b^3 + a_4 b^4+ a_5 b^5 + b^6 + c). 
\end{multline*}
If $a_5 \neq 0$ then set $b=a_4/a_5$ to remove the $x^4$ term and set $c=- (a_1 b + a_2 b^2 + a_3 b^3 + a_4 b^4+ a_5 b^5 + b^6)$ to remove the constant term.
\end{proof}

We split our characterisation of degree 6 \PP s into two cases; first the special case $q=3^2$, then the general case $q>3^2$.
\subsection{Degree 6 PPs of $\mathbb{F}_{3^2}$}
In this section $2^{1/2}$ is a symbol for \emph{either} solution of $x^2-2=0$ in $\mathbb{F}_{3^2}$. \\[0.3cm] We first consider the case $a_5=0$.

\begin{theorem}\label{thm:f9a5zero}
The complete list of \PP s of $\mathbb{F}_{3^2}$ of the form 
\[
f(x)=x^6+a_4 x^4 +a_3x^3+a_2x^2+a_1x
\]
is given by 
\begin{gather*}
x^6+a^2 x^4 + a^7 b x^3 + a^4 x^2 + a(2b+1)x, \\
a\neq 0, b\in \{0,1,2^{1/2},1+2^{1/2} \}.
\end{gather*}
\end{theorem}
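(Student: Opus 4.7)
The approach is to apply Hermite's criterion (\tref{thm:hermite}) to $f$ over $\mathbb{F}_9$. With $p = 3$ and $q = 9$, the relevant exponents with $t \not\equiv 0 \pmod p$ are $t \in \{1, 2, 4, 5, 7\}$, and I also need the monic degree-$8$ condition coming from $t = q - 1 = 8$. The $t = 1$ case is trivial since $\deg f = 6 < 8$. The relation $x^9 \equiv x$ implies that $x^n$ reduces modulo $x^9 - x$ to $x^m$ where $m \equiv n \pmod 8$ and $1 \leq m \leq 8$; thus the coefficient of $x^8$ in $f(x)^t \pmod{x^9 - x}$ is the sum, over $n \in \{8, 16, 24, \ldots\}$ with $n \leq 6t$, of the coefficient of $x^n$ in $f(x)^t$.

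My plan is to expand $f^2, f^4, f^5, f^7, f^8$ via the multinomial theorem and extract these coefficients, producing four vanishing conditions (for $t = 2, 4, 5, 7$) and one monicity condition (for $t = 8$), all polynomial in $a_1, a_2, a_3, a_4$ over $\mathbb{F}_9$. The $t = 2$ equation should be the cleanest: the system $k_1 + k_2 + k_3 + k_4 + k_6 = 2$ with $k_1 + 2k_2 + 3k_3 + 4k_4 + 6k_6 = 8$ has very few solutions, and I expect it to yield a simple relation such as $a_2 = a_4^2$. Substituting into the $t = 4, 5$ equations should simplify the system enough to permit the parametrisation $a_4 = a^2$, $a_3 = a^7 b$, turning the remaining equations into polynomial equations in $b$ alone (for each fixed $a$). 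The $t = 7$ equation should then restrict $b$ to satisfy a quartic whose four roots in $\mathbb{F}_9$ are exactly $\{0, 1, 2^{1/2}, 1 + 2^{1/2}\}$, while the $t = 8$ equation fixes $a_1 = a(2b+1)$ and confirms monicity of the reduction.

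The chief obstacles are the length and delicacy of the multinomial expansions of $f^7$ and $f^8$ — each requires enumerating nonnegative integer partitions constrained by both a count and a weight equation, for up to six values of $n$ — and the arithmetic of $\mathbb{F}_9$, where squares and nonsquares must be handled separately when extracting $a$ from $a_4$ (it is here that the element $2^{1/2}$ enters, as the root of an irreducible quadratic over $\mathbb{F}_3$). Systematic use of \tref{thm:lucas} to discard multinomial coefficients divisible by $3$ will prune many terms and keep the work manageable. Once the candidate list is in hand, I would verify directly that each polynomial is a PP of $\mathbb{F}_9$; since Hermite's criterion is an iff, this is automatic from the derivation, but substituting the nine field elements into a representative of each family provides a useful sanity check.
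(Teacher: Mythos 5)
Your plan is essentially the paper's own proof: apply Hermite's criterion at the exponents $t=2,4,5,7$ together with the monic condition at $t=8$, expand via the multinomial theorem with \tref{thm:lucas} pruning coefficients divisible by $3$, and solve the resulting system in $a_1,a_2,a_3,a_4$, obtaining $a_2=a_4^2$ from $t=2$ and the parametrisation $a_4=a^2$ from the $t=4$ condition exactly as you predict. The only discrepancy is in which exponent delivers which constraint: in the paper's execution the $t=5$ equation (after discarding a spurious branch by an explicit injectivity check) forces $a_1=2a_3a^2+a$, the $t=7$ condition turns out to be automatically satisfied, and it is the monicity of the reduction of $f^8$ that restricts $aa_3$ to $\{0,1,2^{1/2},1+2^{1/2}\}$ --- but since Hermite's criterion is an equivalence, imposing all the conditions as you propose yields the same list.
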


\begin{proof}
Let $f(x)$ be a \PP\ of $\mathbb{F}_{3^2}$. Then by Hermite's criterion (\tref{thm:hermite}) the reductions of $f(x)^2,f(x)^4$ and $f(x)^5$ modulo $x^9-x$ have degree $\leq 7$. Performing these expansions (routine calculations omitted) and equating the coefficient of $x^8$ to zero in each case, we get the necessary conditions
\[
\begin{cases}
a_2 = a_4^2, &(1)\\
1 + a_2^4 + a_4^4=0, &(2)\\
a_1^2 a_2^3 + 2 a_1^3 a_2 a_3 + a_3^2 + 2 a_1 a_3^3 + 2 a_1^4 a_4 + \\ \quad 2 a_2 a_4 +2 a_2^3 a_4 + 2 a_3^4 a_4  + a_4^3 + a_2^2 a_4^3 + 2 a_1 a_3 a_4^3=0. &(3)
\end{cases}
\]
From (1) and (2) we have $1+a_4^4+a_4^8=0$. In particular this shows that $a_4\neq0$, so we may let $a_4^8=1$ and reduce the equation to $a_4^4=1$. By \eref{eq:squares}, $a_4$ is a nonzero square in $\mathbb{F}_{3^2}$. 

\noindent Substituting (1) into (3) we have
\begin{align*}
0 &= a_3^2 + 2 a_1 a_3^3 + 2 a_1^4 a_4 + 2 a_3^4 a_4 + 2 a_1^3 a_3 a_4^2 + 2 a_1 a_3 a_4^3 + a_1^2 a_4^6\\
	&= (a_1^2 a_4^6 + 2 a_1a_3a_4^3 +a_3^2) + 2 (a_1 + a_3 a_4) (a_1^3 a_4+a_3^3).
\end{align*}
Multiplying by $a_4^3 \neq 0$ and simplifying via $a_4^4=1$ we have
\begin{align*}
0 &= a_4(a_1^2+2a_1a_3a_4 + a_3^2a_4^2) + 2(a_1 + a_3a_4) (a_1^3+a_3^3a_4^3)\\
	&= a_4(a_1+a_3a_4)^2 +2(a_1 +a_3a_4)^4\\
	&=(a_1+a_3a_4)^2(a_4+2(a_1+a_3a_4)^2).
\end{align*}
Hence either $a_1=2a_3a_4$ or $a_4=(a_1+a_3a_4)^2$. But if $a_1=2a_3a_4$ then the polynomial 
\[ x^6+a_4 x^4 + a_3 x^3+a_4^2 x^2+ 2a_3 a_4 x \]
has roots at 0 and $a_4^{1/2} \neq 0$, thus failing to be injective. So we must have $a_1=2a_3a_4\pm a_4^{1/2}$.

\noindent Thus (1)-(3) are satisfied precisely when:
\[
\begin{cases}
a_4 \text{ is a nonzero square},\\
a_2 = a_4^2,\\
a_1=2a_3a_4\pm a_4^{1/2}.
\end{cases}
\]
It is convenient to give the following parametrisation, where $a$ is an arbitrary nonzero element of $\mathbb{F}_{3^2}$:
\[
\begin{cases}
a_4= a^2,\\
a_2 = a^4,\\
a_1=2a_3a^2+a.
\end{cases}
\]
We have chosen the values $a_1,a_2,a_4$ to satisfy the powers $2,4,5$ in Hermite's criterion. Indeed, it happens that these choices also ensure that the power 7 is satisfied. Hence $f(x)$ is a \PP\ if and only if the reduction of $f(x)^8$ modulo $x^9-x$ is monic of degree 8.  Now we have shown that $f(x)$ must be of the form (with $a\neq0$)
\begin{equation*}
f(x)=x^6+a^2x^4+a_3x^3+a^4x^2+a(2a a_3+1)x.
\end{equation*}
Expanding $f(x)^8 \mod (x^9-x)$ and equating the coefficient of $x^8$ to 1, we have
\[ 1 + a a_3 + a^2 a_3^2 + a^3 a_3^3 + 2 a^4 a_3^4 + a^6 a_3^6 =1. \]
After factorisation this condition becomes 
\[a a_3 (a a_3+2) (a^2 a_3^2-2) ((a a_3 +2)^2-2)= 0,\]
which is satisfied whenever $a a_3=0,1,2^{1/2}$ or $1 + 2^{1/2}$. Equivalently, $a_3=0$, $a^{-1}$, $2^{1/2}a^{-1}$ or $(1 + 2^{1/2})a^{-1}$. Using $a^{-1}=a^7$ and simplifying gives us the following family of \PP s:
\begin{gather*}
x^6+a^2 x^4 + a^7 b x^3 + a^4 x^2 + a(2b+1)x, \\
a\neq 0, b\in \{0,1,2^{1/2},1+2^{1/2}\} .
\end{gather*}
\end{proof}
We compare this result to other characterisations of this case in the literature. The family of \PP s given above is equivalent to the original family proposed by Dickson in \cite{dickson1897II}, however we suggest that our parametrisation is much cleaner than his family given by
\begin{gather*}
x^6+ax^4+bx^3+a^2x^2+(2ab \pm a^{5/2})x \\
a \textit{ square}, a \neq 0; b=0, \pm 2^{1/2} a^{3/2}, \pm a^{3/2}, \text{ or } \pm (2^{1/2} +1)a^{3/2}. \\
\textit{ The signs of $b$ to correspond to that of } \pm a^{5/2}.
\end{gather*}
On the other hand, the characterisation given in \cite[Theorem~3.14]{evans1992} is incorrect, for, in particular, it suggests that the coefficient of $x^4$ must be a fourth power.

We now consider the case $a_5 \neq 0$. In light of \lref{lem:a5ora4=0} we may assume that $a_4=0$.
\begin{theorem}\label{thm:f9a4zero}
The complete list of \PP s of $\mathbb{F}_{3^2}$ of the form 
\[
f(x)=x^6+a_5 x^5 +a_3x^3+a_2x^2+a_1x
\]
with $a_5 \neq 0$ is given by
\begin{gather*}
x^6+a x^5 + a^3 x^3 + 2 a^4 x^2 +2a^5 x \quad (a\neq 0), \\
x^6 + a x^5 + \varphi a^3 x^3 + 2 \varphi a^4 x^2 + 2^{1/2} a^5 x \quad (a\neq0,\varphi=\pm(1-2^{1/2})), \\
x^6+a x^5+2 a^3 x^3+a^4 x^2 +  (2+2^{1/2}) a^5 x \quad (a \neq 0).
\end{gather*}
\end{theorem}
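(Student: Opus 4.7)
The plan is to mirror the proof of \tref{thm:f9a5zero}: use Hermite's criterion (\tref{thm:hermite}) to extract necessary polynomial conditions on $a_1, a_2, a_3, a_5$, solve the resulting system under the hypothesis $a_5 \neq 0$, and then verify that the candidates so obtained satisfy the final monic degree $q-1$ condition. Since $q=9$ and $p=3$, the integers $t \in \{1,\dots,7\}$ with $t \not\equiv 0 \pmod 3$ are $t=1,2,4,5,7$; the case $t=1$ is automatic because $\deg f = 6 < 8$. Thus the relevant necessary conditions come from computing the coefficient of $x^{q-1}=x^8$ in the reductions of $f(x)^t \bmod (x^9-x)$ for $t=2,4,5,7$ and equating each to zero.

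For each such $t$, I would expand $f(x)^t$ using the multinomial theorem (\tref{thm:multinomial}) and then collect coefficients of $x^n$ with $n \equiv 8 \pmod 8$ and $n \leq 6t$, since $x^9 \equiv x \bmod (x^9-x)$ forces $x^n \equiv x^{((n-1)\bmod 8)+1}$. This yields a system of four polynomial equations in $a_1,a_2,a_3,a_5$. Exploiting $a_5 \neq 0$, a natural rescaling is to set $a_1 = e\,a_5^5$, $a_2 = d\,a_5^4$, $a_3 = c\,a_5^3$, which should render the equations homogeneous in the new variables $c,d,e \in \mathbb{F}_{3^2}$ and independent of $a_5$. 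Successive elimination in the reduced system should cut it down to a finite set of solutions $(c,d,e)$, which after relabelling $a_5 = a$ correspond exactly to the three families listed in the statement. Sufficiency is then checked by verifying that, for each candidate family, the coefficient of $x^8$ in $f(x)^8 \bmod (x^9-x)$ equals $1$; once this is established, Hermite's criterion confirms that each family consists of \PPs.

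The main obstacle will be navigating the nonlinear algebra over $\mathbb{F}_{3^2}$: the equations are high-degree and their factorisation into the stated families is not transparent. The appearance of $2^{1/2}$ in two of the three families, together with the parameter $\varphi = \pm(1-2^{1/2})$ in family two, strongly suggests that at a key elimination step a quadratic over $\mathbb{F}_3$ with discriminant $2$ (a non-square in $\mathbb{F}_3$) must be solved, splitting the analysis into cases according to whether a certain intermediate quantity lies in $\mathbb{F}_3$ or in $\mathbb{F}_3(2^{1/2}) \setminus \mathbb{F}_3$. As in the proof of \tref{thm:f9a5zero}, I would expect clean factorisations (analogous to the $(a_1+a_3a_4)^2$ factor that appeared there) to isolate each case, and a modest amount of trial substitution may be needed to locate them.
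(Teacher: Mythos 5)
Your proposal is correct and follows essentially the same route as the paper: the paper likewise applies Hermite's criterion (using the powers $t=2,4,5$ for the necessary equations, deferring $t=7$ and $t=8$ to the final verification), establishes $a_1,a_2\neq 0$, rescales via $a_1=\eta a_5^5$ and $a_3=\varphi a_5^3$ exactly as you anticipate, and obtains the three families from the factorisation $(\eta+1)(\eta^2+1)((\eta+1)^2+1)=0$, which is precisely the quadratic-with-nonsquare-discriminant phenomenon you predict as the source of $2^{1/2}$.
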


\begin{proof}
If $f(x)$ is a \PP\ of $\mathbb{F}_{3^2}$ then by \tref{thm:hermite} the reductions of $f(x)^2$, $f(x)^4$ and $f(x)^5$ modulo $x^9-x$ have degree $\leq 7$. Performing these expansions and equating the coefficient of $x^8$ to zero in each case we have
\[
\begin{cases}
a_2 = 2 a_3 a_5, &(1)\\
1 + a_2^4 + a_1^3 a_5 + a_1 a_5^3=0, &(2)\\
a_1^2 a_2^3 + 2 a_1^3 a_2 a_3 + a_3^2 + 2 a_1 a_3^3 + 2 a_1 a_5 + 2 a_2 a_3^3 a_5 + a_2^3 a_5^2 + 2 a_3 a_5^3=0. &(3)
\end{cases}
\]
First we show that $a_1 \neq 0$ and $a_2 \neq 0$. If $a_2 =0$ then $a_3=0$ by (1), from which it follows from (3) that $a_1=0$, in contradiction to (2). If $a_1=0$ then substituting (1) into (3) we have
\[ a_3^2 + a_3^4 a_5^2 + 2 a_3 a_5^3 + 2 a_3^3 a_5^5=0.\] 
Multiplying by $a_5^6 \neq 0$ and simplifying via $a_5^8=1$ we have
\[ a_3^4 + 2 a_3 a_5 + 2 a_3^3 a_5^3 + a_3^2 a_5^6 = a_3 (a_3 + 2 a_5^3) (a_3^2 + a_5^6) =0. \]
But each of $a_3=0$, $a_3=a_5^3$ and $a_3=2^{1/2} a_5^3$ lead to a contradiction in (2). So we must have $a_1 \neq 0$ and $a_2 \neq 0$. 

\noindent Squaring $(2)$ we have
\begin{align*} 
0 &= 1 + 2 a_2^4 + a_2^8 + 2 a_1^3 a_5 + 2 a_1 a_5^3  + 2 a_1^3 a_2^4 a_5 + 2 a_1 a_2^4 a_5^3+ a_1^6 a_5^2  + \\
	&\quad \, \, \,2 a_1^4 a_5^4 + a_1^2 a_5^6\\
	&= 2 + 2 a_2^4 + a_2^8 + 2( a_1^3 a_5 + a_1 a_5^3+1)  +a_2^4( 2 a_1^3 a_5+ 2 a_1 a_5^3) + a_1^6 a_5^2  +\\
 &\quad \, \, \,2 a_1^4 a_5^4 +  a_1^2 a_5^6\\
	&= 2 + 2 a_2^4 + a_2^8 + a_2^4  +a_2^4( 1+a_2^4) + a_1^6 a_5^2  + 2 a_1^4 a_5^4 + a_1^2 a_5^6\\
	&= 2 + a_2^4 + 2 a_2^8 + a_1^6 a_5^2 + 2 a_1^4 a_5^4 + a_1^2 a_5^6.
\end{align*} 
But $a_2^8=1$ since $a_2 \neq 0$, so we have
\begin{align*} 
0 &= (1 + a_2^4) + a_1^6 a_5^2 + 2 a_1^4 a_5^4 + a_1^2 a_5^6\\
	&=2 a_1^3 a_5+2 a_1 a_5^3 + a_1^6 a_5^2 + 2 a_1^4 a_5^4 + a_1^2 a_5^6.
\end{align*}
Dividing by $a_1 a_5 \neq 0$ and writing $a_1=\eta a_5^5$ we have
\begin{align*} 
0 &= 2 a_1^2 + a_1^5 a_5 + 2 a_5^2 + 2 a_1^3 a_5^3 + a_1 a_5^5 \\
	&= 2 + a_5^8 \eta + 2 a_5^8 \eta^2 + 2 a_5^{16} \eta^3 + a_5^{24} \eta^5\\
	&= \eta^5 + 2\eta^3 +2 \eta^2 + \eta +2\\
	&=(\eta+1)(\eta ^2+1)((\eta+1) ^2+1).
\end{align*}
Hence $\eta=2,2^{1/2}$ or $2+2^{1/2}$.

If $\eta=2$ then $a_1=2a_5^5$, and (1)-(3) reduce to 
\[
\begin{cases}
a_2 = 2 a_3 a_5, &(1)\\
a_2^4 =1, &(2)\\
a_3^4 a_5^2 + 2 a_3 a_5^3 + 2 a_3^3 a_5^5 + a_5^6=a_5^2(a_3-a_5^3)^4=0. &(3)
\end{cases}
\]
(1)-(3) are satisfied precisely when $a_3=a_5^3$ and $a_2=2 a_5^4$. Then one may check that for any $a_5 \neq 0$ the powers 7 and 8 in Hermite's criterion are also satisfied, so we have the following family of \PP s
\[
x^6+a x^5 + a^3 x^3 + 2 a^4 x^2 +2a^5 x \quad (a\neq 0).
\]

Now suppose that $\eta=2^{1/2}$. Note that although $2^{1/2}$ may refer to \emph{either} square root of 2 in $\mathbb{F}_{3^2}$ we assume that the particular choice is fixed. Then $a_1=2^{1/2} a_5^5$, and (1)-(3) reduce to 
\[
\begin{cases}
a_2 = 2 a_3 a_5, &(1)\\
a_2^4 =2, &(2)\\
(1- 2^{1/2}) a_3^2 + a_3^4 a_5^2 + 2 a_3 a_5^3 - 2^{1/2} a_3^3 a_5^5 - 2^{1/2} a_5^6=0. &(3)
\end{cases}
\]
Multiplying (3) by $a_5^2$ and letting $a_3^4 a_5^4=a_2^4=2$ and $a_3=\varphi a_5^3$ we have
\[ - 2^{1/2} \varphi^3+(1 -2^{1/2}) \varphi^2+2\varphi-(1 +2^{1/2})=0. \]
The roots of this polynomial are $\varphi=-1+2^{1/2},1-2^{1/2},-1-2^{1/2}$. Letting $a_5=a$ we have reduced to the following candidates
\begin{gather*}
x^6 + a x^5 + \varphi a^3 x^3 + 2 \varphi a^4 x^2 + 2^{1/2} a^5 x,\\
a\neq0,\varphi \in \{ \pm (1-2^{1/2}),-1-2^{1/2} \}.
\end{gather*}
If $\varphi=-1-2^{1/2}$ then this polynomial has roots 0 and $- 2^{1/2} a \neq 0$, thus failing to be injective. However if $\varphi=\pm(1-2^{1/2})$ then one may verify that (1)-(3) as well as the powers 7 and 8 in Hermite's criterion are satisfied. This gives us the family
\begin{gather*}
x^6 + a x^5 + \varphi a^3 x^3 + 2 \varphi a^4 x^2 + 2^{1/2} a^5 x,\\
a\neq0,\varphi = \pm (1-2^{1/2}).
\end{gather*}

Finally, if $\eta=2+2^{1/2}$ then $a_1=(2+2^{1/2}) a_5^5$, and (1)-(3) reduce to 
\[
\begin{cases}
a_2 = 2 a_3 a_5, &(1)\\
a_2^4 =1, &(2)\\
0=(1 - 2^{1/2}) a_5^6+2^{1/2} a_3^3 a_5^5 +2 a_3 a_5^3+a_3^4 a_5^2- 2^{1/2} a_3^2. &(3)
\end{cases}
\]
Multiplying (3) by $a_5^2$ and letting $a_3^4 a_5^4=a_2^4=1$ and $a_3=\varphi a_5^3$ we have
\[ 2^{1/2} \varphi^3-2^{1/2} \varphi^2-\varphi -(1+2^{1/2})=0. \]
The only root is $\varphi=2$, and the resulting family satisfies (1)-(3) as well as the powers 7 and 8 in Hermite's criterion:
\[
x^6+a x^5+2 a^3 x^3+a^4 x^2 +  (2+2^{1/2}) a^5 x \quad (a \neq 0).
\]
\end{proof}

\subsection{Degree 6 PPs of $\mathbb{F}_{3^r}$, $r>2$}\label{sec:ppsF9}
We now address the more general case of classifying degree 6 \PP s of $\mathbb{F}_{3^r}$ for all $r>2$. We will require the 3-adic expansion of $m=(3^{r-1}-1)/2$:
\begin{equation}\label{eq:3adic_m}
 m=1+3+\cdots +3^{r-3} + 3^{r-2}.
\end{equation}

\begin{lemma}\label{lem:m_inequality3}
Let $q=3^r=6m+3$ where $r > 2$. Then $1 < m <q-8$ and $m \equiv 1 \mod 3$. 
\end{lemma}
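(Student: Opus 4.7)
The plan is to verify each of the three assertions directly from the identity $m = (3^r-3)/6 = (3^{r-1}-1)/2$, which is valid since $3^{r-1}$ is odd. The hypothesis $r > 2$, i.e.\ $r \geq 3$, will be used throughout.

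First I would establish the lower bound. For $r \geq 3$ we have $3^{r-1} \geq 9$, so $m = (3^{r-1}-1)/2 \geq 4$, which gives $1 < m$. Next, for the upper bound $m < q-8$, I would unwind the inequality: $m < q - 8$ is equivalent to $(3^r - 3)/6 < 3^r - 8$, i.e.\ $3^r - 3 < 6\cdot 3^r - 48$, i.e.\ $45 < 5 \cdot 3^r$, i.e.\ $9 < 3^r$. This holds precisely when $r \geq 3$, so the bound is immediate.

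Finally, for the congruence $m \equiv 1 \pmod 3$, I would use the identity $2m = 3^{r-1} - 1$. Since $r \geq 3$, we have $3^{r-1} \equiv 0 \pmod 3$, so $2m \equiv -1 \equiv 2 \pmod 3$, and multiplying by the inverse of $2$ modulo $3$ (namely $2$) gives $m \equiv 1 \pmod 3$. Alternatively, this follows at once from the $3$-adic expansion \eref{eq:3adic_m}, since all terms except the constant $1$ are divisible by $3$.

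There is no real obstacle here — the lemma is a short arithmetic observation about the specific value of $m$ needed later when applying Hermite's criterion with the exponent $m$ (and small perturbations of it) in the $q = 3^r$ case. The three conclusions will be used respectively to ensure that $m$ is a legitimate exponent to invoke in \tref{thm:hermite} (i.e.\ $1 \leq m \leq q-2$, with room to spare for $m+1, m+2, \ldots$) and that $m$ is not divisible by the characteristic $p = 3$.
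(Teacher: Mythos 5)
Your proof is correct and follows essentially the same route as the paper: both bounds are verified by unwinding the inequalities to the equivalent condition $q>9$ (i.e.\ $r>2$), and the congruence $m\equiv 1 \bmod 3$ follows from the explicit form of $m=(3^{r-1}-1)/2$, which the paper reads off from the $3$-adic expansion \eref{eq:3adic_m} and you obtain by an equivalent direct computation modulo $3$. No gaps.
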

\begin{proof}
We have $m=(q-3)/6 < q-8$ if and only if $q > 9$, which is true since $r > 2$. Similarly, $(q-3)/6 > 1$ if and only if $q > 9$. It is immediate from \eref{eq:3adic_m} that $m\equiv1 \mod 3$.
\end{proof}

As with the case $q=6m+5$ we use Hermite's criterion to derive necessary equations in the coefficients of a normalised \PP\ $f(x)$. Again we observe that $f(x)^{m+1}$ is the first power of $f$ with degree exceeding $q-1$. Hence $m+1$ is the first useful power to apply in Hermite's criterion.

In the next theorem we reduce the characterisation problem to the case $a_5 \neq 0$, but we will need a lemma first. Using the powers $m+1,m+4$ and $3m+1$ in Hermite's criterion we determine a set of necessary equations in the coefficients of a degree 6 \PP\ satisfying $a_5=0$. The reward for this long and tricky lemma is that the equations will be proven inconsistent in $\mathbb{F}_{3^r}$, thus showing that the case $a_5=0$ is empty. 
\begin{lemma}\label{lem:a5not0equations}
Suppose that
\[ 
f(x)=x^6+a_4 x^4 + a_3 x^3 + a_2 x^2 + a_1x \in \mathbb{F}_{q}[x]
\]
is a \PP\ of $\mathbb{F}_{q}$, where $q=3^r=6m+3$. If $r>2$ then we have
\[
\begin{cases}
a_2 = a_4^2, &(1)\\
2 a_1^2 a_2^3 + a_1^3 a_2 a_3 + 2 a_2 a_3^6 + a_1^4 a_4 +  \\
\quad  a_3^6 a_4^2 + a_2 a_3^4 a_4^3 + a_1 a_3^3 a_4^4 + 2 a_2^3 a_4^5 + a_1^2 a_4^6=0,&(2)\\
1 + a_4^{3m+1} + a_2^{3m+1} =0. &(3)
\end{cases}
\]
\end{lemma}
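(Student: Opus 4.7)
The plan is to apply Hermite's criterion (\tref{thm:hermite}) at three carefully chosen exponents, namely $t = m+1$, $t = m+4$, and $t = 3m+1$, extracting the coefficient of $x^{q-1}$ in each reduction $f(x)^t \bmod (x^q-x)$ and equating it to zero. By \lref{lem:m_inequality3} we have $m \equiv 1 \pmod 3$ and $1 < m < q-8$, so all three values of $t$ lie strictly between $0$ and $q-2$ and are congruent to $2$, $2$, $1$ modulo $3$ respectively, hence coprime to $p = 3$. Thus each chosen reduction must have degree at most $q-2$, forcing the coefficient of $x^{q-1}$ to vanish in each case.

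I would handle $t = m+1$ first. Since $\deg f^{m+1} = 6m + 6 = q+3 < 2(q-1)$, only the exponent $q-1$ itself can reduce to $x^{q-1}$. Enumerating via \tref{thm:multinomial} the tuples $(k_1,k_2,k_3,k_4,k_6)$ with $\sum k_i = m+1$ and $k_1+2k_2+3k_3+4k_4+6k_6 = q-1$ gives exactly the two solutions $(0,1,0,0,m)$ and $(0,0,0,2,m-1)$. The resulting identity $(m+1) a_2 + \binom{m+1}{2} a_4^2 = 0$, after division by $m+1$ and reduction modulo $3$ using $m \equiv 1$, collapses to equation $(1)$.

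Next, at $t = m+4$ the same setup applies, since $\deg f^{m+4} = 6m+24 < 2(q-1)$ whenever $m \geq 4$ (which holds for all $r \geq 3$). A longer but mechanical enumeration yields nine admissible tuples with $k_6 \in \{m-1, m-3, m-4\}$; computing the corresponding multinomial coefficients modulo $3$ by expanding the factorials and using $m \equiv 1 \pmod 3$ combines these into the nine-term equation $(2)$.

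The main obstacle is the third equation, coming from $t = 3m+1 = (q-1)/2$. Here $\deg f^{3m+1} = 18m+6 = 3(q-1)$, so contributions to the coefficient of $x^{q-1}$ arise from monomials in $f^{3m+1}$ of degree $q-1$, $2(q-1)$, and $3(q-1)$. The crucial observation is that $3m+1 = 1 + 3 + 3^2 + \cdots + 3^{r-1}$ has every base-$3$ digit equal to $1$; by \tref{thm:lucas}, the multinomial coefficient $\binom{3m+1}{k_1,k_2,k_3,k_4,k_6}$ is nonzero modulo $3$ precisely when the supports $S_i \subseteq \{0,1,\ldots,r-1\}$ of the base-$3$ expansions of the $k_i$ form a partition of $\{0,1,\ldots,r-1\}$, each coefficient then equalling $1$. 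Translating the exponent condition into $k_2 + 2k_3 + 3k_4 + 5k_6 = (2c-1)(3m+1)$ for $c \in \{1,2,3\}$, I would then argue position-by-position in base $3$ (noting that $S_4$ and $S_6$ feed into the next digit because their coefficients $3$ and $5$ carry a factor of $3$) that the only surviving partitions are the three trivial ones placing the entire index set into $S_2$, $S_4$, or $S_6$. These produce the monomials $a_2^{3m+1}$, $a_4^{3m+1}$, and the constant $1$ respectively, whose sum is equation $(3)$. The delicate step is ruling out all mixed partitions: one must show that the shifts introduced by $S_4$ and $S_6$, combined with the admissible unshifted contributions from $S_2$ and $S_3$, cannot sum to $(2c-1)T$ in any nontrivial configuration, and this inductive digit analysis is where the bulk of the work lies.
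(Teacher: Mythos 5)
Your plan follows the paper's proof almost exactly: Hermite's criterion at the exponents $m+1$, $m+4$ and $3m+1$, multinomial expansion, and a base-$3$ digit analysis via Lucas' theorem for the half-power $3m+1=(q-1)/2$, where the all-ones expansion $3m+1=1+3+\cdots+3^{r-1}$ forces the surviving partitions to be the three trivial ones. The $t=m+1$ and $t=3m+1$ portions are sound and match the paper step for step.

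The one step that would fail as literally described is your treatment of the $t=m+4$ coefficients: you propose to evaluate the multinomial coefficients ``by expanding the factorials and using $m\equiv 1\pmod 3$.'' That congruence alone does not determine these coefficients modulo $3$. For instance $\binom{m+4}{4,0,0,1,m-1}=\tfrac{(m+4)(m+3)(m+2)(m+1)m}{24}$ is $\equiv 1\pmod 3$ for $m=4$ but $\equiv 2\pmod 3$ for $m=10$, both of which satisfy $m\equiv 1\pmod 3$; the value depends on $m$ modulo higher powers of $3$ because of carries in the sum $(m+4)=k_1+\cdots+k_6$. The paper resolves this by applying \tref{thm:lucas} with the full $3$-adic expansion $m=1+3+\cdots+3^{r-2}$ from \eref{eq:3adic_m} (so that $m+4=2+2\cdot 3+3^2+\cdots+3^{r-2}$), checking digit by digit which of the $34$ solutions have a carry-free decomposition and evaluating the nonzero coefficients as products of digit multinomials. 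This is exactly the tool you already invoke for the $3m+1$ case, so the fix is to use it uniformly; note also that the full expansion produces two extra admissible tuples (with $k_6=m-6$ and $k_6=m-7$, contributing $2a_2a_4^9+a_4^{11}$) beyond those you count, which the paper then eliminates using equation $(1)$.
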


\begin{proof}
Suppose that $f(x)$ is a \PP\ of $\mathbb{F}_{q}$, where $q=3^r$ and $r>2$. By \lref{lem:m_inequality3} we have that $m+1$ and $m+4$  are nonzero mod 3, and 
\[ 2 < m+1,m+4 < q-4. \]
We also have $3m+1\equiv 1\not\equiv 0 \mod 3$ and  that
\[ 1 \leq 3m+1 \leq 6m+1=q-2 \text{ for all } m \geq 0. \]
So by \tref{thm:hermite} the reductions of $f(x)^{m+1},f(x)^{m+4}$ and $f(x)^{3m+1}$ modulo $x^q-x$ each have degree $\leq q-2$. 

We use the multinomial theorem to expand these powers. By \tref{thm:multinomial} we have, for any positive integer $t$,
\begin{equation}\label{eq:multinomial3}
f(x)^{t} = \sum_{\substack{k_1+k_2+k_3 \\ +k_4+k_6=t}} \begin{pmatrix} t \\ k_1,k_2,k_3,k_4,k_6 \end{pmatrix} a_1^{k_1}a_2^{k_2}a_3^{k_3}a_4^{k_4} \cdot x^{ k_1+2k_2+3k_3+4k_4+6k_6}.
\end{equation}
First consider the expansions of $f(x)^{m+1}$ and $f(x)^{m+4}$. We are interested in the terms $x^{i(q-1)}$, but since $2(q-1)>6(m+1)=q+3$ for all $q>5$ and $2(q-1)>6(m+4)=q+21$ for all $q>23$ there are no terms of the form $x^{i(q-1)},i\geq2$. Hence, in each case, we only need to find the coefficient of $x^{q-1}=x^{6m+2}$. For the power $f(x)^{m+1}$ this amounts to solving the system 
\begin{equation*}
\begin{cases}
k_1+k_2+k_3+k_4+k_6=m+1, &(a)\\
k_1+2k_2+3k_3+4k_4+6k_6 =6m+2, & (b)
\end{cases}
\end{equation*}
for which the solutions are
\begin{equation}\label{eq:multinomalsolutions1}
 \begin{array}{c|c|c|c|c}
  k_1 & k_2 &k_3 &k_4& k_6 \\
  \hline
	0&1&0&0&m\\
	0&0&0&2&m-1
 \end{array}.
\end{equation}
Similarly, to find the coefficient of $x^{6m+2}$ in $f(x)^{m+4}$ we must solve the system
\begin{equation*}
\begin{cases}
k_1+k_2+k_3+k_4+k_6=m+4, & (a)\\
k_1+2k_2+3k_3+4k_4+6k_6 =6m+2. & (b)
\end{cases}
\end{equation*}
There are in fact 34 solutions. A partial list is
\begin{equation}\label{eq:multinomalsolutions2}
 \begin{array}{c|c|c|c|c}
  k_1 & k_2 &k_3 &k_4& k_6 \\
  \hline
	4&0&0&1&m-1\\
	\vdots&\vdots&\vdots&\vdots&\vdots \\
	3&0&1&2&m-2\\
	2&2&0&2&m-2\\
	2&1&2&1&m-2\\
	\vdots&\vdots&\vdots&\vdots&\vdots \\
	0&0&4&5&m-5\\
	0&0&2&8&m-6\\
	0&0&0&11&m-7
 \end{array}.
\end{equation}
Now \eref{eq:multinomalsolutions1} and \eref{eq:multinomalsolutions2} determine terms in $f(x)^{m+1}$ and $f(x)^{m+4}$, respectively, of the form
\[ \begin{pmatrix} m+1 \\ k_1,k_2,k_3,k_4,k_6 \end{pmatrix} a_1^{k_1}a_2^{k_2}a_3^{k_3}a_4^{k_4} x^{q-1} \text{ and } \begin{pmatrix} m+4 \\ k_1,k_2,k_3,k_4,k_6 \end{pmatrix}a_1^{k_1}a_2^{k_2}a_3^{k_3}a_4^{k_4} x^{q-1}. \]
For each solution in \eref{eq:multinomalsolutions1} and \eref{eq:multinomalsolutions2} we apply \tref{thm:lucas} to calculate the corresponding multinomial coefficient mod 3. 
We give two examples of this below. 

Consider the first solution in \eref{eq:multinomalsolutions2}. 
Using the 3-adic expansion of $m$ from \eref{eq:3adic_m} we have
\begin{align*}
m+4 &= 2 + 2 \cdot 3 + 1 \cdot 3^2 + \cdots + 1 \cdot 3^{r-3} + 1 \cdot 3^{r-2} \\
m-1 &= 0 + 1 \cdot 3 + 1 \cdot 3^2 + \cdots + 1 \cdot 3^{r-3} + 1 \cdot 3^{r-2}\\
4 &= 1 + 1 \cdot 3 + 0 \cdot 3^2 + \cdots + 0 \cdot 3^{r-3} + 0 \cdot 3^{r-2} \\
1 &= 1 + 0 \cdot 3 + 0 \cdot 3^2 + \cdots + 0 \cdot 3^{r-3} + 0 \cdot 3^{r-2}
\end{align*}
We observe that there are no `carries' in the sum $(m+4)=(m-1)+4+1$, so by \tref{thm:lucas} the multinomial coefficient $\left( \begin{smallmatrix} m+4 \\ 4,0,0,1,m-1 \end{smallmatrix} \right)$ is nonzero mod 3, and we have
\begin{align*}
\begin{pmatrix} m+4 \\ 4,0,0,1,m-1 \end{pmatrix} &\equiv \begin{pmatrix} 2 \\ 1,0,0,1,0 \end{pmatrix} \begin{pmatrix} 2 \\ 1,0,0,1,1 \end{pmatrix} \begin{pmatrix} 1 \\ 0,0,0,0,1 \end{pmatrix} \cdots \\
	&\quad \, \begin{pmatrix} 1 \\ 0,0,0,0,1 \end{pmatrix} \mod 3 \\
	&\equiv 2 \cdot 2 \mod 3\\
	&\equiv 1 \mod 3.
\end{align*}
Thus we get a term of the form $a_1^4 a_4 x^{q-1}$ in the expansion of $f(x)^{m+4}$ mod $(x^q-x)$. 

On the other hand, for the second solution listed in \eref{eq:multinomalsolutions2} we have
\begin{align*}
m+4 &= 2 + 2 \cdot 3 + 1 \cdot 3^2 + \cdots + 1 \cdot 3^{r-3} + 1 \cdot 3^{r-2} \\
m-2 &= 2 + 0 \cdot 3 + 1 \cdot 3^2 + \cdots + 1 \cdot 3^{r-3} + 1 \cdot 3^{r-2}\\
3 &= 0 + 1 \cdot 3 + 0 \cdot 3^2 + \cdots + 0 \cdot 3^{r-3} + 0 \cdot 3^{r-2} \\
1 &= 1 + 0 \cdot 3 + 0 \cdot 3^2 + \cdots + 0 \cdot 3^{r-3} + 0 \cdot 3^{r-2} \\
2 &= 2 + 0 \cdot 3 + 0 \cdot 3^2 + \cdots + 0 \cdot 3^{r-3} + 0 \cdot 3^{r-2}
\end{align*}
In this case there \emph{is} a carry in the sum $(m+4)=(m-2)+3+1+2$, so by \tref{thm:lucas} the multinomial coefficient $\left( \begin{smallmatrix} m+4 \\ 3,0,1,2,m-2 \end{smallmatrix} \right)$ is zero mod 3.

By similar computations (this process can be automated) we calculate the remaining binomial coefficients mod 3, and hence the coefficients of $x^{q-1}$ in $f(x)^{m+1}$ and $f(x)^{m+4}$ mod $x^q-x$. Equating them to zero we have, respectively,
\[
\begin{cases}
2 a_2 + a_4^2=0,&(1)\\
2 a_1^2 a_2^3 + a_1^3 a_2 a_3 + 2 a_2 a_3^6 + a_1^4 a_4 + a_3^6 a_4^2 + \\
\quad a_2^4 a_4^3 + a_2 a_3^4 a_4^3 + a_1 a_3^3 a_4^4 + 2 a_2^3 a_4^5 + a_1^2 a_4^6+2 a_4^9 a_2+a_4^{11}=0.&(2)
\end{cases}
\]
Note that the last two terms in $(2)$ do not appear in the case $r=3$. When $r>3$ these terms become, by $(1)$,
\[ 2 a_4^9 a_2+a_4^{11} = 3 a_4^{11}=0. \]
Hence we can omit the last two terms in $(2)$ in all cases.

Now consider the power $f(x)^{3m+1}$. We are interested in terms of the form $x^{i(q-1)}=x^{i(6m+2)}$. Now $\deg{(f(x)^{3m+1})}=6(3m+1)=3(6m+2)$, so we are interested in the coefficients of $x^{6m+2}$, $x^{2(6m+2)}$, $x^{3(6m+2)}$. By \eref{eq:multinomial3} this amounts to solving, for each $i \in \{ 1,2,3 \}$, the system
\begin{equation}\label{eq:multinomial_system}
\begin{cases}
k_1+k_2+k_3+k_4+k_6=3m+1, &(a) \\
k_1+2k_2+3k_3+4k_4+6k_6 =2i(3m+1). &(b)
\end{cases}
\end{equation}
For $i=3$ it is immediate that the only solution is
\begin{equation*}
 \begin{array}{c|c|c|c|c}
  k_1 & k_2 &k_3 &k_4& k_6 \\
  \hline
	0&0&0&0&3m+1
 \end{array}.
\end{equation*}
For $i=2$ there are many solutions, but we only solve for those for which the multinomial coefficient $\left( \begin{smallmatrix} 3m+1 \\ k_1,k_2,k_3,k_4,k_6 \end{smallmatrix} \right)$ is nonzero mod 3. To apply \tref{thm:lucas} we need expressions for the 3-adic expansions of $3m+1,k_1,k_2,k_3,k_4,k_6$. Now by \eref{eq:3adic_m} we have
\[ 3m+1=1+1\cdot 3+\cdots+1\cdot 3^{r-2}+1\cdot3^{r-1}, \]
and we will denote 3-adic expansions of $k_1,k_2,k_3,k_4,k_6$ by
\begin{gather*}
k_1 = b_{10} + b_{11} \cdot 3+\cdots+b_{1(r-2)}\cdot 3^{r-2}+b_{1(r-1)} \cdot 3^{r-1} \\
	\vdots \\
k_6 = b_{60} + b_{61} \cdot 3+\cdots+b_{6(r-2)}\cdot 3^{r-2}+b_{6(r-1)} \cdot 3^{r-1}
\end{gather*}
Then by \tref{thm:lucas} the multinomial coefficient $\left( \begin{smallmatrix} 3m+1 \\ k_1,k_2,k_3,k_4,k_6 \end{smallmatrix} \right)$ is nonzero mod 3 if and only if
\begin{equation}\label{eq:3adic_ks}
b_{1j}+b_{2j}+b_{3j}+b_{4j}+b_{6j}=1 \text{ for all } 0 \leq j \leq r-1.
\end{equation}
Then \eref{eq:3adic_ks} implies $(\ref{eq:multinomial_system}, a)$. Rewriting $(\ref{eq:multinomial_system}, b)$ with the 3-adic expansions we have
\begin{multline}\label{eq:3adic_multinomial}
(b_{10}+2b_{20}+3b_{30}+4b_{40}+6b_{60})+(b_{11}+2b_{21}+3b_{31}+4b_{41}+6b_{61})\cdot 3+ \cdots +\\
 (b_{1(r-1)}+2b_{2(r-1)}+3b_{3(r-1)}+4b_{4(r-1)}+6b_{6(r-1)})3^{r-1} \\
= 1+2\cdot 3+\cdots+2\cdot 3^{r-1}+3^r
\end{multline}
By \eref{eq:3adic_ks} and \eref{eq:3adic_multinomial} we must have $b_{2j}=0$ for all $j$. For clearly $b_{20}\neq 1$, and if $b_{2j}=1$ for some $1 \leq j \leq r-2$ then it follows that $b_{2(j+1)}=1$. We must then conclude that $b_{2j}=b_{2(j+1)}=\cdots=b_{2(r-1)}=1$, but then the \emph{LHS} of \eref{eq:3adic_multinomial} is too small. So we must have $b_{2j}=0$ for all $j$. By a similar argument we have $b_{1j}=0$ for all $j$, because if $b_{1j}=1$ for some $0 \leq j \leq r-2$ then we must have $b_{2(j+1)}=1$. Hence \eref{eq:3adic_multinomial} reduces to
\begin{multline}\label{eq:3adic_multinomial2}
(3b_{30}+4b_{40}+6b_{60})+ (3b_{31}+4b_{41}+6b_{61}) \cdot 3 + \cdots +\\
(3b_{3(r-1)}+4b_{4(r-1)}+6b_{6(r-1)})\cdot 3^{r-1} \\
= 1+2\cdot 3+\cdots+2\cdot 3^{r-1} +3^r
\end{multline}
It is clear that the only possible solution satisfying \eref{eq:3adic_ks} and \eref{eq:3adic_multinomial2} is $b_{40}=b_{41}=\cdots=b_{4(r-1)}=1$ with all other terms zero. Hence $k_4=1+3+\cdots+3^{r-1}=3m+1$ and the solution is given by
\begin{equation*}
 \begin{array}{c|c|c|c|c}
  k_1 & k_2 &k_3 &k_4& k_6 \\
  \hline
	0&0&0&3m+1&0
 \end{array}
\end{equation*}

For $i=1$ a similar (but easier) argument shows that the only solution with nonzero multinomial coefficient is $k_2=1+3+\cdots+3^{r-1}=3m+1$ and the solution is given by 
\begin{equation*}
 \begin{array}{c|c|c|c|c}
  k_1 & k_2 &k_3 &k_4& k_6 \\
  \hline
	0&3m+1&0&0&0
 \end{array}
\end{equation*}
Hence the term in $x^{q-1}$ in the reduction of $f(x)^{3m+1} \mod (x^q-x)$ is given by
\[ \left( \left( \begin{smallmatrix} 3m+1 \\ 0,0,0,0,3m+1 \end{smallmatrix} \right) + \left( \begin{smallmatrix} 3m+1 \\ 0,0,0,3m+1,0 \end{smallmatrix} \right) a_4^{3m+1}  + \left( \begin{smallmatrix} 3m+1 \\ 0,3m+1,0,0,0 \end{smallmatrix} \right) a_2^{3m+1} \right) x^{q-1}. \]
Thus \tref{thm:hermite} requires that
\[ 1 + a_4^{3m+1} + a_2^{3m+1} =0. \quad(3) \]
\end{proof}

The following theorem rewards the lengthy and tricky calculations in the previous lemma by reducing the characterisation problem to the case $a_5 \neq 0$. 
\begin{theorem}\label{thm:a5not0}
If $r>2$ then there are no  \PP s of $\mathbb{F}_{3^r}$ of the form
\[ 
f(x)=x^6+a_4 x^4 + a_3 x^3 + a_2 x^2 + a_1x .
\]
\end{theorem}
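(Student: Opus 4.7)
The plan is to extract clean algebraic consequences from the three identities in \lref{lem:a5not0equations}, exploit a characteristic-$3$ factorization of $f$, and then push one more Hermite identity to finish.

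First I would simplify the three equations. Equation $(1)$ gives $a_2 = a_4^2$. Substituting into $(3)$ produces $1 + a_4^{3m+1} + a_4^{6m+2} = 0$; the case $a_4 = 0$ forces $1 = 0$, so $a_4 \neq 0$, and then $a_4^{6m+2} = a_4^{q-1} = 1$, so $(3)$ collapses in characteristic $3$ to $a_4^{3m+1} = 1$. Since $3m+1 = (q-1)/2$, this says that $a_4$ is a nonzero square in $\mathbb{F}_q$. Plugging $a_2 = a_4^2$ into $(2)$, the terms $3 a_1^2 a_4^6$ and $3 a_3^6 a_4^2$ vanish in characteristic $3$, and after dividing through by $a_4 \neq 0$ we are left with
\[ a_1^4 + a_1^3 a_3 a_4 + a_1 a_3^3 a_4^3 + a_3^4 a_4^4 + 2 a_4^{10} = 0. \]
The key observation is that in characteristic $3$ the first four terms factor via $X^3 + Y^3 = (X+Y)^3$ as $(a_1 + a_3 a_4)^4$, so
\[ (a_1 + a_3 a_4)^4 = a_4^{10}; \]
in particular $c := a_1 + a_3 a_4 \neq 0$.

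Next I would exploit a clean factorization available in characteristic $3$. Since $(x^2 - a_4)^2 = x^4 + a_4 x^2 + a_4^2$ in characteristic $3$, and $a_2 = a_4^2$, one may write
\[ f(x) = \phi(x)^2 + a_3 \phi(x) + c x, \qquad \phi(x) := x^3 - a_4 x. \]
Because $\phi$ is $\mathbb{F}_3$-additive, a short calculation yields
\[ f(x) - f(y) = \phi(x-y)\bigl(\phi(x+y) + a_3\bigr) + c(x-y). \]
Consequently a pair $x \neq y$ with $f(x) = f(y)$ is equivalent, after setting $u := x+y$ and $v := x-y$, to either $\phi(v) = 0$ (which forces $cv = 0$, impossible since $c \neq 0$ and $v \neq 0$) or
\[ \phi(u) = -\frac{c}{v^2 - a_4} - a_3, \qquad v^2 \notin \{0, a_4\}. \]

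The main obstacle is excluding any solution to the coefficient constraints that could actually produce a \PP. My plan is to impose one further Hermite identity. The power $t = m+3$ is admissible since $m+3 \equiv 1 \pmod 3$ by \lref{lem:m_inequality3}, and for $q \geq 27$ the bound $2(q-1) > 6(m+3)$ ensures that only the coefficient of $x^{q-1}$ in $f^{m+3}$ contributes. Computing this coefficient via the multinomial theorem and \tref{thm:lucas} --- exactly as done for $f^{m+4}$ and $f^{3m+1}$ in the proof of \lref{lem:a5not0equations} --- yields a fourth polynomial identity in $a_1, a_3, a_4$. Combining this with $a_2 = a_4^2$, $a_4^{(q-1)/2} = 1$, and $(a_1 + a_3 a_4)^4 = a_4^{10}$, I expect to pin down $a_3$ as a specific function of a square root of $a_4$ and then derive an algebraic impossibility, for instance a polynomial relation in $a_3 a_4^{-5/2}$ with no root in $\mathbb{F}_q$. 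The tedious portion is the multinomial bookkeeping needed to extract the fourth coefficient cleanly; the structural payoff mirrors the finite-candidate arguments in the previous two theorems for $r = 2$.
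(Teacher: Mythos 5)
There is a genuine gap, and it originates at your very first substitution. You have taken equation $(2)$ exactly as displayed in the statement of \lref{lem:a5not0equations}, but that display is missing the term $a_2^4a_4^3$: the term is present in the lemma's own derivation (it comes from the solution $k_1=0$, $k_2=4$, $k_3=0$, $k_4=3$, $k_6=m-3$ of the $f(x)^{m+4}$ system, whose multinomial coefficient is $1 \bmod 3$), and it is present in the version of $(2)$ quoted at the start of the paper's proof of \tref{thm:a5not0}. With that term restored, substituting $a_2=a_4^2$ makes the two degree-$11$ contributions cancel ($a_4^{11}+2a_4^{11}=0$) alongside $3a_1^2a_4^6=0$ and $3a_3^6a_4^2=0$, and what survives is exactly $a_4\,(a_1+a_3a_4)^4=0$. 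Since $a_4\neq 0$, this forces $a_1=2a_3a_4$, and then $f$ vanishes at both $0$ and $a_4^{1/2}$ (which lies in $\mathbb{F}_{3^r}$ because, as you correctly showed, $a_4$ is a nonzero square), so $f$ is not injective and the theorem follows immediately. Your leftover $2a_4^{10}$, and hence your conclusion that $c=a_1+a_3a_4\neq 0$, is an artefact of the typo in the lemma's statement, and it points you in precisely the wrong direction: the paper's whole proof consists of showing $c=0$.

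As a consequence the remainder of your plan, while containing some correct and even attractive observations --- the rewriting $f(x)=\phi(x)^2+a_3\phi(x)+cx$ with $\phi(x)=x^3-a_4x$ additive is valid given $a_2=a_4^2$, and so is the collision identity $f(x)-f(y)=\phi(x-y)\bigl(\phi(x+y)+a_3\bigr)+c(x-y)$ --- never closes. Your final paragraph is explicitly conjectural: you propose to extract a fourth identity from the power $m+3$ (which is indeed an admissible Hermite power, since $m+3\equiv 1 \bmod 3$) and ``expect'' it to yield an algebraic impossibility, but you neither carry out the multinomial bookkeeping nor exhibit the contradiction. A proof cannot end with an expectation. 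The repair is simply to use the corrected equation $(2)$; then $c=0$ falls out at once, no further Hermite power is needed, and the characteristic-$3$ factorisation machinery becomes unnecessary.
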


\begin{proof}
If $f(x)$ is a \PP\ of $\mathbb{F}_{3^r}$ then by \lref{lem:a5not0equations} we have
\[
\begin{cases}
a_2 = a_4^2, &(1)\\
2 a_1^2 a_2^3 + a_1^3 a_2 a_3 + 2 a_2 a_3^6 + a_1^4 a_4 +  \\
\quad a_3^6 a_4^2 + a_2^4 a_4^3 + a_2 a_3^4 a_4^3 + a_1 a_3^3 a_4^4 + 2 a_2^3 a_4^5 + a_1^2 a_4^6=0,&(2)\\
1 + a_4^{3m+1} + a_2^{3m+1} =0.&(3)
\end{cases}
\]

Substituting $(1)$ into $(3)$ we have
\begin{equation}\label{eq:a4square}
1 + a_4^{3m+1} + a_4^{6m+2} =0.
\end{equation}
This shows that $a_4 \neq 0$, so we have $a_4^{6m+2}=a_4^{q-1}=1$. We may therefore write \eref{eq:a4square} as
\[ a_4^{3m+1}=a_4^{(q-1)/2}=1. \]
By \eref{eq:squares}, $a_4$ is a nonzero square in $\mathbb{F}_{3^r}$. 

Substituting $(1)$ into $(2)$ we have
\begin{align*}
0 &= a_1^4 a_4 + a_1^3 a_3 a_4^2 + a_1 a_3^3 a_4^4 + a_3^4 a_4^5 \\
	&=a_4(a_1^4 + a_1^3 a_3 a_4 + a_1 a_3^3 a_4^3 + a_3^4 a_4^4) \\
	&=a_4(a_1+a_3a_4)^4.
\end{align*}
Since $a_4 \neq 0$ we have $a_1=2a_3a_4$. But the polynomial
\[ x^6 + a_4 x^4 + a_3 x^3 + a_4^2 x^2 + 2a_3a_4 x\]
has roots at 0 and $a_4^{1/2} \neq 0$, thus failing to be injective.
\end{proof}

Since we know that $a_5 \neq 0$ we will therefore let $a_4=0$ by \lref{lem:a5ora4=0}. We first give a full characterisation for the case $r=3$.
\begin{theorem}
The complete list of \PP s of $\mathbb{F}_{3^3}$ of the form 
\[
f(x)=x^6+a_5 x^5 +a_3x^3+a_2x^2+a_1x
\]
with $a_5 \neq 0$ is given by
\[ x^6+a x^5 + 2 a^4 x^2 \quad (a\neq0). \]
\end{theorem}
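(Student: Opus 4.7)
The plan is to mimic the approach of \tref{thm:f9a4zero}, applying Hermite's criterion (\tref{thm:hermite}) to the lowest useful powers of $f$ to derive a system of polynomial equations in $a_1, a_2, a_3, a_5$, and then solve it. For $q=27$ we have $m=4$, so the first power of $f$ whose degree meets or exceeds $q-1=26$ is $f(x)^5$, and the admissible low powers not divisible by $p=3$ are $t=5,7,8$. For each of these, $\deg f(x)^t \leq 48 < 52 = 2(q-1)$, so the only term we need to track during reduction modulo $x^{27}-x$ is the coefficient of $x^{26}$.

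First I would expand $f(x)^t$ for $t \in \{5,7,8\}$ via the multinomial theorem (\tref{thm:multinomial}), enumerate the nonnegative integer solutions $(k_1, k_2, k_3, k_5, k_6)$ to the pair of linear equations $k_1+k_2+k_3+k_5+k_6 = t$ and $k_1 + 2k_2 + 3k_3 + 5k_5 + 6k_6 = 26$, and use \tref{thm:lucas} to retain only those tuples whose multinomial coefficient is nonzero modulo $3$. Equating the surviving contribution in each case to zero yields three polynomial equations in $a_1, a_2, a_3, a_5 \in \mathbb{F}_{27}$.

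I expect these equations to force $a_1=a_3=0$ together with $a_2 = 2a_5^4$, singling out the candidate family $f(x) = x^6 + ax^5 + 2a^4 x^2$. Where the algebra branches, I anticipate disposing of extraneous solutions by the injectivity argument already used at the end of \tref{thm:f9a4zero} and \tref{thm:a5not0}, observing that such polynomials acquire a second zero in $\mathbb{F}_{27}$ besides the origin and hence cannot permute the field. Having identified the candidate family, I would verify it truly defines a permutation polynomial by checking that $f(x)^t \bmod (x^{27}-x)$ has degree at most $25$ for every remaining admissible exponent $t \in \{10, 11, 13, 14, 16, 17, 19, 20, 22, 23, 25\}$; this is a finite, routine computation.

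The main obstacle will be the combinatorial and modular bookkeeping: each of the three defining equations arises from enumerating several dozen multinomial contributions. Unlike the $\mathbb{F}_{3^2}$ case, the $3$-adic expansions of $m+1=5$, $m+3=7$, and $m+4=8$ span more digits and interact with those of the $k_i$ in a more subtle way, so \tref{thm:lucas} must be applied carefully when determining which multinomial coefficients survive modulo $3$. Once the system is written down, the algebraic reduction to the claimed family should proceed by the same style of manipulation used throughout \secref{sec:3k}.
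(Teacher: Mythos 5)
Your overall strategy (Hermite's criterion via \tref{thm:hermite}, multinomial expansion, \tref{thm:lucas} for the coefficients mod $3$) matches the paper's, and your choice of powers $t=5,7,8$ (that is, $m+1$, $m+3$, $m+4$ with $m=4$) reproduces the paper's first three necessary equations. But there is a genuine gap at the crux of the argument: you expect those three equations to force $a_1=a_3=0$, and they do not. The paper's equations $(1)$--$(3)$ only yield $a_2=2(a_3a_5+a_5^4)$ and $a_1=a_3a_5^2+2a_3^9a_5^4$, and then, writing $a_3=\eta a_5^3$, the third equation factors (after multiplying by $\eta^9$ and using $\eta^{26}=1$) as $(\eta+2)^{18}(\eta^2+1)^9(\eta^3+2\eta^2+2\eta+2)^9=0$. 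The branch $\eta=1$ is killed by your injectivity trick (the polynomial gains a root at $-a_5$), and $\eta^2=-1$ is impossible in $\mathbb{F}_{3^3}$; but the cubic $\eta^3+2\eta^2+2\eta+2$ is irreducible over $\mathbb{F}_3$ and therefore has three roots in $\mathbb{F}_{27}$, giving a surviving family of candidates with $a_3\neq 0$ that your plan does not address. These are not disposed of by exhibiting a second zero; the paper eliminates them by invoking a \emph{fourth} Hermite power, $t=3m+1=13$, whose $x^{26}$-coefficient condition, combined with the relation $\eta^3=\eta^2+\eta+1$, reduces to $2\eta^4(\eta+2)^2(\eta^2+1)=0$, a contradiction.

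In short, the power $t=13$ must enter the derivation phase, not merely the final verification of the surviving family as in your plan. Without it (or some substitute argument that actually rules out the three roots of $\eta^3+2\eta^2+2\eta+2$ in $\mathbb{F}_{27}$), the proof does not close. The rest of your outline --- the bound $\deg f^t\leq 48<2(q-1)$ so only $x^{26}$ matters, the Lucas-type bookkeeping, and the final check of the remaining admissible exponents for $x^6+ax^5+2a^4x^2$ --- is consistent with what the paper does.
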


\begin{proof}
If $f(x)$ is a \PP\ of $\mathbb{F}_{3^3}$, then by \tref{thm:hermite} the reductions of $f(x)^5$, $f(x)^7$, $f(x)^8$ and $f(x)^{13}$ modulo $x^{27}-x$ must have degree $\leq 25$. These require, respectively,
\[
\begin{cases}
a_5^4+a_3 a_5+a_2=0, &(1)\\
a_2^3 a_5^4+ a_2 a_3^3 a_5^3+ a_1^3 a_5+ a_2^4  =0, &(2)\\
2 a_1^2 a_2^3 + a_1^3 a_2 a_3 + 2 a_2 a_3^6 + a_2^3 a_3^3 a_5 + \\ \quad 2 a_3^7 a_5 +
 a_1^3 a_2 a_5^3 + a_2^4 a_3 a_5^3 +2 a_1^2 a_3^3 a_5^3 + a_1^3 a_3 a_5^4=0, &(3) \\
1 + a_2^{13} + a_1^3 a_2^9 a_5 + a_1^9 a_2 a_5^3 + a_1 a_2^3 a_5^9=0.&(4)
\end{cases}
\]
Applying (1) to (2) we have 
\begin{align*}
0 &=a_5(2 a_1^3 + a_3^3 a_5^6 + 2 a_3 a_5^{12}\\
	&= a_5 (2 a_1 + a_3 a_5^2 + 2 a_3^9 a_5^4)^3.
\end{align*}
Hence 
\begin{equation}\label{eq:a1_F27}
a_1=a_3 a_5^2 + 2 a_3^9 a_5^4.
\end{equation}

If $a_3=0$ then by \eref{eq:a1_F27} we have $a_1=0$. By checking the remaining powers in Hermite's criterion, the resulting family
\[ x^6+a x^5 + 2 a^4 x^2 \quad (a\neq0) \]
are shown to be \PP s.

Now suppose that $a_3\neq0$. Applying (1) and \eref{eq:a1_F27} to (3) we have
\begin{align*}
0 &= 2 a_3^3 a_5^{13} + a_3^{10} a_5^{18} + 2 a_3 a_5^{19} + a_3^{18} a_5^{20} \\
	&=a_3 a_5^{13} (2 a_3^2 + a_3^9 a_5^5 + 2 a_5^6 + a_3^{17} a_5^7).
\end{align*}
Dividing by $a_3 a_5^{13} \neq 0$ and letting $a_3 = \eta a_5^3$, this becomes 
\[  \eta^{17} + \eta^9 + 2 \eta^2 +2. \]
Multiplying this equation by $\eta^9$ and letting $\eta^{26}=1$ and $\eta^{11}=\eta^{52} \cdot \eta^{11}=\eta^{63}$, we have
\begin{align*}
0 &=  \eta^{63} + 2 \eta^{18} + \eta^9 +2 \\
	&=(\eta +2)^{18} (\eta^2 +1)^9 (\eta^3 + 2 \eta^2+2 \eta +2)^9
\end{align*}
Now the root $\eta =1$ can be ignored since the polynomial 
\[ x^6+ a_5 x^5 + a_5^3 x^3 +a_5^4 x^2 \]
has roots at 0 and $-a_5 \neq 0$, and $\eta^2=-1$ is impossible since $\mathbb{F}_{3^3}$ is a degree 3 extension of $\mathbb{F}_3$. So we must have
\[ \eta^3 = \eta^2+\eta +1. \]
Substituting $a_1=a_5^5(\eta + 2 \eta^9),a_2=2 a_5^4(1+ \eta)$ and $a_3=a_5^3 \eta$ into (4) and simplifying gives 
\[ 2 \eta (\eta^3+ 2 \eta+2) (\eta^3 + \eta^2+2) (\eta^3 + \eta^2 + 2 \eta+1) (\eta^3 + 2 \eta^2+1) =0. \]
But using $\eta^3 = \eta^2+\eta +1$ this simplifies to
\[ 2 \eta^4 (\eta +2)^2 (\eta^2+1)=0, \]
in contradiction to $\eta \not\in \{ 0, 1 \}$ and $\eta^2 \neq -1$. 
\end{proof}

Finally, we show that there are no \PP s of $\mathbb{F}_{3^r}$ when $r>3$. 
\begin{theorem}\label{thm:nopps3k}
If $r>3$ and $a_5 \neq 0$ then there are no \PP s of $\mathbb{F}_{3^r}$ of the form
\[ 
f(x)=x^6+a_5 x^5 + a_3 x^3 + a_2 x^2 + a_1x.
\]
\end{theorem}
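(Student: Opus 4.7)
The plan is to mirror the strategy that worked in \lref{lem:a5not0equations} and \tref{thm:a5not0}: use several suitably chosen powers in Hermite's criterion to generate necessary polynomial equations in $a_1,a_2,a_3,a_5$, then show that the resulting system has no solution in $\mathbb{F}_{3^r}$ with $a_5 \neq 0$. Throughout we keep the assumption $r>3$, so that $q = 3^r = 6m+3$ with $m = 1 + 3 + \cdots + 3^{r-2}$, and in particular $m \equiv 1 \pmod 3$ and $m \geq 13$.

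First, I would collect the low-degree equations. By \lref{lem:m_inequality3}, the integers $m+1$, $m+2$, $m+4$, $m+5$ all lie strictly between $2$ and $q-2$ and are coprime to $3$, so \tref{thm:hermite} forces the coefficient of $x^{q-1}$ to vanish in the reduction of each $f(x)^{m+1}, f(x)^{m+2}, f(x)^{m+4}, f(x)^{m+5} \pmod{x^q - x}$. Using \tref{thm:multinomial} to expand (now with $a_4 = 0$ and $k_5$ in play), I would solve the systems
\begin{equation*}
\begin{cases}
k_1 + k_2 + k_3 + k_5 + k_6 = m + j, \\
k_1 + 2 k_2 + 3 k_3 + 5 k_5 + 6 k_6 = 6m + 2,
\end{cases}
\end{equation*}
for $j = 1, 2, 4, 5$, and for each solution apply \tref{thm:lucas} to the $3$-adic expansion of $m+j$ to discard the solutions whose multinomial coefficient is $0 \bmod 3$. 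Because $r>3$ the relevant $3$-adic digits of $m+j$ are controlled by the fixed low-order part and the `tail' of $1$'s is long enough that the surviving equations do not degenerate. This step is mechanical but tedious; it produces four polynomial identities $E_1, E_2, E_4, E_5 = 0$ in $a_1, a_2, a_3, a_5$.

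Next, to pin down the high-degree behaviour analogous to condition $(3)$ in \lref{lem:a5not0equations}, I would apply Hermite's criterion with $t = 3m+1$ (and possibly $t = 3m+2$ or $5m + c$ for a small $c$). The expansion $f(x)^{3m+1}$ has degree $3(q-1)$ so three multiples of $q-1$ must be tracked, but as in the proof of \lref{lem:a5not0equations}, the $3$-adic digit analysis via \tref{thm:lucas} collapses the sums dramatically: only the `pure' solutions $k_i = 3m+1$ for a single $i$ (and the monomial solution) survive, giving a clean identity of the form
\[
1 + a_2^{3m+1} + \alpha \, a_5^{3m+1} + \beta \, a_1^{3m+1} \, \ldots \, = 0,
\]
from which one extracts that a certain expression in $a_2, a_5$ (and possibly $a_1$) is a nonzero square; that is, lies in $\mathbb{F}_{q}^{\times 2}$ by \eref{eq:squares}. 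This is the most delicate step: getting the right handful of $t$'s so that the resulting equations form an overdetermined system whose contradiction is visible.

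Finally, I would eliminate variables. From the $t = m+1$ equation one obtains $a_2$ (or a simple polynomial in $a_2$) as an explicit expression in $a_5$, just as $(1)$ in \lref{lem:a5not0equations} gave $a_2 = a_4^2$; from the $t = m+2$ equation one pins down $a_3$ in terms of $a_5$. Substituting back into $E_4$ and $E_5$ and using $(a+b)^3 = a^3 + b^3$ in characteristic $3$ to collapse fourth powers to linear expressions, one should arrive at a factorisation of the shape $a_5^{N}(a_1 + \gamma a_3 a_5)^{3^s} = 0$, forcing $a_1 = -\gamma a_3 a_5$. At that point the polynomial $f(x)$ acquires a nonzero root other than $0$ (exactly as in the closing step of \tref{thm:a5not0}, where $f$ had roots $0$ and $a_4^{1/2}$), contradicting injectivity; alternatively the high-$t$ identity will be incompatible with $a_2$ being a nonzero square.

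The main obstacle will be the bookkeeping: listing the relevant solutions of the two-equation multinomial systems and correctly determining which surviving multinomial coefficients are nonzero $\bmod 3$ via \tref{thm:lucas}, since the $3$-adic digits of $m+j$ now involve nontrivial carries for various choices of $j$. The cleanest way around this is to separate the $a_5 = 0$ case (already dispatched, after relabeling, by \tref{thm:a5not0}) from the present case via \lref{lem:a5ora4=0} and, within the present case, to process the $t = m+1, m+2$ equations \emph{first} to eliminate $a_2$ and $a_3$, thereby reducing every subsequent Hermite condition to a single-variable identity in $a_5$ (or in the ratio $a_1/a_5^5$) that patently has no nonzero solution in $\mathbb{F}_{3^r}$.
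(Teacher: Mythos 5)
Your overall strategy is the right one and matches the paper's in spirit: apply Hermite's criterion to a few powers $f(x)^{m+j}$, expand with the multinomial theorem and \tref{thm:lucas}, eliminate coefficients, and conclude by exhibiting a second root of $f$ (the paper indeed closes by showing $f(0)=f(-a_5)=0$, exactly the kind of contradiction you anticipate). However, your choice of exponents contains a concrete error that would derail the execution. Since $m=1+3+\cdots+3^{r-2}\equiv 1 \pmod 3$ (\lref{lem:m_inequality3}), the integers $m+2$ and $m+5$ are \emph{divisible} by $3$, contrary to your claim that all of $m+1,m+2,m+4,m+5$ are coprime to $3$. Worse, the conditions they impose are vacuous: $\sum_{c} f(c)^{m+2} = \bigl(\sum_{c} f(c)^{(m+2)/3}\bigr)^3$, and since $\deg\bigl(f^{(m+2)/3}\bigr)=2(m+2)<6m+2=q-1$, the inner sum is automatically zero. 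So there is no ``$t=m+2$ equation'' from which to pin down $a_3$, and the same applies to $t=m+5$. The paper instead uses the powers $m+1$ and $m+3$ (both $\not\equiv 0 \bmod 3$), which yield the two equations
\[
a_2 = 2a_3 a_5 + 2a_5^4, \qquad a_2^4 +a_1^3 a_5 + a_2 a_3^3 a_5^3 + a_2^3 a_5^4 + a_2 a_5^{12} + 2a_5^{16}=0 ,
\]
and these two alone suffice.

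A second, smaller miscalibration: you expect the elimination to express $a_2$ and $a_3$ purely in terms of $a_5$ and to need a high power such as $3m+1$ (as in condition $(3)$ of \lref{lem:a5not0equations}) to finish. In fact $a_3$ is never determined: the first equation gives $a_2$ in terms of \emph{both} $a_3$ and $a_5$, and substituting into the second collapses (via the Frobenius identity you mention) to $2a_5\,(a_1+2a_3a_5^2+a_5^5)^3=0$, forcing $a_1=a_3a_5^2+2a_5^5$ with $a_3$ still free. One then checks directly that $-a_5\neq 0$ is a root of $f$, giving the contradiction without any quadratic-character argument or any power of size $3m+1$. So the machinery you propose is heavier than needed, and the specific exponents you chose would not produce the equations required to run it.
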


\begin{proof}
Suppose $f(x)$ is a \PP\ of $\mathbb{F}_{3^r}$ and $r>3$. By \lref{lem:m_inequality3} we have $1 \leq m \leq q-8$ and $m \equiv 1 \mod 3$, so by \tref{thm:hermite} the reductions of $f(x)^{m+1}$ and $f(x)^{m+3}$ modulo $x^q-x$ have degree $\leq q-2$. By similar calculations to \lref{lem:a5not0equations} we determine that these conditions require
\[
\begin{cases}
a_2 = 2a_3 a_5 + 2a_5^4, &(1)\\
a_2^4 +a_1^3 a_5 + a_2 a_3^3 a_5^3 + a_2^3 a_5^4 + a_2 a_5^{12} + 2a_5^{16}=0. &(2)
\end{cases}
\]
Substituting (1) into (2) we have
\begin{align*}
0 &= 2 a_1^3 a_5 + a_3^3 a_5^7 + 2 a_5^{16}\\
	&= 2a_5 (a_1 +2a_3 a_5^2+a_5^5)^3.
\end{align*}
Since $a_5 \neq 0$ we must have $a_1=a_3a_5^2+2a_5^5$. But the polynomial 
\[ x^6+a_5 x^5 + a_3 x^3 + (2a_3 a_5 + 2a_5^4) x^2 + (a_3a_5^2+2a_5^5) x\]
has roots 0 and $-a_5 \neq 0$, thus failing to be injective, so is not a \PP.
\end{proof}

\section{Normalised PPs of Degree 6}
In Theorems \ref{thm:nopps>11} - \ref{thm:nopps3k} we derived the complete classification of degree 6 \PP s of \fq\ up to transformations of the form 
\[ c f(x+b) +d, \text{ where } b,c,d \in \mfq, c\neq 0. \]
These are listed in Table \ref{tab:deg6_extrareduced}.

\begin{table}[h!]
\setlength{\abovecaptionskip}{1pt}
\begin{my_enumerate}
\item $x^6 \pm 2x$, $q=11$.
\item $x^6 \pm 4x$, $q=11$.
\item $x^6 \pm a^2x^3+ax^2 \pm 5x$, $a$ a nonzero square, $q=11$.
\item $x^6 \pm 4 a^2 x^3+ax^2 \pm 4x$, $a$ not square, $q=11$.
\item $x^6+a^2x^4+a^7bx^3+a^4x^2+a(2b+1)x$, $a \neq 0, b\in \{0,1,2^{1/2},1+2^{1/2} \}$, $q=3^2$.
\item $x^6+ax^5+a^3x^3+2a^4x^2+2a^5x$, $a \neq 0$, $q=3^2$.
\item $x^6+ax^5+\varphi a^3x^3+2\varphi a^4x^2+2^{1/2}a^5x$, $a\neq 0,\varphi= \pm (1-2^{1/2})$, $q=3^2$.
\item $x^6+ax^5+2a^3x^3+a^4x^2+(2+2^{1/2})a^5x$, $a \neq 0$, $q=3^2$.
\item $x^6+ax^5+2a^4x^2$, $a \neq 0$, $q=3^3$.
\end{my_enumerate}
\caption{\emph{Classification of degree 6 \PP s of \fq\ ($q$ odd) up to linear transformations.}}
\label{tab:deg6_extrareduced}
\end{table}
\pagebreak

We remark that this list agrees with the original list in \cite{dickson1897II}, except that Dickson fails to specify that $a$ is not allowed to be zero in (3) and (6-9). Additionally, the family of \PP s given in (5) has a cleaner parametrisation than in the original list.

We note, however, that this is \emph{not} the complete list of normalised degree 6 \PP s in the sense of \dref{def:normalised}, because in the cases $q=3^2$ and $q=3^3$ we used a second linear transformation to ensure that the coefficient of $x^{5}$ or $x^4$ was zero. So, for example, the polynomial
\[f(x) = x^6+x^5+x^4+x^2 \in \mathbb{F}_{3^2}[x] \]
is a normalised \PP\ of $\mathbb{F}_{3^2}$ not appearing in the above list. Similarly, the following polynomial is a normalised \PP\ of $\mathbb{F}_{3^3}$ not in the list:
\[ g(x)=x^6+x^5+2x^4 \in \mathbb{F}_{3^3}[x]. \]
So, although the above list completely classifies degree 6 \PP s up to linear transformations, it is incorrect to call it a complete list of \emph{normalised} \PP s. This is the claim made by Dickson in \cite{dickson1897II} when he said his list was a \emph{complete list of reduced quantics}. Although the distinction is minor, we suggest this ambiguity has caused confusion and is the reason that Dickson's characterisation has been questioned.

We now convert the list (1-9) into the complete list of normalised \PP s of degree 6, which we feel is necessary for the sake of consistency and avoiding future confusion. We will then be able to insert the degree 6 classification unambiguously into the complete list of normalised (in the sense of the globally accepted definition) \PP s of degree up to 6. Recall from Section \ref{sec:3k} that in the case $p=3$ and $a_5 \neq 0$ we used a linear transformation to remove the $x^4$ term (see \lref{lem:a5ora4=0}). To recover the list of normalised \PP s represented by (6-9) we must apply transformations of the form
\[ f(x) = g(x+b) +c, \]
where $b$ is arbitrary and $c$ is chosen so that the resulting polynomial $f(x)$ satisfies $f(0)=0$. Applying these transformations to the polynomials (6-9), and simplifying, we obtain the complete list of normalised \PP s of degree 6 in Table \ref{tab:deg6_normalised}. See Appendix \ref{appendix:list} for the complete list of normalised \PP s of degree $\leq 6$.
\begin{table}[h!]
\begin{my_enumerate}
\item $x^6 \pm 2x$, $q=11$.
\item $x^6 \pm 4x$, $q=11$.
\item $x^6 \pm a^2x^3+ax^2 \pm 5x$, $a$ a nonzero square, $q=11$.
\item $x^6 \pm 4 a^2 x^3+ax^2 \pm 4x$, $a$ not square, $q=11$.
\item $x^6+a^2x^4+a^7bx^3+a^4x^2+a(2b+1)x$, $a \neq 0, b\in \{0,1,2^{1/2},1+2^{1/2} \}$, $q=3^2$.
\item $x^6 + a x^5 +2 a b x^4 + (a^3 + a b^2 + 2 b^3) x^3 + (2 a^4 + a b^3) x^2+ (2 a^5 + a^4 b + 2 a b^4) x$, $a \neq 0$, $b$ arbitrary, $q=3^2$.
\item $ x^6+a x^5 +2 a b x^4 + (a b^2 + 2 b^3 + a^3 \varphi)x^3 + 
  (a b^3 + 2a^4 \varphi)x^2 + (2^{1/2} a^5 + 2 a b^4 + a^4 b \varphi)x$, $a\neq 0$, $b$ arbitrary, $\varphi= \pm (1-2^{1/2})$, $q=3^2$.
\item $x^6 + a x^5 + 2 a b x^4 + (2 a^3 + a b^2 + 2 b^3) x^3 + (a^4 + a b^3) x^2 +(2 a^5 + 2^{1/2} a^5 + 2 a^4 b + 2 a b^4) x$, $a \neq 0$, $b$ arbitrary, $q=3^2$.
\item $x^6+  a x^5 + 2 a b x^4 +(a b^2 + 2 b^3) x^3 + (2 a^4 + a b^3) x^2 + (a^4 b + 2 a b^4) x$, $a \neq 0$, $b$ arbitrary, $q=3^3$.
\end{my_enumerate}
\caption{\emph{Complete list of normalised degree 6 \PP s of \fq\ ($q$ odd).}}
\label{tab:deg6_normalised}
\end{table}

\chapter{Orthomorphism Polynomials}
\section{Orthomorphism Polynomials of Finite Fields}
We begin by defining orthomorphisms for general finite groups $G$.
\begin{newdef}
Let $G$ be a finite group. Then an \emph{orthomorphism} of $G$ is a permutation $\Phi$ of $G$ such that the map $c \mapsto c^{-1} \Phi(c)$ is also a permutation of $G$.
\end{newdef}

Orthomorphisms have also been referred to as \emph{orthogonal mappings}. There are numerous reasons to be interested in orthomorphisms, for example for the construction of orthogonal Latin squares.

A closely related concept is that of a \emph{complete mapping} of $G$, which is a permutation $\Phi$ such that the map $c \mapsto c \Phi(c)$ is a permutation of $G$. Then $\Phi$ is an orthomorphism of $G$ if and only if the map $c \mapsto c^{-1} \Phi(c)$ is a complete mapping of $G$ and a complete mapping of $G$ if and only if the map $c \mapsto c \Phi(c)$ is an orthomorphism of $G$.

In this paper we only consider orthomorphisms of the additive group $\mfq^{+}$ of a finite field; the interested reader may refer to \cite{evans1992} for orthomorphisms of general groups. Note that by \lref{lem:unique_poly} we may assume that an orthomorphism of $\mfq^{+}$ is a polynomial $f \in \mfqx$. We will call such a polynomial an \emph{orthomorphism polynomial}. It is clear that $f$ is an orthomorphism polynomial of \fq\ if and only if $f(x)$ and $f(x)-x$ are both \PP s of \fq. 

We begin by stating a fundamental result on the degree of an orthomorphism polynomial of \fq. We already know (by \cref{cor:max_degree}) that the reduction modulo $x^q-x$ of a \PP\ of \fq\ has degree at most $q-2$. In fact for orthomorphism polynomials we have the following stronger bound. 
\begin{theorem}
If $q>2$ and $f(x)$ is an orthomorphism polynomial of \fq\ then the reduction of $f$ modulo $x^q-x$ has degree at most $q-3$.
\end{theorem}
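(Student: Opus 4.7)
The plan is to apply Hermite's criterion (\tref{thm:hermite}) to both $f(x)$ and $g(x) = f(x) - x$, which are both \PP s of \fq\ by hypothesis. By \cref{cor:max_degree} both $f$ and $g$ reduce modulo $x^q - x$ to polynomials of degree at most $q-2$. Writing the reduction of $f$ as $f(x) = \sum_{i=0}^{q-2} a_i x^i$, the corresponding reduction of $g$ has exactly the same coefficient $a_{q-2}$ of $x^{q-2}$ (assuming $q \geq 4$, subtracting $x$ only perturbs the coefficient of $x$). The goal is to show $a_{q-2} = 0$.

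The key observation is that Hermite's criterion forces the coefficient of $x^{q-1}$ in the reduction of $f(x)^t$ modulo $x^q - x$, and likewise for $g(x)^t$, to vanish for each $1 \leq t \leq q - 2$. Subtracting yields: the coefficient of $x^{q-1}$ in $f(x)^t - (f(x) - x)^t$ modulo $x^q - x$ is zero. Each such identity is a polynomial relation in the coefficients $a_i$, and the strategy is to pick $t$ so that one of these relations isolates $a_{q-2}$.

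For $p$ odd I would take $t = 2$. Expanding gives $f(x)^2 - (f(x) - x)^2 = 2 x f(x) - x^2$. Writing $2 x f(x) = 2 \sum_i a_i x^{i+1}$, the only exponent congruent to $q - 1$ after reduction comes from $i = q - 2$ (the exponents $i + 1$ lie in $[1, q-1]$, so no wraparound can occur), contributing $2 a_{q-2}$. The coefficient of $x^{q-1}$ in $x^2$ is zero provided $q > 3$. Hence $2 a_{q-2} = 0$, so $a_{q-2} = 0$, completing the proof for odd $p$ (and $q > 3$, the small case $q = 3$ being checked directly).

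The characteristic $2$ case is the main obstacle. In that setting $(f - x)^2 = f^2 + x^2$, so the $t = 2$ subtraction collapses to the trivial identity $0 = 0$ and yields no information. One must instead use odd $t$. For $t = 3$ the analogous computation in characteristic $2$ gives a relation of the form $a_{(q-2)/2}^2 + a_{q-3} = 0$, a constraint on $a_{q-3}$ and a lower-indexed coefficient, rather than isolating $a_{q-2}$. Assembling the constraints from several odd values of $t$, and combining them with the stronger Hermite condition that $f(x)^{q-1}$ reduces to a monic polynomial of degree $q-1$, one can extract a genuinely new identity involving $a_{q-2}$. Tracking which multinomial coefficients survive reduction modulo $2$ via \tref{thm:lucas} is the principal technical difficulty.
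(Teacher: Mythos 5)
The paper does not actually prove this theorem: it attributes the odd-$q$ case to Niederreiter and Robinson, the even-$q$ case to Wan, and refers the reader elsewhere for the details, so there is no in-paper proof to compare against. Your odd-characteristic argument is correct and is essentially the Niederreiter--Robinson argument: for $p$ odd and $q>3$ the exponent $t=2$ is admissible in \tref{thm:hermite}, the difference $f(x)^2-(f(x)-x)^2=2xf(x)-x^2$ has $x^{q-1}$-coefficient $2a_{q-2}$ after reduction modulo $x^q-x$ (no exponent occurring in $2xf(x)$ exceeds $q-1$, and $x^2$ reduces to $x^{q-1}$ only when $q=3$), and applying Hermite's criterion to both permutation polynomials $f$ and $f-x$ forces $2a_{q-2}=0$. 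That part stands.

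The genuine gap is the characteristic-$2$ case, which you correctly identify as the obstacle but do not resolve. Asserting that "one can extract a genuinely new identity involving $a_{q-2}$" by assembling the odd-$t$ relations with the $t=q-1$ condition is a plan, not a proof: you exhibit no such identity, and the one relation you do compute, $a_{(q-2)/2}^2+a_{q-3}=0$ from $t=3$, does not involve $a_{q-2}$ at all. This is exactly the case that resisted the Hermite-type attack and required a separate argument by Wan in 1986, four years after the odd case was settled, so the burden is on you to carry the computation through (or supply a different idea) before the theorem can be claimed for even $q$. A smaller but real problem is $q=3$: you say this case "can be checked directly," but the direct check refutes the statement as printed, since $f(x)=2x$ is an orthomorphism polynomial of $\mathbb{F}_3$ (both $2x$ and $2x-x=x$ permute $\mathbb{F}_3$, as the paper's own remark on $2x$ notes) and has reduced degree $1>q-3=0$. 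The correct hypothesis is $q>3$; your write-up should say so rather than assert a base case that is false.
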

The above theorem was proved by Niederreiter and Robinson \cite{nieder1982} for odd $q$, and by Wan \cite{wan1986} for even $q$. We refer the reader to \cite{evans1992} for its complete proof.

The following trivial lemma gives us a concept analogous to normalised a permutation polynomials.
\begin{lemma}\label{lem:normalised_ortho}
If $f \in \mfqx$ is an orthomorphism polynomial of \fq\ then so is
\[ g(x)=f(x+b)+d, \text{ where } b,d \in \mfq. \]
\end{lemma}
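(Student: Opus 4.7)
The plan is to verify directly the two defining conditions for an orthomorphism polynomial applied to $g$: namely that $g(x)$ is a \PP\ of \fq\ and that $g(x)-x$ is a \PP\ of \fq. Since $f$ is assumed to be an orthomorphism polynomial, we already have both facts for $f$, so the job reduces to checking that the particular transformation $g(x)=f(x+b)+d$ preserves each property.

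First I would handle $g(x)$ itself. The polynomial $x+b$ is a \PP\ of \fq\ by \tref{thm:elementary}, and the set of \PP s is closed under composition (as noted in the paragraph preceding \dref{def:normalised}); therefore $f(x+b)$ is a \PP. Adding the constant $d$ is likewise a bijection of \fq, so $g(x)=f(x+b)+d$ is a \PP\ of \fq.

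Next I would treat $g(x)-x$. The key observation is the algebraic rewriting
\[
g(x)-x \;=\; f(x+b)+d-x \;=\; \bigl(f(x+b)-(x+b)\bigr)+(b+d).
\]
Setting $h(y)=f(y)-y$, which is a \PP\ of \fq\ by hypothesis, this says $g(x)-x=h(x+b)+(b+d)$. By the same composition-with-a-linear-polynomial argument as above, $h(x+b)+(b+d)$ is a \PP\ of \fq. Hence both $g(x)$ and $g(x)-x$ are \PP s, so $g$ is an orthomorphism polynomial, completing the proof. There is no real obstacle here; the lemma is essentially a direct consequence of closure of the set of \PP s under translation on both the input and the output side, together with the clean rearrangement displayed above.
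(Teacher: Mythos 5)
Your proof is correct, and since the paper states this lemma as ``trivial'' and omits any proof, your direct verification --- composing with the linear \PP\ $x+b$ and the translation by $d$, together with the rewriting $g(x)-x=\bigl(f(x+b)-(x+b)\bigr)+(b+d)$ --- is exactly the intended argument. Nothing is missing.
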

By suitable choices of $b$ and $d$ we can ensure that the resulting polynomial $g(x)$ satisfies $g(0)=0$, and when the degree $n$ of $f$ is not divisible by the characteristic of \fq, the coefficient of $x^{n-1}$ is zero. 

\begin{remark}
Unlike with permutation polynomials, it is \emph{not} true that $c f(x)$ is also an orthomorphism polynomial for any $c \neq 0$. For example, $2x$ is always an orthomorphism polynomial of \fq\ but $x$ is not. 
\end{remark}

\section{Degree 6 Orthomorphism Polynomials}
Orthomorphism polynomials of degree up to 5 were classified by Niederreiter and Robinson \cite{nieder1982} in 1982. In fact they actually classified complete mapping polynomials, but it is then a simple matter to determine the orthomorphism polynomials since $f(x)$ is a complete mapping polynomial if and only if $f(x)+x$ is an orthomorphism polynomial. In the same paper the authors also resolved the degree 6 case when $\gcd(6,q)=1$. The reader may find the list of orthomorphism polynomials from these cases in \cite{evans1992}. Using the characterisation of degree 6 \PP s from \chref{chap:deg6} we now proceed to classify all degree 6 orthomorphism polynomials of fields of characteristic 3.

Let $f(x)$ be an orthomorphism polynomial of $\mathbb{F}_{3^r}$, where $r \geq 2$. Then by \lref{lem:normalised_ortho} the polynomial $g(x)=f(x+b)+d$ is also an orthomorphism of $\mathbb{F}_{3^r}$. By choosing $b$ and $d$ suitably we can ensure that $g(0)=0$ and if the coefficient of $x^5$ is nonzero then the coefficient of $x^4$ is zero. If we can classify orthomorphisms with these properties then we have a complete classification up to linear transformations. 

\noindent Let $f \in \mathbb{F}_{3^r}[x]$ be a degree 6 polynomial and define the following properties:
\begin{description} [leftmargin=15mm,labelindent=5mm,style=sameline,itemsep=1pt,font=\normalfont]
\item[(\emph{P1})] $f(0)=0$.
\item[(\emph{P2})] The coefficient of $x^5$ is zero.
\item[(\emph{P3})] The coefficient of $x^5$ is nonzero and the coefficient of $x^4$ is zero.
\end{description}
Also, recall that in this project the symbol $2^{1/2}$ always represents \emph{either} root of the equation $x^2+1=0$ in $\mathbb{F}_{3^2}$. It will henceforth be necessary to distinguish between the two solutions, so we let $2^{1/2} \in \{ \pm i \}$, where $i$ is a solution fixed throughout. 

We first classify the orthomorphism polynomials of $\mathbb{F}_{3^2}$ satisfying (\emph{P1}) and (\emph{P2}).
\begin{theorem}\label{thm:orthos_F9_a5zero}
The complete list of degree 6 orthomorphism polynomials $f(x)$ of $\mathbb{F}_{3^2}$ satisfying (P1) and (P2) is given by 
\begin{gather*}
c x^6+c^7 x^4 + c^5 x^2 + 2x \quad (c \neq 0), \\
c x^6+c^7 x^4 - 2^{1/2} c^2 x^3+ c^5 x^2 + (2 + 2^{1/2})x \quad (c \neq 0).
\end{gather*}
\end{theorem}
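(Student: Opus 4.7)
The plan is to reduce the problem to Theorem~\ref{thm:f9a5zero} by exploiting the defining property that $f(x)$ is an orthomorphism polynomial if and only if both $f(x)$ and $f(x)-x$ are permutation polynomials of $\mathbb{F}_{3^2}$. Conditions (\emph{P1}) and (\emph{P2}) force the most general shape $f(x)=c x^{6}+a_{4}x^{4}+a_{3}x^{3}+a_{2}x^{2}+a_{1}x$ with $c\neq 0$. Since scaling by a nonzero constant preserves the permutation property, I will apply Theorem~\ref{thm:f9a5zero} to the monic polynomial $c^{-1}f(x)$, which must therefore admit a parameterization
\[
c^{-1}f(x)=x^{6}+a^{2}x^{4}+a^{7}b\,x^{3}+a^{4}x^{2}+a(2b+1)x
\]
for some $a\in\mathbb{F}_{3^{2}}^{\times}$ and some $b\in\{0,1,\pm i,1\pm i\}$ (where $i$ is a fixed square root of $2$). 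Applying the same theorem to $c^{-1}(f(x)-x)$ yields a second parameterization in some $(a',b')$ from the same set.

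The crux is to match these two parameterizations. Since $f(x)$ and $f(x)-x$ share the coefficients of $x^{4}, x^{3}, x^{2}$, I obtain $a'^{2}=a^{2}$, $a'^{4}=a^{4}$, and $a'^{7}b'=a^{7}b$, while the linear coefficients satisfy $a'(2b'+1)=a(2b+1)-c^{-1}$. From $a'^{2}=a^{2}$ we have $a'=\pm a$. The choice $a'=a$ forces $b'=b$ and then $c^{-1}=0$, which is impossible; hence $a'=-a$ and consequently $b'=-b$. The central observation is that this forces $-b$ to also lie in the allowed six-element set, a requirement that (by direct inspection) is satisfied only for $b\in\{0,i,-i\}$. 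For each such $b$, the linear-coefficient equation simplifies to $c^{-1}=2a$, i.e.\ $a=2c^{-1}$, giving a unique one-parameter family indexed by $c\in\mathbb{F}_{3^{2}}^{\times}$.

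The final step is to substitute $a=2c^{-1}$ back into the parameterization for each admissible $b$, simplifying using $a^{2}=c^{-2}=c^{6}$, $a^{4}=c^{4}$, and $a^{7}=a^{-1}=-c$ in $\mathbb{F}_{3^{2}}^{\times}$. The case $b=0$ will produce the first listed family $c x^{6}+c^{7}x^{4}+c^{5}x^{2}+2x$. The two cases $b=\pm i$ will produce polynomials whose $x^{3}$-coefficient is $\mp c^{2}i$ and whose linear coefficient is $2\pm i$; these collapse neatly into the single family $c x^{6}+c^{7}x^{4}-2^{1/2}c^{2}x^{3}+c^{5}x^{2}+(2+2^{1/2})x$ via the convention that $2^{1/2}$ denotes either root, with consistent choice inside each polynomial.

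Sufficiency is immediate from the construction: by design $f(x)$ satisfies the hypotheses of Theorem~\ref{thm:f9a5zero} with parameters $(a,b)$ and $f(x)-x$ satisfies the same hypotheses with parameters $(-a,-b)$, so both are permutation polynomials of $\mathbb{F}_{3^{2}}$. The main obstacle I anticipate is the bookkeeping in the two-layered application of Theorem~\ref{thm:f9a5zero}: tracking the exponent identities in $\mathbb{F}_{3^{2}}^{\times}$ correctly, verifying that exactly three values of $b$ survive the constraint that $-b$ lies in the allowed set, and confirming that the resulting three parametric solutions condense into precisely the two families in the theorem statement through the $2^{1/2}$ convention.
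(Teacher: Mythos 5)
Your proposal is correct and follows essentially the same route as the paper: both arguments use the classification of Theorem~\ref{thm:f9a5zero} (equivalently, Table~\ref{tab:deg6_extrareduced}~(5) scaled by $c$), match the coefficients of $f(x)$ and $f(x)-x$ to force the parameter change $(a,b)\mapsto(-a,-b)$, deduce $b\in\{0,\pm 2^{1/2}\}$ from closure of the parameter set under negation, and extract $a=2c^{-1}=-c^{-1}$ from the linear coefficient. The resulting simplifications ($a^{2}c=c^{7}$, $a^{4}c=c^{5}$, $a^{7}bc=-bc^{2}$, $ac(2b+1)=b+2$) agree exactly with the paper's computation.
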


\begin{proof}
Recall that Table \ref{tab:deg6_extrareduced} (5) is the complete classification of monic \PP s of $\mathbb{F}_{3^2}$ satisfying (\emph{P1}) and (\emph{P2}). Multiplying this family by an arbitrary constant $c\neq0$ (to remove the monoticity restriction), we recover the complete list of \PP s satisfying (\emph{P1}) and (\emph{P2}):
\begin{equation}\label{eq:class_a5zero_F9}
\begin{gathered}
cx^6+a^2cx^4+a^7bcx^3+a^4cx^2+ac(2b+1)x,\\
a,c \neq 0, b\in \{0,1,\pm i,1 \pm i\}.
\end{gathered}
\end{equation}
If $f(x)$ is an orthomorphism polynomial of $\mathbb{F}_{3^2}$ then it is necessarily of the above form; that is,
\begin{equation}\label{eq:f(x)_a5zero_F9}
f(x)=cx^6+a^2cx^4+a^7bcx^3+a^4cx^2+ac(2b+1)x
\end{equation}
for some $a,c \neq 0, b\in \{0,1,\pm i,1 \pm i\}$. Hence,
\begin{equation}\label{eq:f(x)-x_a5zero_F9}
f(x)-x=cx^6+a^2cx^4+a^7bcx^3+a^4cx^2+(ac(2b+1)-1)x.
\end{equation}
Note that this is again a polynomial satisfying (\emph{P1}) and (\emph{P2}), so $f$ is an orthomorphism polynomial if and only if $f(x)-x$ is of the form \eref{eq:class_a5zero_F9}. That is, there must exist $A,C \neq 0, B\in \{0,1,\pm i,1 \pm i\}$ such that 
\begin{equation}\label{eq:f(x)-x_general_a5zero_F9}
f(x)-x = Cx^6+A^2C x^4+A^7BCx^3+A^4Cx^2+AC(2B+1)x.
\end{equation}
We proceed to equate the coefficients of \eref{eq:f(x)-x_a5zero_F9} and \eref{eq:f(x)-x_general_a5zero_F9}. Since clearly it is impossible that $A=a,B=b,C=c$, we have from the coefficients of $x^6,x^4,x^3$ and $x^2$ that 
\[ A=-a,B=-b,C=c. \] 
Hence $b \in \{ 0, \pm i \}$, the largest subset of $\{0,1,\pm i,1 \pm i\}$ closed under negation. The coefficient of $x$ then requires
\[ ac(2b+1)-1=-ac(-2b+1).\]
Rearranging, we find that $a=-c^{-1}$, and substituting this and $b \in \{0,\pm i\}$ into \eref{eq:f(x)_a5zero_F9} and simplifying we have precisely the following orthomorphism polynomials:
\[
c x^6+c^7 x^4 - b c^2 x^3+ c^5 x^2 + (b+2)x \quad (c \neq 0, b \in \{ 0, \pm i \}).
\]
\end{proof}

Now we consider the case where (\emph{P1}) and (\emph{P3}) are satisfied.
\begin{theorem}
The complete list of degree 6 orthomorphism polynomials $f(x)$ of $\mathbb{F}_{3^2}$ satisfying (P1) and (P3) is given by
\[ a^5 x^6+ 2^{1/2} a^4 x^5+ 2^{1/2} a^2 x^3+a x^2 + 2(1+2^{1/2}) x \quad (a \neq 0).\]
\end{theorem}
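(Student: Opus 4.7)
The plan is to mirror the method of Theorem \ref{thm:orthos_F9_a5zero}. Multiplying each of the three families (6), (7), (8) of Table \ref{tab:deg6_extrareduced} by an arbitrary nonzero scalar $c$ yields the complete list of permutation polynomials of $\mathbb{F}_{3^2}$ of degree 6 satisfying (\emph{P1}) and (\emph{P3}). Since subtracting $x$ preserves both (\emph{P1}) and (\emph{P3}), if $f(x)$ is such an orthomorphism polynomial then both $f(x)$ and $f(x)-x$ must lie in this list; moreover, the leading coefficient $c$ is unchanged by the subtraction, so the scalar is the same for both.

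The first observation is that the ratio of the $x^5$-coefficient to the $x^6$-coefficient equals the internal parameter $a$ in each family, so this parameter is shared by $f(x)$ and $f(x)-x$. Next I would examine the nine possible pairs (family of $f$, family of $f(x)-x$). The ratio of the $x^3$-coefficient to $a^3c$ is $1$, $\varphi=\pm(1-2^{1/2})$, and $2$ in families (6), (7), and (8) respectively; since $(2^{1/2})^2=-1$ in $\mathbb{F}_{3^2}$, the four possible values of $\varphi$ lie outside $\{1,2\}$, so every cross-family pair is eliminated immediately.

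Three same-family pairs remain. In (6)-(6) and (8)-(8) with matching choice of the square-root symbol, comparing the $x$-coefficient gives $-1=0$, a contradiction in characteristic $3$. In (7)-(7), matching the $x^3$-coefficient forces the $\varphi$-values to coincide, but if the two occurrences of $2^{1/2}$ in $f$ and $f(x)-x$ are taken to be the same root then the $x$-coefficient match yields $0=1$, and if they are opposite roots then $\pm(1-\tau)$ and $\pm(1+\tau)$ are disjoint subsets of $\mathbb{F}_{3^2}$ so the $\varphi$-values cannot coincide. This leaves only (8)-(8) with $2^{1/2}$ taken as $\tau$ in $f$ and as $-\tau$ in $f(x)-x$; the $x$-coefficient match then gives $-2\tau a^5 c = -1$, hence $c=\tau/a^5$. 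Substituting this back into family (8) and then reparametrizing by $a\mapsto 2^{1/2}/a$ yields precisely the family asserted in the theorem.

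The main obstacle is the double ambiguity in family (7)---the sign in $\varphi$ and the choice of root $2^{1/2}$---which must be treated as independent parameters when ruling out the (7)-(7) case. Once one notices that the sets $\{\pm(1-i)\}$ and $\{\pm(1+i)\}$ are disjoint in $\mathbb{F}_{3^2}$, the argument collapses cleanly. The remaining computations are routine matchings of coefficients.
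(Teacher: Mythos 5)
Your proposal is correct and follows essentially the same route as the paper: scale the three relevant families of Table \ref{tab:deg6_extrareduced} by $c\neq 0$ to get all \PP s satisfying (\emph{P1}) and (\emph{P3}), observe that $f(x)-x$ preserves these properties together with the coefficients of $x^6,\dots,x^2$, eliminate all pairings except family (8) against itself with opposite choices of $2^{1/2}$, solve the $x$-coefficient condition for $c$, and reparametrise $a\mapsto 2^{1/2}/a$. The paper merely writes the two root choices out as separate listed families rather than as a parameter, and your coefficient computations (including $c=\tau/a^5=\tau a^3$) agree with its conclusion.
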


\begin{proof}
From Table \ref{tab:deg6_extrareduced} (6-8) the complete list of \PP s of $\mathbb{F}_{3^2}$ satisfying (\emph{P1}) and (\emph{P3}) is given by
\begin{gather}
cx^6+acx^5+a^3cx^3+2a^4cx^2+2a^5cx, \quad(a,c \neq 0) \label{class_a4zero_1}\\ 
cx^6+acx^5+\varphi a^3cx^3+2\varphi a^4cx^2 + i a^5cx, \quad(a,c\neq 0,\varphi = \pm(1-i)) \label{class_a4zero_2} \\
cx^6+acx^5+\varphi a^3cx^3+2\varphi a^4cx^2 - i a^5cx, \quad(a,c\neq 0,\varphi = \pm(1+i)) \label{class_a4zero_3} \\
cx^6+acx^5+2a^3cx^3+a^4cx^2+(2 + i)a^5cx, \quad(a,c \neq 0) \label{class_a4zero_4} \\
cx^6+acx^5+2a^3cx^3+a^4cx^2+(2 - i)a^5cx. \quad(a,c \neq 0) \label{class_a4zero_5}
\end{gather}
As before, note that if $f(x)$ is a polynomial satisfying (\emph{P1}) and (\emph{P3}) then $f(x)-x$ also satisfies (\emph{P1}) and (\emph{P3}). Hence, $f$ is an orthomorphism polynomial if and only if $f(x)$ and $f(x)-x$ both appear in the complete list of \PP s above. 

It is not difficult to see that if $f(x)$ is of the form \eref{class_a4zero_1} then it is not possible for $f(x)-x$ to be of any of the forms \eref{class_a4zero_1}-\eref{class_a4zero_5}. So there are no orthomorphism polynomials of the form \eref{class_a4zero_1}. Similar reasoning shows that the only possibility for $f(x)$ and $f(x)-x$ to both be on the list is if one is of the form \eref{class_a4zero_4} and the other is of the form \eref{class_a4zero_5}. First consider the case where $f(x)$ is of the form \eref{class_a4zero_4}. Let
\begin{equation}\label{eq:f(x)_a4zero_F9}
f(x)=cx^6+acx^5+2a^3cx^3+a^4cx^2+(2 + i)a^5cx,
\end{equation}
where $a,c\neq 0$. Then 
\begin{equation}\label{eq:f(x)-x_a4zero_F9}
f(x)-x=cx^6+acx^5+2a^3cx^3+a^4cx^2+((2 + i)a^5c-1)x.
\end{equation}
For $f(x)-x$ to be of the form \eref{class_a4zero_5} we there must exist $A,C\neq 0$ such that 
\begin{equation}\label{eq:f(x)-x_general_a4zero_F9}
f(x)-x=Cx^6+ACx^5+2A^3Cx^3+A^4Cx^2+(2 - i)A^5Cx.
\end{equation}
Equating coefficients of \eref{eq:f(x)-x_a4zero_F9} and \eref{eq:f(x)-x_general_a4zero_F9} we conclude that $C=c,A=a$ and that 
\[ (2 + i)a^5c-1 =(2 - i)a^5c. \]
Rearranging, we have $c=a^3 i$. Substituting this into \eref{eq:f(x)_a4zero_F9} we have
\[ f(x)=a^3 i x^6+a^4 i x^5 + 2a^6 i x^3 + a^7 i x^2 + 2(1+i) x\]
Replacing $a$ with $a^{-1}i$ we have
\[ f(x)=a^5 x^6+a^4 i x^5+a^2 i x^3+a x^2 + 2(1+i) x.\]
In a similar fashion we determine that $f(x)$ is of the form \eref{class_a4zero_5} and $f(x)-x$ is of the form \eref{class_a4zero_4} if and only if
\[ f(x)=a^5 x^6-a^4 i x^5-a^2 i x^3+a x^2 + 2(1-i) x.\]
\end{proof}

Finally we show that there are no degree 6 orthomorphism polynomials of $\mathbb{F}_{3^r}$ for any $r>2$. 
\begin{theorem}\label{thm:no_orthos_3r}
There are no degree 6 orthomorphism polynomials of $\mathbb{F}_{3^r}$ for any $r>2$. 
\end{theorem}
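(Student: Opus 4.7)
The plan is to leverage the classification of degree 6 \PP s of $\mathbb{F}_{3^r}$ already developed in \chref{chap:deg6} and to observe that the resulting family is too rigid to admit an orthomorphism. I split the argument into two cases.

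For $r > 3$, \tref{thm:a5not0} rules out the subcase $a_5=0$ and \tref{thm:nopps3k} rules out $a_5 \neq 0$, so there are no degree 6 \PP s of $\mathbb{F}_{3^r}$ at all, and \emph{a fortiori} no orthomorphism polynomials.

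The substantive case is $r=3$. By \tref{thm:a5not0} combined with Table \ref{tab:deg6_extrareduced} (entry (9)), every degree 6 \PP\ of $\mathbb{F}_{3^3}$ is a linear transformation of $g_a(x) := x^6 + a x^5 + 2 a^4 x^2$; equivalently, it has the form $f(x) = c\, g_a(x+b) + d$ for some $a, c \in \mathbb{F}_{3^3}^{\times}$ and $b, d \in \mathbb{F}_{3^3}$. Expanding in characteristic 3 (using $(x+b)^3 = x^3 + b^3$ and $(x+b)^6 = x^6 + 2 b^3 x^3 + b^6$), I would read off the coefficients of $x^6, x^5, x^4$ in $f(x)$ as $c,\, ac,\, 2abc$ respectively; in particular the $x^5$ coefficient $ac$ is always nonzero.

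Now suppose $f$ were an orthomorphism polynomial; then $f(x)-x$ must also be a degree 6 \PP\ of $\mathbb{F}_{3^3}$, so by the same classification $f(x) - x = C\, g_A(x+B) + D$ for some $A,C \in \mathbb{F}_{3^3}^{\times}$ and $B, D \in \mathbb{F}_{3^3}$. Since $f(x)-x$ and $f(x)$ agree in every coefficient except the linear one, matching the $x^6$, $x^5$, $x^4$ coefficients on both sides forces $C = c$, then $A = a$, then $B = b$. But this determines \emph{all} remaining non-constant coefficients of $f(x)-x$ to equal those of $f(x)$, contradicting the fact that the two polynomials have $x$-coefficients differing by $1$.

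The main obstacle is really just recognising that the heavy lifting has already been done in \chref{chap:deg6}; once one notices the rigidity of the one-parameter family (its $x^5$ coefficient is always nonzero, and the lower coefficients are then pinned down by $a,b,c$), the coefficient comparison is short and no new Hermite-style computation is needed.
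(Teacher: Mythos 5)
Your proof is correct and follows essentially the same route as the paper: the case $r>3$ is dispatched identically via Theorems \ref{thm:a5not0} and \ref{thm:nopps3k}, and for $r=3$ both arguments exploit the rigidity of the single family in Table \ref{tab:deg6_extrareduced}\,(9) to show $f$ and $f(x)-x$ cannot both be permutation polynomials. The only cosmetic difference is that the paper first invokes Lemma \ref{lem:normalised_ortho} to normalise $f$ so that the contradiction is immediate from the absence of a linear term in the form \eref{eq:f_F27}, whereas you compare coefficients of the un-normalised forms $c\,g_a(x+b)+d$ directly.
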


\begin{proof}
By Theorems \ref{thm:a5not0} and \ref{thm:nopps3k} there are no \PP s of $\mathbb{F}_{3^r}$ for any $r>3$, so certainly there are no orthomorphism polynomials. 

Let $r=3$ and let $f(x)$ be a degree 6 orthomorphism polynomial of $\mathbb{F}_{3^3}$. By a linear transformation we may assume that $f$ satisfies (\emph{P1}) and either (\emph{P2}) or (\emph{P3}). By Table \ref{tab:deg6_extrareduced} (9) we then have
\begin{equation}\label{eq:f_F27}
 f(x)= cx^6+acx^5+2a^4cx^2,
\end{equation}
for some $a,c \neq 0$. But clearly $f(x)-x$ cannot also be of the form \eref{eq:f_F27}, so $f(x)-x$ is not a \PP, a contradiction.
\end{proof}

By Theorems \ref{thm:orthos_F9_a5zero} - \ref{thm:no_orthos_3r} there are degree 6 orthomorphism polynomials of $\mathbb{F}_{3^r}$ if and only if $r=2$. The following is the classification of degree 6 orthomorphism polynomials of $\mathbb{F}_{3^2}$:
\begin{gather}
 a x^6+a^7 x^4 + a^5 x^2 +2 x, \quad (a \neq 0) \label{eq:first_ortho_F9} \\
 a x^6+a^7 x^4 - 2^{1/2} a^2 x^3+ a^5 x^2 + (2 + 2^{1/2})x, \quad (a \neq 0) \label{eq:second_ortho_F9} \\
 a^5 x^6+ 2^{1/2} a^4 x^5+ 2^{1/2} a^2 x^3+a x^2 + 2(1+2^{1/2}) x. \quad (a \neq 0) \label{eq:third_ortho_F9}
\end{gather}
Every degree 6 orthomorphism polynomial of $\mathbb{F}_{3^2}$ is of one of the forms
\begin{itemize}
 \item $f(x)+d$, where $f(x)$ is of the form \eref{eq:first_ortho_F9} or \eref{eq:second_ortho_F9} and $d \in \mathbb{F}_{3^2}$, or
 \item $g(x+b)+d$, where $g(x)$ is of the form \eref{eq:third_ortho_F9} and $b,d \in \mathbb{F}_{3^2}$.
\end{itemize}

\appendix
\chapter{List of Normalised PPs}\label{appendix:list}
With the exception of degree 6 polynomials in even characteristic, the following table is the complete list of normalised permutation polynomials of degree $\leq 6$. The reader is referred to the recent paper \cite{li2010} by Li \emph{et al.} for the classification of \PP s of degree 6 and 7 over fields of even characteristic. 
\begin{table}[h!]
\begin{tabularx}{\textwidth}{ | X| l | }
\hline
Normalised \PP & $q$ \\
\hline
$x$ & any $q$ \\
\hline
$x^2$ & $q \equiv 0 \mod 2$ \\
\hline
$x^3$ & $q \not \equiv 1 \mod3$ \\
$x^3-ax$, $a$ not square & $q \equiv 0 \mod3$ \\
\hline
$x^4 \pm 3x$ & $q =7$ \\
$x^4+a_1x^2+a_2x$, if its only root in \fq\ is 0 & $q \equiv 0 \mod2$ \\
\hline
$x^5$ & $q \not\equiv 1 \mod5$ \\
$x^5-ax$, $a$ not a fourth power & $q \equiv 0 \mod5$ \\
$x^5 + 2^{1/2} x$ & $q=9$ \\
$x^5 \pm 2x^2$ & $q=7$ \\
$x^5+ax^3 \pm x^2 +3a^2 x$, $a$ not a square & $q=7$ \\
$x^5 + ax^3 + 5^{-1} a^2x$, $a$ arbitrary & $q \equiv 2,3 \mod5$ \\
$x^5 + ax^3 + 3a^2x$, $a$ not square & $q=13$ \\
$x^5 - 2ax^3 +a^2x$, $a$ not square & $q \equiv 0 \mod5$ \\
\hline
$x^6 \pm 2x$ &$q=11$\\
$x^6 \pm 4x$ & $q=11$\\
$x^6 \pm a^2x^3+ax^2 \pm 5x$, $a$ a nonzero square & $q=11$\\
$x^6 \pm 4 a^2 x^3+ax^2 \pm 4x$, $a$ not square & $q=11$\\
$x^6+a^2x^4+a^7bx^3+a^4x^2+a(2b+1)x$, &$q=3^2$\\
\quad $a \neq 0, b\in \{0,1,2^{1/2},1+2^{1/2} \}$ & \\
$x^6 + a x^5 +2 a b x^4 + (a^3 + a b^2 + 2 b^3) x^3 + (2 a^4 + a b^3) x^2+$ &$q=3^2$ \\
\quad $(2 a^5 + a^4 b + 2 a b^4) x, a \neq 0$, $b$ arbitrary & \\
$ x^6+a x^5 +2 a b x^4 + (a b^2 + 2 b^3 + a^3 \varphi)x^3 + (a b^3 + 2a^4 \varphi)x^2 + $ &$q=3^2$\\
\quad $(2^{1/2} a^5 + 2 a b^4 + a^4 b \varphi)x, a\neq 0$, $b$ arbitrary, &\\
\quad  $\varphi= \pm (1-2^{1/2})$ & \\
$x^6 + a x^5 + 2 a b x^4 + (2 a^3 + a b^2 + 2 b^3) x^3 + (a^4 + a b^3) x^2 +$ &$q=3^2$\\
\quad $(2 a^5 + 2^{1/2} a^5 + 2 a^4 b + 2 a b^4) x, a \neq 0$, $b$ arbitrary & \\
$x^6+  a x^5 + 2 a b x^4 +(a b^2 + 2 b^3) x^3 + (2 a^4 + a b^3) x^2 + $ &$q=3^3$\\
\quad $(a^4 b + 2 a b^4) x, a \neq 0$, $b$ arbitrary & \\
\hline
\end{tabularx}
\caption{\emph{List of Normalised Permutation Polynomials.}}
\label{table:normalised}
\vspace{2mm}
Note that in this table $2^{1/2}$ always occurs as a symbol for \emph{either} root of the polynomial $x^2-2$ in $\mathbb{F}_{3^2}$.
\end{table}

\bibliographystyle{plain}	
\bibliography{permutationbib}
\end{document}